\pgfplotsset{compat = newest}
\definecolor{trueblue}{rgb}{0.0, 0.45, 0.81}
\definecolor{forestgreen}{rgb}{0.13, 0.55, 0.13}
\newcommand{\NNN}{\color{black}} 
\newcommand{\MMM}{\color{black}} 
\newcommand{\BBB}{\color{black}} 
\newcommand{\GGG}{\color{black}} 
\newcommand{\EEE}{\color{black}} 
\newcommand{\OOO}{\color{black}} 
\newcommand{\CCC}{\color{black}} 
\newcommand{\AAA}{\color{black}}
\newcommand{\eps}{\varepsilon}
\theoremstyle{plain}
\newtheorem{theorem}{Theorem}[section]
\newtheorem{lemma}[theorem]{Lemma}
\newtheorem{remark}[theorem]{Remark}
\newtheorem{proposition}[theorem]{Proposition}
\newtheorem{corollary}[theorem]{Corollary}
\newenvironment{step}[1]{\underline{Step #1}.}{}
\theoremstyle{definition}
\newtheorem{definition}[theorem]{Definition}
\renewcommand{\tilde}{\widetilde}
\DeclareMathOperator{\dist}{dist}
\numberwithin{equation}{section}
\newcommand{\N}{\mathbb{N}}
\newcommand{\Z}{\mathbb{Z}}
\newcommand{\F}{\mathcal{F}}
\newcommand{\E}{\mathcal{E}}
\newcommand{\R}{\mathbb{R}}
\newcommand{\I}{\mathcal{I}}
\renewcommand{\S}{\mathbb{S}}
\renewcommand{\L}{\mathcal{L}}
\renewcommand{\H}{\mathcal{H}}
\newcommand{\x}{{\times}}
\newcommand{\defas}{:=}
\newcommand{\Q}{\mathcal{Q}}
\begin{document}
	
\title[Derivation of Kirchhoff-type plate theories for elastic materials with voids]{Derivation of Kirchhoff-type plate theories for elastic materials with voids}
	
\author[Manuel Friedrich]{Manuel Friedrich} 
\address[Manuel Friedrich]{Department of Mathematics, FAU Erlangen-N\"urnberg. Cauerstr.~11,
D-91058 Erlangen, Germany}
\email{manuel.friedrich@fau.de}

\author{Leonard Kreutz}
\address[Leonard Kreutz]{Department of Mathematics, School of Computation, Information and Technology, 
Technical University of Munich,
Boltzmannstr. 3, 85748 Garching, Germany}
\email{leonard.kreutz@tum.de}

\author{Konstantinos Zemas}
\address[Konstantinos Zemas]{Institute for Applied Mathematics, University of Bonn\\
Endenicher Allee 60, 53115 Bonn, Germany}
\email{zemas@iam.uni-bonn.de}

\begin{abstract}
We rigorously derive a Blake-Zisserman-Kirchhoff theory for thin plates with material voids, starting from a three-dimensional model with elastic bulk and interfacial energy  featuring a {Willmore-type curvature penalization}. The effective two-dimensional model comprises a classical elastic bending energy and surface terms which reflect the possibility that voids can persist in the limit, that the limiting plate can be broken apart into several pieces, or that the plate can be folded.  Building upon and extending the techniques used in  the authors' recent work \cite{KFZ:2023} on the derivation of one-dimensional theories for thin brittle rods with voids, the present contribution generalizes the results of \cite{SantiliSchmidt2022}, by considering general geometries on the admissible set of voids and constructing recovery sequences for all admissible limiting configurations.
\end{abstract}
	
\maketitle	
	
\section{Introduction}\label{introduction}
The rigorous derivation of variational theories for lower dimensional elastic objects, e.g.,  membranes, plates, shells, beams, and rods, has been one of the fundamental and challenging questions in the mathematical development of continuum mechanics. A common aspect in the analysis is always an appropriate model of three-dimensional nonlinear elasticity, and the key feature for the resulting theory is the energy scaling with respect to the \OOO thickness \EEE parameter of the initial domain. Despite the longstanding interest in such questions \cite{Antman1, Antman2}, early results were usually relying on some a priori \textit{ansatzes}, often leading to contrasting theories. Over the last decades though, modern techniques from the Calculus of Variations and Applied Analysis have been  implemented  very successfully for the rigorous derivation of effective models for thin elastic objects. From a technical point of view, the fundamental ingredient to perform these rigorous justifications has been the celebrated geometric rigidity estimate in the seminal work by {\sc G.~Friesecke, R.D.~James}, and {\sc S.~M\"uller}  \cite{friesecke2002theorem}, which has had a striking number of applications in dimension-reduction problems in the context of pure (hyper)elasticity. The interested reader is referred for instance to \OOO\cite{ContiDolzmann, DelgadoSchmidt, friesecke2002theorem, hierarchy, HornungNeukammVelcic, LewickaLucic, Mora4, Mora, MoraMullerSchultz,  schmidt_atomistic_dimension_reduction, schmidt_multilayers}\EEE, for a by far non-exhaustive list of references   regarding   dimension-reduction results related to plate or rod theories in the bending regime.  
	
However, concerning the investigation of phenomena   beyond the perfectly elastic regime, for instance the behavior of solids with defects and impurities  such as  \textit{plastic slips, dislocations}, \textit{cracks}, or \textit{stress-induced voids}, the mathematical understanding is far less well settled. In the case of thin elastic materials with voids, the natural variational formulation involves energies driven by the competition between bulk elastic and interfacial energies of perimeter type. Such variational models describe \textit{stress driven rearrangement instabilities} (SDRI) in elastic solids, and have recently been a focal point of considerable attention both from the mathematical and the \OOO physics \EEE community, see for example \cite{BonCha02, BraChaSol07, CrismaleFriedrich, FonFusLeoMil11, KFZ:2021, KFZ:2022, KFZ:2023, GaoNix99, Grin86, Grin93, KhoPio19,  KrePio19, SantiliSchmidt-old,  SieMikVoo04}. 
	
As far as \OOO dimension-reduction \EEE results beyond the purely elastic setting are concerned, we now present a concise summary of some important recent developments. In the framework of plasticity, we refer the reader, e.g., to  \cite{elisa2, elisa1, elisa3, liero-mielke, maggiani}. Regarding models for brittle  \MMM fracture, \EEE despite a significant recent progress on brittle plates and shells in the linear setting  \cite{Almi-etal, almitasso,  Baba-dim2, ginsglad}\EEE, the   theory in the nonlinear framework is mainly restricted to static and evolutionary models in the membrane regime \cite{solo-mem,  Baba-dim, Braides-fonseca}. Smaller energy regimes are less well studied, the only rigorous available result for now appearing to be for a two-dimensional thin brittle beam \cite{schmidt2017griffith}. The main result in that work is the derivation of an effective \textit{Griffith-Euler-Bernoulli} energy defined on the midline of the possibly fractured beam, which takes into account possible jump discontinuities of the limiting deformation and its derivative. From a technical perspective, the key tool in \cite{schmidt2017griffith} is an appropriate generalization of \cite{friesecke2002theorem}, namely a \textit{quantitative piecewise geometric rigidity theorem for SBD functions} \cite{friedrich_rigidity}. Up to now, such a general result is  available only in two dimensions, which is the main obstruction for the generalization of \OOO dimension-reduction \EEE results to settings of three-dimensional fracture.  Let us mention, however, that similar rigidity results in higher dimensions  are available in models for nonsimple materials \cite{friedrich_nonsimple}, where a singular perturbation term depending on the second gradient of the deformation is incorporated in the elastic energy.  
	
In the setting of SDRI models for solids with material voids, a first analysis  on plate \OOO theories \EEE with surface discontinuities in the bending (Kirchhoff) energy regime has been performed in \cite{SantiliSchmidt2022}. However, the results therein are conditional in two aspects. Firstly, only voids with restrictive assumptions on their distribution and geometry (satisfying the so-called \textit{minimal droplet assumption}) are considered,  which allows to resort to the classical rigidity theorem of \cite{friesecke2002theorem}. Secondly, recovery sequences are only constructed under a specific regularity property for the  outer Minkowski-content of voids and discontinuities. In our recent work \cite{KFZ:2023}, a related result for thin rods without  restriction on the void geometry was accomplished, generalizing the results of \cite{Mora} from the purely elastic setting. The cornerstone of our approach was a novel \OOO piecewise \EEE rigidity result in the realm of SDRI-models \cite{KFZ:2021}, which is based on a curvature regularization of the surface term. 
	
The goal of the present article is  to  extend the methods used in \cite{KFZ:2023} in order to show that   the \textit{Blake-Zisserman-Kirchhoff} model of \cite{SantiliSchmidt2022} is the $\Gamma$-limit of the three-dimensional model, without restricting the void geometries and without restricting to a special class of  configurations for the construction of recovery sequences.

We now describe our setting in more detail. We consider a three-dimensional thin plate with reference configuration 
\begin{equation*}
\Omega_h  := S\times (-\tfrac{h}{2},\tfrac{h}{2})\subset \R^3 
\end{equation*} 
of thickness $0<h\ll 1$, where the midsurface is represented by a bounded Lipschitz domain $S \subset \R^2$. Variational models for thin plates describing the formation of  material voids which are not a priori prescribed, fall into the framework of \textit{free discontinuity problems} \cite{Ambrosio-Fusco-Pallara:2000}, leading to an energy of the form
\begin{equation}\label{typical_energies_1}
\mathcal F^h_{\rm{el, per}}(v,E):=\int_{\Omega_h\setminus \overline{E}} W(\nabla v)\,\mathrm{d}x+\beta_h\int_{\partial E\cap \Omega_h}\varphi(\nu_E)\,\mathrm{d}\H^2\,.
\end{equation}
Here, $E \subset \Omega_h$ represents the (sufficiently regular) void set within an elastic plate with reference configuration $ \Omega_h \subset \mathbb{R}^3$, and $v$ is the corresponding elastic deformation. The first term in \eqref{typical_energies_1} represents the nonlinear elastic energy with density $W$ (see Section \ref{model_main_result} for details), whereas the second one depends on a parameter $\beta_h>0$ and a possibly anisotropic norm $\varphi$ evaluated at the outer unit normal $\nu_E$ to $ \partial E\cap\Omega_h$. For purely expository reasons, we restrict our analysis to the isotropic case, i.e., $\varphi(\cdot)\equiv|\cdot|_{2}$, see Remark \ref{choice_of_model} for some comments.
	
At a heuristic level, it is well known that an elastic energy scaling of the order $h^3$ corresponds to the \textit{bending Kirchhoff theory}, leaving the midsurface $S$ unstretched. At the same time, the surface area of voids completely separating the plate is of order $h$. Now, depending on the choice of the parameter $\beta_h$, one can expect different limiting models, after rescaling \eqref{typical_energies_1} with $\max \lbrace h^{-3}, (\beta_h h)^{-1}\rbrace$: the case $\beta_h \gg h^{2}$ will result in a purely elastic plate model, while the choice $\beta_h \ll h^{2}$ will lead to a model of purely brittle fracture.  The critical regime $\beta_h \sim h^{2}$ is the most interesting and mathematically most challenging one, since the elastic and surface contributions in this case compete at the same order.

Hence, from here on we set for simplicity $\beta_h:=h^{2}$. After rescaling the total energy in \eqref{typical_energies_1} by $h^{-3}$, our aim is to rigorously derive effective two-dimensional theories by means of $\Gamma$-convergence   \cite{Braides:02, DalMaso:93}. \MMM As \EEE in \cite{KFZ:2023}, the presence of a priori unprescribed voids in the model hinders the use of the classical rigidity result \cite{friesecke2002theorem}. Indeed, the distribution of voids in the material might possibly exhibit highly complicated geometries, for instance densely packed thin spikes or microscopically small components with small surface measure on different length scales,  see Figure~\ref{fig:spikes}.

\begin{figure}[htp]
\begin{tikzpicture}
			
\draw[clip] plot [smooth cycle] coordinates {(0,0)(2,0)(2,2)(0,2)};
			
\begin{scope}[shift={(1,1.75)},scale=.25]
\draw[fill=gray, domain=-.5:.5, smooth, variable=\x, xscale=.5,yscale=1] plot ({\x}, {40*\x*\x*\x*\x});
\end{scope}
			
\begin{scope}[shift={(.6,1.7)},scale=.2]
\draw[fill=gray, domain=-.8:.8, smooth, variable=\x, xscale=.5,yscale=1] plot ({\x}, {40*\x*\x*\x*\x});
				
\end{scope}
			
\begin{scope}[shift={(1.6,1.4)},scale=.1]
\draw[fill=gray, domain=-1.8:1.8, smooth, variable=\x, xscale=.5,yscale=1] plot ({\x}, {40*\x*\x*\x*\x});
\end{scope}
			
\begin{scope}[shift={(1.3,1.6)},scale=.1]
\draw[fill=gray, domain=-1.8:1.8, smooth, variable=\x, xscale=.5,yscale=1] plot ({\x}, {40*\x*\x*\x*\x});		
\end{scope}
			
\begin{scope}[shift={(1,.85)},scale=.1,rotate=170]
				
\draw [fill=gray] plot [smooth cycle] coordinates {(0,0) (1,1) (3,1) (1,0) (2,-1)};
				
\end{scope}
			
\begin{scope}[shift={(.5,.8)},scale=.1]
\draw [fill=gray] plot [smooth cycle] coordinates {(0,0) (1,1) (3,1) (1,0) (2,-1)};				
				
\draw [fill=gray] plot [smooth cycle] coordinates {(10,2) (11,3) (12,1) (11,-2) (10,0)};
							
\end{scope}
			
\begin{scope}[shift={(-.2,.8)}]
				
\draw[fill=gray](1.2,.58) circle(.01);
\draw[fill=gray](1.22,.48) circle(.015);
\draw[fill=gray](1,.5) circle(.04);
\draw[fill=gray](1.4,.45) circle(.03);
\draw[fill=gray](1.25,.23) circle(.02);
				
\draw[fill=gray](1.8,.35) circle(.025);
\draw[fill=gray](1.15,.23) circle(.015);
						
\draw[fill=gray](1.35,.63) circle(.02);
				
\end{scope}
					
\begin{scope}[shift={(.5,.8)}]				
\draw[fill=gray](1,.5) circle(.05);
\draw[fill=gray](1.3,.55) circle(.03);
\draw[fill=gray](1.25,.43) circle(.02);
\draw[fill=gray](1.35,.63) circle(.02);
				
\end{scope}

\begin{scope}[shift={(1.45,1.6)},scale=.14]
\draw[fill=gray, domain=-1.8:1.8, smooth, variable=\x, xscale=.5,yscale=1] plot ({\x}, {40*\x*\x*\x*\x});
\end{scope}
			
\draw plot [smooth cycle] coordinates {(0,0)(2,0)(2,2)(0,2)};

\draw[fill=gray](1,.5) circle(.05);
\draw[fill=gray](1.2,.3) circle(.04);
\draw[fill=gray](1.1,.45) circle(.035);
\draw[fill=gray](1.3,.55) circle(.03);
\draw[fill=gray](1.25,.43) circle(.02);
\draw[fill=gray](1.15,.63) circle(.02);
\draw[fill=gray](1.12,.53) circle(.01);
\draw[fill=gray](1.2,.58) circle(.01);
\draw[fill=gray](1.22,.48) circle(.015);
			
\end{tikzpicture}
\caption{Densely packed thin spikes and microscopically small components leading to loss of rigidity. For simplicity, the figure illustrates a two-dimensional example.}
\label{fig:spikes}
\end{figure}
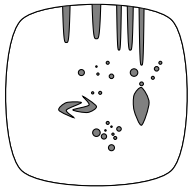
	
To remedy these phenomena, motivated by our recent works \cite{KFZ:2021, KFZ:2023}, we introduce a \textit{curvature regularization} of the form
\begin{equation}\label{vanishing_curvature_term}
\F^h_{\rm{curv}}(E):= h^{2}\kappa_h \int_{\partial E\cap \Omega_h}|\bm{A}|^{2}\,\mathrm{d}\H^2\,,
\end{equation}
where $\bm{A}$ denotes the second fundamental form of $\partial E\cap \Omega_h$ and   $\kappa_h$ satisfies  $ \kappa_h\to 0^{+}$ as $h\to 0^{+}$ at a \OOO sufficient rate \EEE(see  \eqref{rate_1_gamma_h} for details). The presence of this additional Willmore-type energy penalization allows us to use the \textit{piecewise rigidity estimate} \cite[Theorem 2.1]{KFZ:2021} in the analysis. It is a \textit{singular perturbation} for the void set $E$ and not for the deformation $v$, i.e., no higher-order derivatives of $v$ are involved in the model.  We also refer the interested reader to \cite{KFZ:2022}, where a related atomistic model is studied and additional explanations for the presence of a microscopic analog of the term in \eqref{vanishing_curvature_term} are given, see \cite[Subsection 2.5]{KFZ:2022}. As mentioned therein, curvature regularizations of similar type are commonly used in the mathematical and physical literature of SDRI models, for instance in the description of heteroepitaxial growth of elastically stressed thin films or material voids, see \cite{AnGurt,  FonFusLeoMor14, FonFusLeoMor15, GurtJabb, Herr,   voigt,  SieMikVoo04}. Despite the possible modeling relevance, we mention that the presence of the curvature contribution in our model is only for mathematical reasons as a regularization term. In particular, it does not affect the structure of the effective limiting model.

The total energy of a pair $(v,E)$ is then given by the sum of the two terms in \eqref{typical_energies_1} and \eqref{vanishing_curvature_term}, i.e.,  
\begin{equation*}
\mathcal F^h(v,E):=\mathcal F^h_{\rm{el, per}}(v,E)+\mathcal F^h_{\rm{curv}}(E)\,,
\end{equation*}
having set  $\beta_h := h^{2}$ and  $\varphi(\nu)\equiv 1$ for all $\nu\in \S^2$. The main outcome of this work is then Theorem \ref{thm:main_kirchhoff_gamma_conv}, where we show that the rescaled energies $\big(h^{-3}\F^h(\cdot,\cdot)\big)_{h>0}$ $\Gamma$-converge in an appropriate topology to an effective two-dimensional model that is of the form
\begin{equation}\label{limiting_energy_intro}
\frac{1}{24}\int_{S\setminus V}\Q_2\big(\mathrm{II}_y(x')\big)\,\mathrm{d}x'+\H^1\big(\partial^*V\cap S\big)+2\H^1\big(J_{(y, \nabla'y)}\setminus \partial^*V\big)\,. 
\end{equation}
Here, $V\subset S$ denotes a set of finite perimeter in $S\subset \mathbb{R}^2$   and represents the void part in the limiting two-dimensional plate. As in \cite{SantiliSchmidt2022}, the limiting admissible deformations turn out to be \textit{possibly fractured and creased flat isometric immersions}, i.e., $y\in SBV^{2,2}_{\mathrm{isom}}(S;\mathbb{R}^3)$, see Section \ref{subsec:main_results} for precise definitions. The approximate gradient of $y$ is denoted here by $\nabla'y$ and $\mathrm{II}_y$ denotes the induced second fundamental form on $y(S)$. The density of the limiting elastic energy, which should be conceived as a curvature energy on isometric immersions of the \OOO midsurface \EEE $S$, depends on a quadratic form $\Q_2$ which is defined through the quadratic form $D^2W({\rm Id})$ of linearized elasticity via a suitable minimization problem, see \eqref{def:Q_2} for its precise definition. The second term in \eqref{limiting_energy_intro} accounts for the presence of limiting voids by measuring the total length of their boundaries. The last term in \eqref{limiting_energy_intro} takes the fact  into account that, in the limit, voids might collapse into \OOO discontinuities \EEE of the limiting deformation $y$ or its derivative $\nabla'y$, corresponding to cracks or folds of the limiting plate, respectively. Exactly due to their origin, the length of those should be counted twice in the energy.
	
Let us once again mention that one fundamental difference between our work and that of \cite{SantiliSchmidt2022} lies in the assumptions on the admissible void sets. While we allow for voids with general geometry employing a mild curvature regularization, \cite{SantiliSchmidt2022} is based on specific restrictive assumptions on the void geometry, namely the  so-called \textit{$\psi$-minimal droplet assumption}, cf.~\cite[Equation (6)]{SantiliSchmidt2022}. This can be interpreted as an $L^{\infty}$-diverging bound on the curvature of the boundary of the voids in the initial thin plate. In our setting, the \MMM curvature penalization term \eqref{vanishing_curvature_term} \EEE can be thought of as imposing an $L^2$-diverging bound on the curvature: its nature allows for the voids to concentrate at arbitrarily small scales (independently  of $h$), while also allowing for a diverging (with $h$) number of connected components of voids, see Remark~\ref{choice_of_model}(iv). Of course, our more general model comes at the expense of using more sophisticated geometric rigidity estimates \cite{KFZ:2021} compared to the classical one of \cite{friesecke2002theorem}. As in \cite{KFZ:2023}, our strategy relies on modifying the deformations and their gradients on a small part of the domain  such that  the new deformations  are actually Sobolev except for the boundaries of a controllable number of cubes, with a good control on the elastic energy. Concerning \OOO the \EEE derivation of compactness and $\Gamma$-liminf, the estimate for the surface parts  in \eqref{limiting_energy_intro} is the most delicate step and requires a fine control on the jump height of these modifications. By means of a \OOO contradiction argument based on a \EEE blow up method, we are able to reduce the problem to the setting of thin rods, which allows us to directly use \OOO the compactness result from  \cite[Theorem~2.1]{KFZ:2023}.\EEE 

Besides the geometry of voids, our work differs from \cite{SantiliSchmidt2022} by the fact that therein  recovery sequences are only provided under specific regularity properties for the  outer Minkowski-content of $\partial^* V$ and $J_{(y, \nabla' y)}$. By means of the coarea formula, the latter assumption allows to construct a sequence of three-dimensional voids with the required regularity, in particular satisfying the minimal droplet assumption. In the general case, however, a density result for boundaries of void sets and jump sets of $SBV$-functions appears to be required. Although many results are available in this direction, see, e.g., \cite{Cortesani-Toader:1999}, to the best of our knowledge they are all incompatible with the isometry constraint, i.e., with  $y\in SBV^{2,2}_{\mathrm{isom}}(S;\mathbb{R}^3)$. As approximation results are usually built on convolution techniques, it indeed cannot be expected to obtain a density result satisfying exactly an isometry constraint. Yet, we are able to obtain a density result which can control the deviation from an isometry in a controlled way, quantified in terms of the thickness $h$. This is then enough to construct recovery sequences by adapting the \emph{ansatz} from the purely elastic case \cite{friesecke2002theorem}. We regard this part as the most original technical novelty of the current paper, and we  believe that the technique may be applicable also in other related settings.

\subsection{Organization of the paper}\label{plan_of_paper} 
Our paper is organized as follows. In Section \ref{model_main_result} we introduce the model and state the main compactness and $\Gamma$-convergence results, i.e., Theorems \ref{thm:main_kirchhoff_cptness} and \ref{thm:main_kirchhoff_gamma_conv}, respectively, together with some additional \MMM  modeling \EEE remarks. In Section~\ref{preparatory_modifications}, we collect the necessary technical ingredients for the proofs, namely   the nonlinear piecewise rigidity estimates from \cite{KFZ:2021} and the Korn-Poincar{\'e} inequality for $SBV^2$-functions with small jump set from \cite{Cagnetti-Chambolle-Scardia} adapted to our setting, \OOO see \EEE Subsection \ref{se: rigi}, as well as the construction of almost Sobolev replacements for sequences of deformations with equibounded energy, \OOO see \EEE Subsections \ref{sec: locest}--\ref{sec: global_constructions_proofs}. Section \ref{compactness} contains the proof of Theorem \ref{thm:main_kirchhoff_cptness}, while Sections \ref{gamma_liminf} and \ref{sec: gamma_limsup} contain the proof of Theorem~\ref{thm:main_kirchhoff_gamma_conv}(i),(ii) respectively. Finally, in Appendices \ref{sec: aux estimates} and \ref{sec: standard_liminf} we give the proofs of some auxiliary facts that are themselves not new but \GGG that are \EEE presented here for completeness only.
	
\subsection{Notation}\label{Notation}
We close the introduction with some basic notation. Given \OOO$d\in \N$, \EEE $U\subset \R^{\OOO d\EEE}$ open, we denote by $\mathfrak{M}(U)$ the collection of all measurable subsets of $U$, and by $\mathcal{P}(U)$ the one of subsets of finite perimeter in $U$.  Given $A, B \in \MMM \mathfrak{M}(U) \EEE$, we write $\chi_A$ for the characteristic function of $A$, $A \triangle B$ for their symmetric difference, $A\subset \subset B$ iff $\overline{A} \subset B$, and $\mathrm{dist}_{\H}(A,B)$ for the Hausdorff distance between $A$ and $B$. For $v \in \mathbb{R}^d$ we denote by  $\MMM |v|_\infty \EEE := \max\{|v_k| \colon k=1,\dots,d\}$ its $\ell_\infty$-norm, while for the $|\cdot|_\infty$-distance of a point $x$ \GGG (respectively a set $B$)  to a set $A$, we write ${\rm dist}_\infty(x,A)$ \GGG (respectively $\mathrm{dist}_\infty(A,B)$)\OOO. For every $A\subset \R^d$ and $\delta>0$, we \MMM define \EEE  
\begin{equation}\label{neigh-def}
(A)_\delta:=\{x\in \R^d\colon \mathrm{dist}(x,A)<\delta\}\,.
\end{equation} 
For $E\in \mathcal{P}(U)$   we denote by  $\partial^*E$ the essential boundary of $E$, see \cite[Definition~3.60]{Ambrosio-Fusco-Pallara:2000}. \OOO For $d=3$ we \EEE also denote by $\mathcal{A}_{\rm reg}(U)$ the collection of all open sets $E \subset U$ such that $\partial E \cap U $ is a two-dimensional $C^2$-surface in $\mathbb{R}^3$. Surfaces and functions of $C^2$-regularity will be called $C^2$-regular or just regular in the following. For $E \in \mathcal{A}_{\rm reg}(U)$ we denote by $\bm{A}$ the second fundamental form of $\partial E\cap U$, i.e., $|\bm{A}| = \sqrt{\kappa_1^2 + \kappa_2^2}$, where $\kappa_1$ and $\kappa_2$ are the corresponding principal curvatures.  By $\nu_{E}$ we indicate the outer unit normal to  $\partial E\cap U$. 

 The inner product of two vectors $a,b\in \R^3$ will be denoted by $a\cdot b$, and their exterior product by $a\wedge b$. We further write 
$\mathbb{S}^2:= \lbrace \nu \in \R^3\colon \, |\nu|=1\rbrace$.
By ${\rm id}$   we denote the identity mapping on $\R^3$ and by ${\rm Id} \in \R^{3\times 3}$  the identity matrix.  For each $F \in \R^{3 \times 3}$ we let ${\rm sym}(F) := \frac{1}{2}\left(F+F^T\right)$
and we also \MMM introduce \EEE
$SO(3) := \lbrace F\in\R^{3 \times 3}\colon F^TF = {\rm Id}, \, \det F = 1\rbrace$. 
Moreover, we denote by $\R^{3\times 3}_{\rm sym}$ and $\R^{3 \times 3}_{\rm skew}$ the space of symmetric and skew-symmetric matrices, respectively. 
For $\sigma>0$, we denote by $T_\sigma$ the linear transformation in $\R^3$ with matrix representation given by 
\begin{equation}\label{anisotropic_dilation}
T_\sigma:=\mathrm{diag}(1,1,\sigma)
\end{equation} 
with respect to the canonical basis $\{e_1,e_2,e_3\}$ of $\mathbb{R}^3$. For $d,k\in \N$, we indicate by $\mathcal{L}^d$ and $\mathcal{H}^{k}$ the $d$-dimensional Lebesgue measure and the $k$-dimensional Hausdorff measure, respectively.  
\EEE

For \MMM  $U \subset \R^n$ open, for \EEE $p\in [1,\infty]$ and $d, k \in \N$ we denote by $L^p(U;\R^d)$ and $W^{k,p}(U;\R^d)$ the standard Lebesgue and Sobolev spaces, respectively.  Partial derivatives of a function $f\colon U \to \R^d$ will be denoted by \OOO $(\partial_if)_{i=1,2,3}$\EEE. We use standard notation for $SBV$-functions, cf.~\cite[Chapter~4]{Ambrosio-Fusco-Pallara:2000} for the definition and a detailed presentation of the properties of this space. In particular,  for a function $u\in SBV(U;\R^d)$,  we write $\nabla u$ for the approximate gradient, $J_u$ for its jump set, and $u^{\pm}$ for the one-sided traces on $J_u$. 
Finally, 
\begin{equation*}
SBV^2(U;\R^d):=\Big\{u\in SBV(U;\R^d)\colon \int_{U}|\nabla u|^2\,\mathrm{d}x+\H^{d-1}(J_u\cap U)<+\infty\Big\}\,.
\end{equation*}

\section{The models and the main results}\label{model_main_result}

In this section we introduce the model and present our main results. \EEE

\subsection{The three-dimensional model}\label{rescaling_of_models} 
We  \EEE denote the reference configuration of the thin plate by  
\begin{align}\label{eq: Omegah}
\Omega_{h}:=S\times (-\tfrac{h}{2},\tfrac{h}{2})\subset \mathbb{R}^3\,,
\end{align}
where $S\subset \mathbb{R}^2$ is \MMM a bounded \EEE Lipschitz domain describing the midsurface of the thin plate, and $0<h\ll 1$ denotes its infinitesimal thickness. For a large but fixed constant $M\gg 1$, the set of \emph{admissible pairs} of function and set is given by 
\begin{equation}\label{initial_admissible_configurations}
\mathcal{A}_h:=\left\{(v,E)\colon\ E\in \mathcal{A}_{\rm reg}(\Omega_h),\ v\in \NNN W^{1,2}\EEE(\Omega_h\setminus \overline{E};\mathbb{R}^3)\,,\ v|_E\equiv\mathrm{id}\,,\ \|v\|_{L^\infty(\Omega_h)} \le   M\right\}\,.
\end{equation}
The third condition in \eqref{initial_admissible_configurations} is for definiteness only. While the last one therein is merely of technical nature to ensure compactness, it is also justified from a physical viewpoint since it corresponds to the assumption that the solid under consideration is confined in a bounded region. For each pair $(v,E) \in \mathcal{A}_h$,  we consider the energy
\begin{align}\label{initial_energy}
\F^{h}(v,E):= 
\int_{\Omega_h\setminus \overline{E}} W(\nabla v)\,\mathrm{d}x+h^2 \H^2(\partial E\cap \Omega_h)+  h^2\kappa_h  \int_{\partial E\cap \Omega_h}|\bm{A}|^2\,\mathrm{d}\mathcal{H}^2\,.
\end{align}
The first two terms correspond to the \emph{elastic} and the \emph{surface energy} of perimeter-type, respectively, while the third one is a \emph{curvature regularization} of Willmore-type, where $\bm A$ denotes the second fundamental form of $\partial  E\cap \Omega_h$ and $\kappa_h$ is a suitable infinitesimal parameter specified in \eqref{rate_1_gamma_h} below. The factor $h^2$ in front of the surface terms ensures that the elastic and the surface energy are of same order, since the elastic energy per unit volume is of order $h^{2}$, see the introduction for some heuristic explanations of the model.

The function $W \colon \mathbb{R}^{3\times 3}\to\mathbb{R}_+$ in \eqref{initial_energy} represents the \emph{stored elastic energy density}, satisfying   standard assumptions of nonlinear elasticity. In particular, we suppose that $W\in C^0(\R^{3\times 3};  \R_+ )$ satisfies 
\begin{align}\label{eq: nonlinear energy}
\begin{split}
\rm{(i)} & \ \  \text{Frame indifference: $W(RF) = W(F)$ for all $R \in SO(3)$ and $F\in \mathbb{R}^{3\times 3}$}\,,\\ 
\rm{(ii)} & \ \ \text{Single energy-well structure:}\ \{W=0\} \equiv SO(3)\,,\\
\rm{(iii)} & \ \ \text{Regularity:}  \  \text{$W$ is $ C^2$\OOO-regular \EEE in a neighborhood of $SO(3)$}\,,\\
\rm{(iv)} & \ \ \text{Coercivity:}  \   \text{There exists $c>0$ such that for all $F \in \mathbb{R}^{3\times 3}$ it holds that} \\ 
& \quad   \quad \quad  \quad \quad \quad \, W(F) \geq c\, \mathrm{dist}^2(F,SO(3))\,,\\
\rm{(v)} & \ \ \text{Growth condition:}  \   \text{There exists $C>0$ such that for all $F \in \mathbb{R}^{3\times 3}$ it holds that} \\ 
& \quad   \quad \quad  \quad \quad \quad \, W(F) \le  C\, \mathrm{dist}^2(F,SO(3))\,.  \EEE
\end{split}
\end{align}
We note that  condition (v) excludes the natural assumption $W(F) \to +\infty$ as $\det F \to 0^+$, but it is needed in our analysis for the construction of recovery sequences. The choice of an isotropic perimeter energy is purely for simplicity of the exposition, \MMM and \EEE more general anisotropic perimeters can be chosen in the model without substantial   changes in the proofs, see also Remark \ref{choice_of_model} below. As for the parameter $\kappa_h>0$ in the curvature regularization, we require  
\begin{equation}\label{rate_1_gamma_h}
\kappa_h h^{-2}\to 0\,, \quad \kappa_h  h^{-52/25} \to +\infty \quad \text{ as $h\to 0$}\,. 
\end{equation}
Similarly to its role in \cite[\OOO Equation (2.5)\EEE]{KFZ:2023}, it is a technical assumption that has been chosen for simplicity rather than optimality. Its choice is related to the application of suitable piecewise rigidity results \cite{KFZ:2021} \MMM and Korn inequalities \EEE  \cite{Cagnetti-Chambolle-Scardia}, and will become apparent along the proof, see in particular~\eqref{parameters_for_uniform_bounds}.

As is by now customary in  dimension-reduction problems, we perform an anisotropic change of variables to reformulate the \MMM problem \EEE in a fixed reference domain: recalling \eqref{anisotropic_dilation}, we rescale our variables and set 
\begin{equation}\label{order_1_domain}
\Omega:=\Omega_1\,, \quad V:=T_{1/h}(E)=\{x\in \Omega: \OOO T_h x\EEE
\in E\}\,.
\end{equation}
Accordingly, the rescaled deformations are defined by $y\colon\Omega\to \R^3$ via
\begin{equation}\label{from_v_to_y}
y(x)\OOO:=v(T_hx)\EEE\,.
\end{equation}
The total energy is rescaled by a factor $h^3$, hence we set 
\begin{equation}\label{rescaled_energy}
\mathcal{E}^{h}(y,V):=h^{-3}\F^{h}(v,E)\,,
\end{equation}
where the pair $(y,V)$ is related to $(v, E)$ via \eqref{order_1_domain} and \eqref{from_v_to_y}. In this rescaling,  one  factor $h$  corresponds to the change of volume  and the other factor $h^{2}$  corresponds to the average elastic energy per unit volume in the bending regime for plates.  

The corresponding rescaled gradient will be denoted as usual by 
\begin{equation*}
\nabla_hy(x):=\Big(\partial_1 y,\partial_2 y, \frac{1}{h}\partial_3y\Big)(x)=\nabla v(\OOO T_hx\EEE)\,. 
\end{equation*}
Therefore, by a change of variables we find  
\begin{align}\label{eq: newenergy}
\mathcal{E}^{h}(y,V) = h^{-2}\int_{\Omega\setminus \overline{V}} W(\nabla_hy(x))\,\mathrm{d}x+\int_{\partial V\cap \Omega}\big|\big(\nu^1_{V}(z), \nu^2_{V}(z), h^{-1}\nu^3_{V}(z)\big)\big|\, \mathrm{d}\H^2(z) + \mathcal{E}^h_{\rm  curv}(V) \,,
\end{align}
where $\nu_{V}(z):=\big(\nu^1_{V}(z),\nu^2_{V}(z), \nu^3_{V}(z)\big)$ denotes the outer unit normal to $\partial V\cap \Omega$ at the point $z$. Note that for the rescaling of the perimeter part of the energy, one can test with smooth functions and use the divergence theorem, as we also commented in \cite[Equation (2.10)]{KFZ:2023}.

\EEE

Regarding the term $\mathcal{E}^h_{\rm  curv}(V)$ which denotes the curvature-related energetic contribution for the rescaled set $V$, its precise expression after the change of variables will not be of \OOO specific \EEE 
use in the subsequent analysis. Hence, we refrain from \MMM giving its explicit form.  \EEE

In view of \eqref{anisotropic_dilation} and  \eqref{initial_admissible_configurations}, the space of rescaled admissible pairs \OOO of \EEE deformations and voids is given by
\begin{equation}\label{admissible_configurations_h_level}
\hat{\mathcal{A}}_h:=\big\{(y,V)\colon V\in \mathcal{A}_{\mathrm{reg}}(\Omega)\,,\ y\in \NNN W^{1,2}\EEE(\Omega\setminus \overline{V}; \R^3)\,, \ y|_{V}\equiv T_h(\mathrm{id})\,, \ \|y\|_{L^\infty(\Omega)}\leq M\big\}\,. 
\end{equation}
\subsection{Limiting model and main result}\label{subsec:main_results}
As in the purely elastic case considered in \cite{friesecke2002theorem}, the density of the limiting elastic energy will depend on the quadratic form $\Q_3\colon \R^{3\times 3}\to \R$, which is defined as 
\begin{equation}\label{def:Q_3}
\Q_3(G):=D^2W(\mathrm{Id})[G,G]\,.
\end{equation}
Due to \eqref{eq: nonlinear energy}, $\mathcal{Q}_3$ vanishes on $\R^{3\times 3}_{\mathrm{skew}}$ and is strictly positive-definite on $\R^{3\times 3}_{\mathrm{sym}}$. We also define $\Q_2\colon\R^{2\times 2}\to \R$ as
\begin{equation}\label{def:Q_2}
\Q_2(A):= \min_{c\in \R^3} \Q_3(\hat A+c\otimes e_3)\,,
\end{equation}
by minimizing over stretches in the $x_3$-direction. Here, for a matrix $A \in \R^{2 \times 2}$ we denote by $\hat A$ its extension to a $(3\times 3)$-matrix by adding zeros in the third row and column.

As in \cite{SantiliSchmidt2022}, where a similar model under more restrictive geometric assumptions on the voids was studied, the  limiting energy will be defined on the space
\begin{equation}\label{limiting_admissible_pairs}
\begin{split}
\hspace{-0.7em}\mathcal{A}&:=\Big\{(y,V)\in SBV_{\mathrm{isom}}^{2,2}(S;\R^3)\times\mathcal{P}(S)\colon y|_V=\mathrm{id}|_V\,,   \|y\|_{L^\infty(\Omega)}\leq M\Big\}\,,
\end{split}
\end{equation} 
where 
\begin{equation*}
SBV_{\mathrm{isom}}^{2,2}(S;\R^3):=	\{y\in SBV^2(S;\R^3)\colon \nabla'y\in SBV^2(S;\R^{3\times 2}), (\nabla'y,\OOO \partial_1 y\EEE\wedge \OOO \partial_2y \EEE)\in SO(3)\ \text{a.e.}\}\,,
\end{equation*}
and $\nabla'y:=(\OOO \partial_1 y, \partial_2y\EEE)$, \MMM i.e., the \EEE limiting admissible deformations are isometric away from the jump set
\begin{equation}\label{eq:jump_set}
J_{(y,\nabla'y)}:=J_{y}\cup J_{\nabla'y}\,.
\end{equation}
\MMM We \EEE denote by \EEE $\tilde y\colon \Omega \to \mathbb{R}^3$  maps of the form 
\begin{equation}\label{eq:y_ident}
\tilde y(x)=y(x_1,x_2) \ \forall \, x \in \Omega\,, \ \text{for some } y\in SBV^{2,2}_{\mathrm{isom}}(S;\R^3)\,,
\end{equation}
and, similarly, by  $\tilde V\subset \Omega$  sets of the form  
\begin{equation}\label{eq:V_ident}
\tilde V= V\times \left(-\frac{1}{2}, \frac{1}{2}\right) \ \ \text{for some } V\in \mathcal{P}(S)\,.
\end{equation} 
In what follows, the pair $(\tilde y,\tilde V)$ will always be associated to $(y,V)$ via \eqref{eq:y_ident} and \eqref{eq:V_ident} whenever it appears. \MMM For \EEE  a mapping $y\in SBV^{2,2}_{\mathrm{isom}}(S;\R^3)$, we also introduce its second fundamental form via
\begin{equation}\label{def:second_FF}
\mathrm{II}_y:=\big(\OOO \partial_iy\cdot\partial_j(\partial_1y\wedge \partial_2y)\EEE\big)_{1\leq i,j\leq 2}\,.
\end{equation}
With  the definitions \eqref{def:Q_2} and \eqref{eq:jump_set} in mind, for each $(y,V)\in \mathcal{A}$, the limiting two-dimensional energy of Blake-Zisserman-Kirchhoff-type (cf.~also \cite{Blake-Zisserman, Carriero-Leaci-BZ, SantiliSchmidt2022}) is defined as
\begin{equation}\label{limiting_two_dimensional_energy}
\mathcal{E}^{0}\big(y,V\big):=
\frac{1}{24}\int_{S\setminus V}\Q_2(\mathrm{II}_y(x'))\,\mathrm{d}x'+\H^1\big(\partial^*V\cap S\big)+2\H^1\big(J_{(y, \nabla'y)}\setminus \partial^*V\big)\,. 
\end{equation}

As mentioned also in the introduction, the limiting two-dimensional model features the classical bending-curvature energy derived in \cite{friesecke2002theorem} and two surface terms related to the presence of voids. The middle term on the right-hand side of \eqref{limiting_two_dimensional_energy} corresponds to the energy contribution of the limiting void $V$, whereas the last one therein is associated to discontinuities or folds of the deformation, represented by $J_y$ and $J_{\nabla'y}$, respectively. The origin of this term is due to the fact that voids may collapse into discontinuity curves in the limit, and thus appears with a factor $2$, see Figure~\ref{fig:dimred}.

\begin{figure}[htp]
\includegraphics[width=1\linewidth]{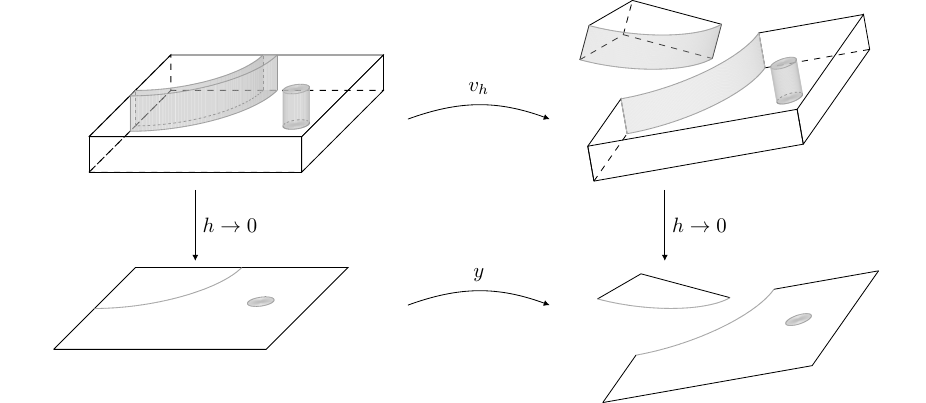}
\caption{Bending a plate with voids \MMM and cracks: \EEE Collapsing voids lead to discontinuity curves for $J_y$. \MMM Folds corresponding to the presence of $J_{\nabla'y}$ are  not depicted for simplicity.}
\label{fig:dimred}
\end{figure} 

\EEE

With these definitions and notations, our main results can be summarized as follows.

\begin{theorem}\label{thm:main_kirchhoff_cptness}
$\text{(\underline {Compactness})}$ Let $(h_j)_{j\in \N}\subset (0,\infty)$ with $h_j\searrow 0$ and $(y_{h_j}, V_{h_j}) \in \hat{\mathcal{A}}_{h_j}$ (cf.~\eqref{admissible_configurations_h_level}) be such that 
\begin{equation}\label{uniform_rescaled_energy_bound}
\sup_{j\in \mathbb{N}}\E^{h_j}(y_{h_j},V_{h_j})<+\infty\,. 
\end{equation}
Then, there exists $(y,V)\in \mathcal{A}$ (cf.~\eqref{limiting_admissible_pairs}) such that, up to a non-relabeled subsequence,
\begin{align}\label{compactness_properties}
\begin{split}
\rm{(i)}\quad&\chi_{V_{h_j}}\longrightarrow \chi_{\tilde V}\ \text{ in } L^1(\Omega)\,,\\
\rm{(ii)}\quad& y_{h_j}\longrightarrow \tilde y  \text{ in  } L^1(\Omega;\R^3)  \,,\\
\rm{(iii)}\quad& \nabla_{h_j}y_{h_j}\longrightarrow  \big(\nabla'\tilde y,\OOO \partial_1 \tilde y\wedge \partial_2\tilde y\EEE\big)  \ \text{ strongly in } L^2(\Omega;\R^{3\times 3}) \,,
\end{split}
\end{align} 
where  $(\tilde y, \tilde V)$ is associated to $(y,V)$ via \eqref{eq:y_ident} and \eqref{eq:V_ident}.
\end{theorem}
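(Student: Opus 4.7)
The plan is to reduce the statement, via the modification procedure announced in Subsections~\ref{sec: locest}--\ref{sec: global_constructions_proofs}, to the classical Friesecke-James-M\"uller framework applied piecewise, and to extract the cylindrical structure of the limit void from the anisotropy of the rescaled surface term.

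\textbf{Step 1: compactness of the voids.} From \eqref{eq: newenergy} and \eqref{uniform_rescaled_energy_bound} I read off the uniform bounds
\[
\int_{\partial V_{h_j}\cap\Omega}\bigl|\bigl(\nu^1_{V_{h_j}},\nu^2_{V_{h_j}},h_j^{-1}\nu^3_{V_{h_j}}\bigr)\bigr|\, \d\H^2\le C,\qquad \H^2(\partial V_{h_j}\cap\Omega)\le C.
\]
In particular $\int_{\partial V_{h_j}\cap\Omega}|\nu^3_{V_{h_j}}|\, \d\H^2\le C h_j\to 0$. By compactness of sets of finite perimeter in $\Omega$, a non-relabeled subsequence satisfies $\chi_{V_{h_j}}\to \chi_W$ in $L^1(\Omega)$ for some $W\in\mathcal{P}(\Omega)$. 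Lower semicontinuity of the directional variations forces $\int_{\partial^*W}|\nu^3_W|\,\d\H^2=0$, i.e.\ $W$ has no essential boundary transverse to $e_3$; by a Fubini/slicing argument this yields $W=\tilde V$ with $V:=\{x'\in S:\ (x',0)\in W\}\in\mathcal{P}(S)$, proving (i).

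\textbf{Step 2: modification and rigidity.} I invoke the construction from Subsections \ref{sec: locest}--\ref{sec: global_constructions_proofs}, which replaces $y_{h_j}$ on a set of controllably small measure with an almost Sobolev map $\hat y_{h_j}$, obtained by gluing local rigid-motion corrections on a grid of cubes $(Q_k)$ of side $\rho_j\to 0$ whose total number is controlled by $\H^2(\partial V_{h_j}\cap\Omega)$. On each cube of $\Omega\setminus \tilde V$ that is not touched by the modification, the piecewise rigidity result \cite[Theorem~2.1]{KFZ:2021} provides constant rotations $R_k^j\in SO(3)$ with
\[
\int_{Q_k} |\nabla_{h_j} y_{h_j} - R_k^j|^2\,\d x \le C\, h_j^{-2}\!\int_{Q_k}\!\mathrm{dist}^2(\nabla_{h_j}y_{h_j},SO(3))\,\d x\le C h_j^{2}\,\E^{h_j}(y_{h_j},V_{h_j}).
\]
Averaging in the $x_3$-variable, these cube-wise rotations yield a two-dimensional $SBV^2$ rotation field $R^j(x')$ with small jump set (controlled by the number of boundary cubes, hence by the surface energy) and $\|R^j-\nabla_{h_j}y_{h_j}\|_{L^2}\to 0$.

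\textbf{Step 3: $L^2$-convergence of the rescaled gradients.} Applying the $SBV^2$ Korn-Poincar\'e inequality of \cite{Cagnetti-Chambolle-Scardia} (Subsection~\ref{se: rigi}) to the field $R^j$, I extract a limit $R\in SBV^2(S;SO(3))$. The $L^2$ convergence of $R^j$ to $R$, combined with the negligibility of the modified set, yields
\[
\nabla_{h_j} y_{h_j}\longrightarrow R\quad\text{strongly in }L^2(\Omega;\R^{3\times 3}).
\]
A standard slicing in $x_3$ of $y_{h_j}$ shows that the limit deformation $\tilde y$ is independent of $x_3$, so that $\tilde y(x)=y(x')$ for some $y\in SBV^2(S;\R^3)$; the first two columns of $R$ must then coincide with $\nabla' y$ (the approximate gradient), and since $R\in SO(3)$ a.e., the third column equals $\partial_1 y\wedge\partial_2 y$. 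This identification gives (iii) and, through Poincar\'e on each connected component away from $J_{(y,\nabla'y)}$, also the $L^1$ convergence (ii) after passing to a further subsequence.

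\textbf{Step 4: the limit lies in $\mathcal{A}$.} The identity $(\nabla'y,\partial_1 y\wedge\partial_2 y)\in SO(3)$ a.e.\ gives the isometry constraint; applying the rigidity/Korn estimate once more to the mapping $\nabla' y$ yields $\nabla'y\in SBV^2$ with $\H^1(J_{\nabla'y})<\infty$, hence $y\in SBV^{2,2}_{\mathrm{isom}}(S;\R^3)$. The constraint $y|_V=\mathrm{id}|_V$ follows by passing to the limit in $y_{h_j}|_{V_{h_j}}=T_{h_j}\mathrm{id}$, using (i), while the $L^\infty$ bound is preserved by pointwise a.e.\ convergence.

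\textbf{Main difficulty.} The delicate step is Step 2: the cubes on which the classical rigidity fails are those intersecting $\partial V_{h_j}$, and their number as well as the measure of $\tilde V$ must be controlled simultaneously with the jump set of the piecewise rotation field $R^j$ in order for the $SBV^2$-Korn inequality to apply and deliver a genuine limit in $SBV^{2,2}_{\mathrm{isom}}$. This is precisely where the curvature regularization \eqref{vanishing_curvature_term} and the scaling constraint \eqref{rate_1_gamma_h} enter, ensuring that the number and geometry of the bad cubes are compatible with the rigidity estimates of \cite{KFZ:2021}.
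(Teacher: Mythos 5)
Your overall architecture is the paper's: Step 1 (perimeter bounds, Reshetnyak lower semicontinuity forcing $\nu^3_{\tilde V}=0$, Fubini/coarea slicing to get the cylindrical structure) is essentially identical to the paper's Step 1, and Steps 2--4 follow the same route as the paper's Step 2, namely building piecewise-rigid $SBV^2$ replacement fields on a grid of $h$-cubes via Theorem \ref{prop:rigidity} and passing to the limit. There are, however, two concrete points where your write-up as stated would not go through.

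First, in Step 3 you claim to ``extract a limit $R\in SBV^2(S;SO(3))$'' by ``applying the $SBV^2$ Korn--Poincar\'e inequality of \cite{Cagnetti-Chambolle-Scardia}''. That inequality is not a compactness statement: it produces, for a \emph{fixed} function, an exceptional set and a rigid-motion (or Sobolev) replacement. In the paper it is used only inside the local construction (Proposition \ref{summary_estimates_proposition}) to upgrade the suboptimal exponent $\eps^{9/10}$ coming from Theorem \ref{prop:rigidity}(ii),(iii) to the sharp $\eps$ on cubes where $\partial E_h^*$ has small area. The extraction of the limits $r_\rho$, $R_\rho$ is done via Ambrosio's $SBV$ compactness and closure theorems, which require the three uniform bounds of \eqref{almost_the same_volume_and_control_of_energy}(i)--(iii) ($L^\infty$, $L^2$ of the absolutely continuous gradient, and $\H^2$ of the jump set) after the anisotropic rescaling $T_{1/h}$; one must also check separately that the limit field takes values in $SO(3)$ (via $\int\mathrm{dist}^2(\tilde R_h,SO(3))\le Ch^2$) and identify $\nabla' r=R'$ by applying Ambrosio's closure theorem to $y_h$ itself, using the convergence in measure from \eqref{almost_the same_volume_and_control_of_energy}(iv). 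Relatedly, your per-cube estimate bounding $\int_{Q_k}|\nabla_{h_j}y_{h_j}-R_k^j|^2$ by $Ch_j^2\,\E^{h_j}$ is too crude to be summed over the $\sim h_j^{-2}$ cubes; what is actually used is that the \emph{sum} of the local elastic energies is $\le Ch^3$, cf.\ \eqref{localized_elastic_energy_on_good_cuboids}. Finally, the strong $L^2$ convergence in (iii) needs a truncation of the set where $|\nabla_{h_j}y_{h_j}|$ is large (the paper's \eqref{eq:for_equiint}); convergence in measure plus an $L^2$ bound alone does not give it.

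Second, in Step 4 the assertion that one obtains $\nabla' y\in SBV^2$ by ``applying the rigidity/Korn estimate once more to the mapping $\nabla' y$'' is not a valid step: $\nabla'y$ is a limit object and no rigidity estimate applies to it. Its $SBV^2$ regularity and finite jump set come from the fact that it is the $SBV$ limit (again via Ambrosio's theorems) of the fields $\tilde R_h$, which carry the uniform bounds \eqref{eq:r_h_SBV_bound3.0}. You should also record that the rigidity estimates are only available on the shrunken domains $\Omega_{h,\rho}$, so a monotonicity/diagonal argument in $\rho\to 0$ is needed to obtain the global limit fields on $\Omega$. With these corrections the argument coincides with the paper's proof.
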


\begin{definition}\label{eq:tau_convergence}
We say that $(y_{h_j},V_{h_j})\overset{\tau}{\longrightarrow}\big(y,V\big)$ as $j\to \infty$ if and only if  \eqref{compactness_properties} holds. 
\end{definition}

Note that \eqref{admissible_configurations_h_level} implies that $\sup_{j\in \N}\|y_{h_j}\|_{L^\infty(\Omega)}\leq M$, and therefore the convergence in \eqref{compactness_properties}(ii) actually holds in $L^p(\Omega;\R^3)$ for every $p\in [1,+\infty)$. We are now ready to state the main $\Gamma$-convergence result.

\begin{theorem}\label{thm:main_kirchhoff_gamma_conv}
$(\underline{ {\Gamma}\text{-convergence}})$  Let $(h_j)_{j\in \N}\subset (0,\infty)$ with $h_j\searrow 0$. The sequence of functionals $(\E^{h_j})_{j\in \N}$ $\Gamma(\tau)$-converges to the functional $\E^{0}$ of \eqref{limiting_two_dimensional_energy}, i.e., the following two inequalities hold true.\\[-7pt]

\noindent $\rm{(i)}$ $(\underline{ {\Gamma}\text{-liminf inequality}})$ Whenever $(y_{h_j},V_{h_j})\overset{\tau}{\longrightarrow}\big(y,V\big)$, then
\begin{equation*}
 \GGG\liminf_{j\to +\infty}\E^{h_j}(y_{h_j},V_{h_j})\geq \E^0\big(y,V\big)\,.\EEE
\end{equation*} 
$\rm{(ii)}$ $(\underline{ {\Gamma}\text{-limsup inequality}})$ For every $\big(y,V\big)\in \mathcal{A}$ there exists $ (y_{h_j},V_{h_j})\in \hat{\mathcal{A}}_{h_j}$ for each $j \in \mathbb{N}$ such that $(y_{h_j},V_{h_j})\overset{\tau}{\longrightarrow}\big(y,V\big)$ and 
\begin{equation*}
\limsup_{j\to +\infty} \E^{h_j}(y_{h_j},V_{h_j})\leq \E^0\big(y,V\big)\,. 
\end{equation*} 
\end{theorem}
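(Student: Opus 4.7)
The plan is to prove the $\Gamma$-liminf and $\Gamma$-limsup inequalities separately, leveraging the technical apparatus developed in Section~\ref{preparatory_modifications}: the nonlinear piecewise rigidity of \cite{KFZ:2021}, the Korn-Poincar\'e inequality for $SBV^2$-functions with small jump set from \cite{Cagnetti-Chambolle-Scardia}, and the almost-Sobolev modifications of equibounded sequences. The elastic part of the liminf will follow the classical Friesecke-James-M\"uller scheme adapted to the piecewise setting, whereas the surface part and the full limsup are where the novelty lies.

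\emph{Liminf inequality.} Fix $(y_{h_j},V_{h_j})\overset{\tau}{\longrightarrow}(y,V)$ with uniformly bounded rescaled energy. I would first apply the almost-Sobolev replacement to modify $y_{h_j}$ into a $W^{1,2}$-deformation outside a union of controllably-many small cubes, preserving the rescaled gradient convergence in \eqref{compactness_properties}(iii) without increasing the elastic cost. On the good region I would use piecewise rigidity to extract an approximate rotation field $R_{h_j}$ converging to $(\nabla'y,\partial_1 y\wedge \partial_2 y)$ in $L^2(S;\R^{3\times 3})$; a second-order Taylor expansion of $W$ around ${\rm Id}$ via \eqref{def:Q_3}, combined with a pointwise minimization in the $x_3$-direction that yields \eqref{def:Q_2}, then produces the bending bound $\frac{1}{24}\int_{S\setminus V}\Q_2(\mathrm{II}_y)\,\mathrm{d}x'$. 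For the surface contributions I would exploit the rescaled form of the perimeter in \eqref{eq: newenergy}: since vertical components of $\nu_V$ are weighted by $h_j^{-1}$, any thin-slab void whose (nearly horizontal) upper and lower faces are separated by $O(h_j)$ contributes its horizontal projection \emph{twice}, whereas a nearly vertical lateral face contributes its projection only once. A blow-up and contradiction argument at $\H^1$-a.e.~point of $\partial^*V\cup J_{(y,\nabla'y)}$ reduces the local analysis to the one-dimensional thin-rod situation already treated in \cite[Theorem~2.1]{KFZ:2023}, from which the factor~$1$ on $\partial^*V$ and the factor~$2$ on $J_{(y,\nabla'y)}\setminus\partial^*V$ emerge.

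\emph{Limsup inequality.} Given $(y,V)\in\mathcal{A}$ with $\E^0(y,V)<+\infty$, the plan is two-stage. First, a quantitative density result: approximate $(y,V)$ by piecewise affine/polyhedral data $(y_\delta,V_\delta)$ such that $J_{y_\delta}$, $J_{\nabla'y_\delta}$ and $\partial^*V_\delta$ are finite unions of segments, $y_\delta\to y$ strongly in $SBV^{2,2}$, $V_\delta\to V$ in $L^1$ with convergent perimeters, and the isometric defect $\|(\nabla'y_\delta)^T\nabla'y_\delta-{\rm Id}\|_\infty\leq \omega(\delta)$ with $\omega(\delta)\to 0$ at any prescribed rate. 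Second, for $\delta=\delta(h)\to 0$ slowly enough, I would set
\[
y_{h}(x) := y_\delta(x') + h x_3\,(\partial_1 y_\delta\wedge \partial_2 y_\delta)(x') + \tfrac{h^2 x_3^2}{2}\,d_\delta(x'),
\]
with $d_\delta$ the pointwise minimizer realizing $\Q_2$ in \eqref{def:Q_2}, and
\[
V_{h} := \big[(V_\delta)_{r_h}\cup (J_{y_\delta}\cup J_{\nabla'y_\delta})_{r_h}\big] \times \big(-\tfrac{1}{2},\tfrac{1}{2}\big),
\]
for a vanishing thickening $r_h\ll h$, followed by a $C^2$-smoothing of corners so that $\partial V_h$ is regular. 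By construction $\mathrm{dist}(\nabla_h y_h,SO(3))=O(h)+O(\omega(\delta))$, hence $h^{-2}W(\nabla_h y_h)$ converges to the correct $\Q_2$-density on $S\setminus V$; the rescaled perimeter converges to $\H^1(\partial^*V\cap S)+2\H^1(J_{(y,\nabla'y)}\setminus\partial^*V)$ since each thickened jump contributes both an upper and a lower face; and the curvature contribution $h^2\kappa_h\int|\bm A|^2$ is $o(1)$ by \eqref{rate_1_gamma_h}. A diagonal extraction in $\delta=\delta(h)$ closes the argument.

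\emph{Main obstacle.} The decisive technical point is the quantitative density result itself. Classical $SBV^{2,2}$-approximations in the spirit of \cite{Cortesani-Toader:1999} rely on convolution and necessarily destroy the pointwise constraint $(\nabla'y,\partial_1 y\wedge\partial_2 y)\in SO(3)$, while exact isometric approximation of a general $y\in SBV^{2,2}_{\rm isom}$ by regular data is impossible. The heart of the limsup argument will therefore be to establish a density statement in which the isometric defect is present but quantitatively controllable, and to calibrate the scales $(\delta,r_h,\kappa_h,h)$ so that simultaneously (a) the FJM \emph{ansatz} built from $y_\delta$ yields $\mathrm{dist}(\nabla_h y_h,SO(3))=O(h)$ on the regular part, producing the correct bending density in the limit, and (b) the $C^2$-smoothing of the polyhedral void set leaves the Willmore contribution bounded by $o(\kappa_h^{-1}h^{-2})=o(1)$ thanks to \eqref{rate_1_gamma_h}. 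Engineering these two scaling requirements simultaneously will be the delicate part.
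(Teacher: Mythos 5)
Your liminf plan is essentially the paper's: the elastic bound is obtained exactly as you describe (almost-Sobolev replacements, piecewise rotation fields, Taylor expansion of $W$, identification of the $x_3$-affine structure of the limit strain via difference quotients, and minimization over the third column giving $\Q_2$), and the surface bound is indeed proved by a blow-up at $\H^1$-a.e.\ point of $\partial^*V\cup J_{(y,\nabla'y)}$ followed by a contradiction argument that selects a good stripe and reduces to the thin-rod compactness of \cite[Theorem~2.1]{KFZ:2023}. Your heuristic that collapsing voids contribute ``two faces'' is not how the factor $2$ is actually extracted -- the paper assumes the density is below $2-\delta$ and derives $y^+=y^-$, $R^+=R^-$ from the rod limit, contradicting $x_0\in J_{(y,\nabla'y)}$ -- but the mechanism you name (blow-up, contradiction, rod reduction) is the right one.

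The limsup, however, contains a genuine gap. Your construction rests entirely on a ``quantitative density result'' producing polyhedral data $(y_\delta,V_\delta)$ with $\|(\nabla'y_\delta)^T\nabla'y_\delta-{\rm Id}\|_\infty\le\omega(\delta)$, strong $SBV^{2,2}$-convergence, and $\omega(\delta(h))\ll h$; you correctly identify this as the heart of the matter but give no construction, and no such global $L^\infty$-controlled approximation of a general $y\in SBV^{2,2}_{\rm isom}$ is available (mollification destroys the isometry constraint, and there is no obvious way to repair it uniformly near the jump set). The paper resolves this very differently, and locally rather than globally: after a preliminary step that covers most of $J_{(y,\nabla'y)}$ by thin rectangles absorbed into the void (this is what produces the factor $2$ in the perimeter, much as in your thickening ansatz), it builds a Whitney-type dyadic covering of the remaining region at scales $\le h$, refined where the (Cortesani--Toader-regularized) jump set concentrates; squares carrying too much jump are swallowed by an additional void set of area $O(h)$ and perimeter $O(\eta)$. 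On each surviving square a Korn--Poincar\'e replacement yields an affine map whose gradient is close to $SO(3)$ only in $L^2$, with error $\lesssim h\|\nabla'(\nabla'y,b_y)\|_{L^2(q'')}$, and these are glued by a partition of unity into the FJM ansatz. The total misfit is then $o(h^2)$ in $L^2$ because the union of the problematic squares has measure $O(h)$ -- no pointwise control on the isometric defect is ever needed. Without either this local construction or a proof of your global density statement, the limsup inequality is not established.
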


\begin{remark}[Possible extensions and variants]\label{choice_of_model}
\normalfont (i) One could   consider more general perimeter energies of the form
$$\beta_h\int_{\partial E\cap \Omega_h}\varphi(\nu_E)\, \mathrm{d}\H^2\,,$$
where $\lim_{h\to 0} (h^{-2}\beta_h)=\beta>0$ and $\varphi$ is a norm in $\R^3$. Similarly \MMM to \cite{KFZ:2023}, \EEE for simplicity of the exposition we have chosen $\beta_h := h^2$ and $\varphi$ to be the standard isotropic Euclidean norm in $\R^3$. The general case is analogous in its treatment, where the limiting surface energy in \eqref{limiting_two_dimensional_energy} is given by 
\[\int_{\OOO \partial^*V\cap S\EEE}\varphi_0(\nu_V)\,\mathrm{d}\H^2+2\int_{J_{(y,\nabla'y)}\setminus\partial^*V}\varphi_0(\nu_{J_{(y,\nabla'y)}})\,\mathrm{d}\H^2\,,\]
with 
\[\varphi_0(x_1, x_2) := \min_{c\in \R} \varphi(x_1 , x_2, c)\,,\]
see \cite[Sections 2 and 6]{SantiliSchmidt2022} for details, especially related to some technical adjustments needed in the construction of recovery sequences for the void sets in the presence of anisotropic surface energy densities.

\smallskip
(ii) Completely analogously to \cite[Remark~2.1(ii)]{KFZ:2023}, any singular perturbation of the form 
$$h^2\kappa_h\int_{\partial E\cap \Omega_h}|\bm{A}|^q\,\mathrm{d}\H^2$$
with $q\geq 2$ would be a legitimate choice for a curvature regularization, up to adjusting the condition for $\kappa_h$ in \eqref{rate_1_gamma_h} (which \OOO would \EEE then depend also on $q$).  Let us nevertheless also mention here that the choice $q \ge 2$ is essential, see \cite[Lemma \OOO 2.12 \EEE and Example \OOO 2.13\EEE]{KFZ:2021}. For simplicity, we have chosen $q=2$, which corresponds to a curvature regularization of classical Willmore-type.  

\smallskip
(iii)  We also remark that compressive boundary conditions and body forces can be included into the $\Gamma$-convergence statement. Although  we omit the details here, we refer the interested reader to \cite[Section 6]{friesecke2002theorem} for a discussion in this direction, in the \MMM  purely \EEE elastic case.

\smallskip
 (iv) Finally, \cite[Example 2.1]{KFZ:2023} can easily be  adjusted to our setting, the only difference coming from the energy rescaling by a factor $h^{-3}$ instead of $h^{-4}$ in this case. This allows  to exhibit   configurations $(v_h, E_h) \in \mathcal{A}_h$ with
\begin{align*}
\sup_{h >0} h^{-3} \mathcal{F}^h(v_h, E_h) <+\infty\,,
\end{align*}
where $E_h$ consists of balls which concentrate on arbitrarily small scales (independently of $h$), and whose number is diverging at a rate faster than $h^{-1}$. In contrast, in  the setting of \cite{SantiliSchmidt2022} and for void sets consisting of a disjoint union of balls, the minimal droplet assumption implies a lower bound of order $h$ on the radius of each ball and an upper bound of order $h^{-1}$ for the total number of balls, cf.~\cite[Remark 3.1]{SantiliSchmidt2022}\OOO.
\EEE
\end{remark}

\section{Piecewise rigidity and Sobolev modification of deformations}\label{preparatory_modifications}

In this section we collect some preparatory ingredients which are of utter importance in the proofs of the compactness result of Theorem \ref{thm:main_kirchhoff_cptness} in Section \ref{compactness} and the $\Gamma$-liminf inequality of Theorem \ref{thm:main_kirchhoff_gamma_conv} in Section \ref{gamma_liminf}. Our reasoning follows in spirit the analogous one in \cite[Section 3]{KFZ:2023} and relies on the approximation of a sequence of deformations with equibounded energy by mappings which enjoy good Sobolev bounds on a large portion (and asymptotically all) of the bulk domain of the thin plate, while still retaining a good control on their total jump set in the full domain.  This will allow us to use tools from the theory of $SBV$ functions, in particular \textit{Ambrosio's compactness and lower semicontinuity  theorems} (cf.~\cite[Theorems 4.7 and 4.8]{Ambrosio-Fusco-Pallara:2000}), together with the proof strategy from \cite{friesecke2002theorem, SantiliSchmidt2022}. 

In this and the following sections, we will use  the continuum  subscript $h>0$ instead of the sequential subscript notation $(h_j)_{j\in \N}$ purely for notational convenience. To state the main results of this section, we need to introduce some further notation. Recall the definition of $\Omega_h$ in \eqref{eq: Omegah}. As the estimates provided by the rigidity result stated in Theorem \ref{prop:rigidity}   below are only local,  we need to introduce a slightly smaller reference domain. To this end,  for every  $\rho\in (0,1)$, we define 
\begin{equation}\label{Omega_h_local}
\Omega_{h, \rho}:=\big\{x'\in S\colon \mathrm{dist}(x',\partial S)> \rho \big\}\times\big(-\tfrac{(1-\rho)}{2}h,\tfrac{(1-\rho)}{2}h\big)\,.
\end{equation}
Eventually, in Sections \ref{compactness}--\ref{gamma_liminf} we will send $\rho \to 0^+$, after the convergence $h \to 0^+$. In what follows, for every $i\in \mathbb{Z}^2$, we introduce cubes and cuboids of \OOO sidelengths proportional to $h>0$, namely \EEE
\begin{equation}\label{eq:Q_h}
Q_{h}(i) :=(hi,0)+(-\tfrac{h}{2},\tfrac{h}{2})^3\,\ \ \ \ \text{and }  \ \ \ \hat Q_{h}(i) :=(hi,0)+(-\tfrac{3h}{2},\tfrac{3h}{2})^2\times(-\tfrac{h}{2},\tfrac{h}{2})\,,
\end{equation}
which will serve  to cover $\Omega_{h}$, \OOO up to the negligible skeleton of the covering\EEE. In particular, we set  
\begin{equation*}
\mathcal{Q}_{h} := \lbrace Q_{h}(i)\colon\hat Q_{h}(i)\subset \Omega_h \rbrace\,. 
\end{equation*} 
Similarly, for every $\rho\in (0,1)$ we set 
\begin{equation}\label{eq:Q_h_rho}
Q_{h,\rho}(i) :=(hi,0)+(1-\rho)(-\tfrac{h}{2}, \tfrac{h}{2})^3\,, \quad \hat Q_{h,\rho}(i) :=(hi,0)+ (1-\rho)\big((-\tfrac{3h}{2},\tfrac{3h}{2})^2\times(-\tfrac{h}{2},\tfrac{h}{2})\big)\,.
\end{equation}
Given a measurable set $K\subset \R^3$ and $\gamma >0$, we introduce the localized version of the total surface energy as 
\begin{equation}\label{F_surf_energy}
\mathcal{G}^\gamma_{\rm{surf}}(E;K):=\mathcal{H}^2(\partial E\cap K)+ \gamma \int_{\partial E\cap K}|\bm A|^2\,\mathrm{d}\mathcal{H}^2\,, 
\end{equation}  
using a general parameter $\gamma$ in place of $\kappa_h$ for later purposes. The total rescaled energy is then given by 
\begin{align}\label{eq: G energy}
\mathcal{G}^h(v,E):= \frac{1}{h^3} \int_{\Omega_h\setminus \overline{E}} W(\nabla v)\,\mathrm{d}x+ \frac{1}{h} \mathcal{G}^{\kappa_h}_{\rm{surf}}(E;\Omega_h)=h^{-3}\mathcal{F}^h(v,E)\,,
\end{align}
for $(v,E)\in \mathcal{A}_h$, \OOO cf.~\EEE \eqref{initial_admissible_configurations}, where $\mathcal{F}^h$ is as in \eqref{initial_energy}. Note again that one factor $h$ in the rescaling of the elastic energy corresponds to the volume of $\Omega_h$, while the extra factor $h^2$ corresponds to the average elastic energy per unit volume.

\begin{proposition}[Sobolev modification of deformations and their gradients]\label{Lipschitz_replacement}
Let  $0<\rho \le \rho_0$ for some universal $\rho_0>0$. Then, there exist constants $C:=C(S, M)>0$ and $h_0=h_0(\rho)>0$ such that  for every sequence $(v_h,E_h)_{h>0}$ with $(v_h,E_h) \in \mathcal{A}_h$  and  
\begin{equation}\label{Lipschitz_replacement_energy_bound}
\sup_{h>0} \mathcal{G}^h(v_h,E_h)<+\infty\,,
\end{equation}
and for every $0<h\leq h_0$ there exist fields $r_h\in SBV^2(\Omega_{h,\rho};\R^3)$ and  $R_h\in SBV^2(\Omega_{h,\rho};\R^{3\times 3})$   satisfying  the following properties:
\begin{align}\label{almost_the same_volume_and_control_of_energy}
\hspace{-1em}\begin{split}
\rm{(i)}\quad& \|r_h\|_{L^\infty(\Omega_{h,\rho})}+\|R_h\|_{L^\infty(\Omega_{h,\rho})}\leq  C\,,\\[5pt]
\rm{(ii)}\quad&  \H^2(J_{r_h})+\H^2(J_{R_h})\leq Ch\,,\\[3pt]
\rm{(iii)}\quad& \int_{\Omega_{h,\rho}}|\nabla r_h|^2\,\mathrm{d}x + \int_{\Omega_{h,\rho}}|\nabla R_h|^2\,\mathrm{d}x \leq Ch\,,\\[3pt]
\rm{(iv)}\quad& h^{-1}\L^3(\Omega_{h,\rho}\cap \{|v_h(x)-r_h|>\theta_h\})\to 0\,,\ h^{-1}\L^3(\Omega_{h,\rho}\cap\{|\nabla v_h-R_h|>\theta_h\})\EEE\to 0\,,
\end{split}
\end{align}	
where $(\theta_h)_{h>0}\subset(0,+\infty)$ is a sequence with 
\begin{equation}\label{eq:choice_theta_h}
\theta_h\to0 \ \ \text{and }\ \theta_hh^{-1}\to\infty\,.
\end{equation}
Moreover, there exists $w_h \in SBV^2(\Omega_{h,\rho};\R^3)$ such that 
\begin{align}\label{w_h_almost_the same}
\hspace{-1em}\begin{split}
\rm{(i)}\quad& h^{-1}\L^3(\{w_h\neq v_h\}\cap \Omega_{h,\rho})\to 0 \ \text{as } h\to 0\,,\\[3pt]
\rm{(ii)}\quad& \int_{\Omega_{h,\rho}}|\nabla w_h-R_h|^2\,\mathrm{d}x\leq Ch^3\,,\\[3pt]		
\rm{(iii)}\quad& \|w_h\|_{L^\infty(\Omega_{h,\rho})}+h^{-1}\H^2(J_{w_h})+h^{-1}\int_{\Omega_{h,\rho}}|\nabla w_h|^2\,\mathrm{d}x\leq  C\,,\\[5pt]
\rm{(iv)}\quad& J_{w_h}  \subset \Omega_{h,\rho}\cap \bigcup_{Q_h(i)\in \OOO \Q\EEE_{v_h}}\partial Q_{h}(i)\,,\ \text{for some } \OOO \Q\EEE_{v_h}\subset \OOO \Q\EEE_h \ \text{with } \#\OOO \Q\EEE_{v_h}\leq Ch^{-1}\,. 
\end{split}
\end{align}	
\end{proposition}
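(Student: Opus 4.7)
The strategy follows the blueprint of \cite[Section~3]{KFZ:2023}, adapted from thin rods to thin plates. First, for $h$ small enough (depending on $\rho$), cover $\Omega_{h,\rho}$ by the cubes $Q_{h}(i)$ with $Q_h(i)\in \mathcal{Q}_h$, so that every enlargement $\hat Q_h(i)\subset \Omega_h$. On each cuboid $\hat Q_h(i)$, apply the piecewise rigidity estimate from \cite{KFZ:2021} (Theorem~\ref{prop:rigidity}) to $v_h$ on $\hat Q_h(i)\setminus \overline{E_h}$: this produces a Caccioppoli partition $\{P^k_{h,i}\}_k$ of $\hat Q_h(i)\setminus \overline{E_h}$ together with rotations $R^k_{h,i}\in SO(3)$ satisfying a quadratic control of the form
\begin{equation*}
\sum_k \int_{P^k_{h,i}} |\nabla v_h - R^k_{h,i}|^2\,\mathrm{d}x \leq C\Big(\int_{\hat Q_h(i)\setminus\overline{E_h}} W(\nabla v_h)\,\mathrm{d}x + h^2\mathcal{G}^{\kappa_h}_{\mathrm{surf}}(E_h;\hat Q_h(i))\Big),
\end{equation*}
where the curvature regularization with $\kappa_h$ as in \eqref{rate_1_gamma_h} ensures that the correction terms are absorbed into the right-hand side. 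Call $Q_h(i)$ \emph{good} if one component, say $P^1_{h,i}$, occupies almost all of the volume of $\hat Q_h(i)$ (up to a small fraction depending on $h$) and bears negligible surface contribution, and \emph{bad} otherwise. A simple accounting using \eqref{Lipschitz_replacement_energy_bound} shows that each bad cube carries either elastic energy of order at least $ch^3$ or rescaled perimeter contribution of order at least $ch^2$; since the total elastic energy is $O(h^3)$ and the total surface energy is $O(h)$, one obtains a bound $\#\mathcal{Q}_{v_h}^{\mathrm{bad}}\leq Ch^{-1}$.

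On every good cube, define $R_h\equiv R^1_{h,i}$ and $r_h(x)\equiv R^1_{h,i}x + b_{h,i}$, where $b_{h,i}\in\R^3$ is chosen, via the Korn--Poincar\'e inequality for $SBV^2$-functions with small jump set \cite{Cagnetti-Chambolle-Scardia}, so that $r_h$ approximates $v_h$ in the $L^2$-sense on $P^1_{h,i}\cap Q_h(i)$. The key matching observation is that two neighboring good cubes $Q_h(i),Q_h(i')$ are both contained in a larger overlap region that sits inside both $\hat Q_h(i)$ and $\hat Q_h(i')$, so the rigidity estimate together with a small-rotation argument forces $|R^1_{h,i}-R^1_{h,i'}|=o(1)$ and, after identification, the affine maps agree on the common interface; hence $R_h$ and $r_h$ exhibit no jump between adjacent good cubes. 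On each bad cube, set $R_h$ and $r_h$ to fixed admissible constants (e.g.\ $\mathrm{Id}$ and $0$). Then (ii) follows since $J_{r_h}\cup J_{R_h}$ is contained in the boundaries of the bad cubes, with total area $\#\mathcal{Q}_{v_h}^{\mathrm{bad}}\cdot O(h^2)=O(h)$. Estimate (i) is a consequence of the a priori bound $\|v_h\|_{L^\infty}\leq M$ together with the energy-driven control of $|R^1_{h,i}|$ and $|b_{h,i}|$, while (iii) follows from summing the rigidity bounds over good cubes, noting that $\nabla R_h=0=\nabla r_h$ inside each good cube in the strong sense, so the Sobolev part of $\nabla R_h$ and $\nabla r_h$ carries no weight.

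For (iv), on each good cube $Q_h(i)$ the Korn--Poincar\'e inequality yields
\begin{equation*}
\int_{Q_h(i)}|v_h-r_h|^2\,\mathrm{d}x\leq C h^2 \int_{Q_h(i)}|\nabla v_h - R_h|^2\,\mathrm{d}x, \qquad \int_{Q_h(i)}|\nabla v_h-R_h|^2\,\mathrm{d}x \leq \varepsilon_h^2,
\end{equation*}
where $\varepsilon_h^2$ is the rigidity bound on that cube. Summing over $i$ and applying Chebyshev's inequality gives $\L^3(\{|v_h-r_h|>\theta_h\}\cap \Omega_{h,\rho})\leq C\theta_h^{-2}\sum_i \int_{Q_h(i)}|v_h-r_h|^2\leq C h^2\theta_h^{-2}\int_{\Omega_h\setminus\overline{E_h}}|\nabla v_h-R_h|^2\,\mathrm{d}x$, and the corresponding estimate for the gradient discrepancy, both of which are $o(h)$ by the choice \eqref{eq:choice_theta_h} of $\theta_h$ and the total $L^2$-bound $O(h^3)$ on $\mathrm{dist}^2(\nabla v_h, SO(3))$ arising from coercivity of $W$. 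The bad cubes contribute at most $O(h^{-1})\cdot O(h^3)=O(h^2)=o(h)$ to these measures.

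Finally, $w_h$ is obtained by gluing: on each bad cube, replace $v_h$ by $r_h$; on each good cube, modify $v_h$ on the small discrepancy set $\{|v_h-r_h|>\theta_h\}\cup\{|\nabla v_h-R_h|>\theta_h\}\cup (E_h\cap Q_h(i))$ by $r_h$ extended smoothly. By construction $w_h$ coincides with $v_h$ off a set of $\L^3$-measure $o(h)$, giving (i); (ii) follows from the Sobolev estimate on $v_h$ on the bulk of each good cube combined with the $\varepsilon_h^2$-bound; (iii) follows from the $L^\infty$-bound on $v_h$ and on $r_h$, the bound from (ii), and the fact that the induced jump set is contained in $\bigcup_{Q_h(i)\in\mathcal{Q}_{v_h}}\partial Q_h(i)$ with $\mathcal{Q}_{v_h}:=\mathcal{Q}_{v_h}^{\mathrm{bad}}$ (plus the cubes touched by the discrepancy modification), the latter family having cardinality $O(h^{-1})$; and (iv) records this containment. \textbf{The main obstacle} is the propagation of the piecewise rigidity across cuboids so that, for adjacent good cubes, the dominant rotations $R^1_{h,i}, R^1_{h,i'}$ can be identified (up to a small-measure modification) without artificially creating jumps; this is precisely where \eqref{rate_1_gamma_h} and the curvature penalization enter decisively, following the scheme carried out in detail in Subsections~\ref{sec: locest}--\ref{sec: global_constructions_proofs}.
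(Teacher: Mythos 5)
Your overall architecture (cover by cubes, apply the variable-domain rigidity of Theorem~\ref{prop:rigidity} plus the Korn--Poincar\'e inequality on good cubes, count bad cubes by $Ch^{-1}$, glue) matches the paper's, but there is one genuine gap at the heart of the construction. You define $R_h$ and $r_h$ \emph{piecewise} on good cubes ($R_h\equiv R^1_{h,i}$, $r_h$ affine) and assert that for adjacent good cubes ``the affine maps agree on the common interface; hence $R_h$ and $r_h$ exhibit no jump between adjacent good cubes.'' This is false: the rigidity estimate only yields closeness, namely $|R_{i,h}-R_{i',h}|^2\leq Ch^{-3}(\eps_{i,h}+\eps_{i',h})$ as in Corollary~\ref{difference_rigid-motions}, not equality, and there is no mechanism to modify the rotations so that they literally coincide while keeping the local $L^2$ approximation of $\nabla v_h$. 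Consequently the measure-theoretic jump set of a piecewise-constant $R_h$ contains essentially every interface between adjacent good cubes; since there are $O(h^{-2})$ good cubes each contributing boundary area $O(h^2)$, this gives $\H^2(J_{R_h})=O(1)$ rather than the required $O(h)$ in \eqref{almost_the same_volume_and_control_of_energy}(ii). This is not cosmetic: after the anisotropic rescaling one needs $\H^2(J_{\tilde R_h})\le h^{-1}\H^2(J_{R_h})\le C$ to invoke Ambrosio's compactness theorem in Section~\ref{compactness}, and your construction would give $\H^2(J_{\tilde R_h})\to\infty$.

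The paper's resolution is to mollify rather than to identify: one takes a partition of unity $(\psi^i_h)$ subordinate to the \emph{interior} good cubes and sets $R_h=\sum_i\psi^i_h R_{i,h}$, $r_h=\sum_i\psi^i_h r_{i,h}$ (and $w_h=\sum_i\psi^i_h z_{i,h}$), see \eqref{eq:r_h_construcc}--\eqref{eq:w_h_construcc}. This converts the small differences of neighboring rigid motions into small \emph{gradients}, $\|\nabla R_h\|^2_{L^\infty(Q_h(i))}\le Ch^{-5}\sum_{j\in\mathcal N(i)}\eps_{j,h}$, which sum to the $Ch$ bound in (iii), while the jump set is confined to the boundaries of the $O(h^{-1})$ bad and exterior cubes, giving (ii). A secondary calibration point: your bad-cube dichotomy (elastic energy $\gtrsim h^3$ or perimeter $\gtrsim h^2$) is not quite the right threshold; the paper requires $\eps_{i,h}\le h^4$ on good cubes precisely so that the factor $C_\eta\gamma^{-15/2}h^{-3}\eps_{i,h}$ in \eqref{eq: main rigitity}(i) stays bounded, and $h^{-4}\cdot O(h^3)=O(h^{-1})$ still yields the correct bad-cube count. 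With the threshold $h^3$ the rigidity constant would not be uniform in $h$.
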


The maps $r_h$, $R_h$ can be thought of as regularized piecewise affine/constant approximations of $v_h$, $\nabla v_h$, respectively, see \eqref{almost_the same_volume_and_control_of_energy}(iv). These approximations enjoy good $SBV^2$-bounds provided by \eqref{almost_the same_volume_and_control_of_energy}(i)--(iii), which will be crucial later to employ compactness and lower semicontinuity results in $SBV^2$  on their appropriate $T_{1/h}$-rescalings. The role of the maps $w_h$ in the second part of the proposition is analogous, with the extra advantage that $w_h$ is obtained by  changing the map $v_h$ on an asymptotically vanishing portion of the volume, see \eqref{w_h_almost_the same}(i), while having a more precise information on the geometry of the jump set, the latter being in a sense \textit{cubic}, see \eqref{w_h_almost_the same}(iv).

We note that $r_h$, $R_h$, and $w_h$ depend on $\rho$ which we do not include in the notation for simplicity. 
In the rest of this section, we focus on proving Proposition \ref{Lipschitz_replacement}, so that starting from Section \ref{compactness} we can give the proofs of our main compactness and $\Gamma$-convergence results. In the proofs, we will send the parameters $h, \rho $ to zero in this order. Thus, for the sake of keeping the notation simple, generic constants which are independent of $h, \rho$  will be denoted by $C$, and we will use a subscript notation in order to highlight the dependence of a particular constant on a specific parameter.

\subsection{Rigidity results}\label{se: rigi}

This subsection is devoted to recalling some rigidity results which are the basis for our proofs, as was also the case for the derivation of effective theories for elastic rods with voids in our previous work \cite{KFZ:2023}.

\emph{Geometric rigidity in variable domains:}  We first recall the result \cite[Theorem 2.1]{KFZ:2021}, see also \cite[Section 3.1]{KFZ:2023}. As mentioned in the introduction, the behavior of deformations $v$ on connected components of $\Omega_h \setminus \overline{E}$ might fail to be rigid,  cf.~\cite[Example~2.6]{KFZ:2021}. The main result in \cite{KFZ:2021} consists in showing that rigidity estimates can be obtained outside of a slightly thickened version of the voids. We omit the proofs of the next two results, since they are identical to the ones of \cite[Proposition~3.3 and Theorem 3.1]{KFZ:2023}.

\begin{proposition}[Global thickening of sets]\label{prop:setmodification} 
Let $h,\rho>0$ and $\gamma \in (0,1)$. There exists a universal constant $C_0>0$ and $\eta_0 = \eta_{0}(\rho) \in (0,1)$, such that for every $\eta\in (0,\eta_0]$ the following holds:

Given   $E\in \mathcal{A}_{\rm reg}(\Omega_h)$, we can find an open set $E_{h,\eta,\gamma}$ such that $E  \subset  E_{h,\eta,\gamma}  \subset \Omega_h$, $\partial E_{h,\eta,\gamma}\cap \Omega_h$ is a union of finitely many $C^2$-regular  submanifolds, and   
\begin{align}\label{eq: partition-new}
\begin{split}
{\rm (i)} & \ \    \mathcal{L}^3(E_{h,\eta,\gamma}\setminus E) \le h\eta \gamma^{1/{ 2}}  \mathcal{G}_{\rm surf}^{\gamma h^2}(E;\Omega_h),  \quad \quad \quad  \dist_\mathcal{H}(E, E_{h,\eta,\gamma}) \le h\eta \gamma^{1/{2}}\,,  
\\
{\rm (ii)} & \ \      \H^2(\partial E_{h,\eta,\gamma} \cap \Omega_h )   \leq (1+C_0\eta) \,  \mathcal{G}_{\rm surf}^{\gamma h^2}(E;\Omega_h)\,.
\end{split}
\end{align}
\end{proposition}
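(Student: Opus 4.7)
Since this statement reproduces verbatim \cite[Proposition 3.3]{KFZ:2023} (itself resting on \cite[Theorem 2.1]{KFZ:2021}), with the only change being that $\partial E\cap\Omega_h$ is now a $2$-dimensional surface in $\R^3$ rather than a $1$-dimensional curve in $\R^2$, I would closely follow the argument there. The idea is to construct $E_{h,\eta,\gamma}$ as a tubular thickening of $E$ at the scale
\[
\delta := h\eta \gamma^{1/2},
\]
with a separate treatment of the ``good'' part of $\partial E$, where the second fundamental form is controlled, and the ``bad'' part, where it blows up. The Hausdorff bound in (i) and the inclusion $E\subset E_{h,\eta,\gamma}\subset \Omega_h$ will then be automatic from $\delta$ being the thickening scale (the smallness assumption $\eta\le\eta_0(\rho)$ serves only to ensure that the thickening does not cross $\partial\Omega_h$).

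On the good part $\Gamma_g := \{x\in\partial E\cap\Omega_h : |\bm A(x)|\le 1/(2\delta)\}$, the reach is at least $2\delta$, so the parallel surface $\Gamma_g+\delta\nu_E$ is $C^2$-regular and the tube/Steiner formula yields
\[
\H^2(\Gamma_g+\delta\nu_E) \le \int_{\Gamma_g} (1+\delta\kappa_1)(1+\delta\kappa_2)\, d\H^2 \le \H^2(\Gamma_g) + \delta\int_{\Gamma_g} |\bm A|\, d\H^2 + C\delta^2\int_{\Gamma_g} |\bm A|^2\, d\H^2.
\]
The scale $\delta=h\eta\gamma^{1/2}$ is chosen precisely so that Cauchy--Schwarz and AM--GM give
\[
\delta\int_{\Gamma_g}|\bm A|\,d\H^2 \le \eta\sqrt{\H^2(\Gamma_g)\cdot \gamma h^2\int_{\Gamma_g}|\bm A|^2\, d\H^2} \le \frac{\eta}{2}\,\mathcal{G}^{\gamma h^2}_{\rm surf}(E;\Omega_h),
\]
while the $\delta^2$ contribution equals $C\eta^2\cdot\gamma h^2\int_{\Gamma_g}|\bm A|^2 \le C\eta^2\,\mathcal{G}^{\gamma h^2}_{\rm surf}(E;\Omega_h)$. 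Together these give the $(1+C_0\eta)$-factor in (ii) from the good part.

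The main obstacle is the ``bad'' set $\Sigma := \{x\in\partial E\cap\Omega_h : |\bm A(x)|>1/(2\delta)\}$, where the offset surface ceases to be immersed. By Chebyshev, $\H^2(\Sigma)\le 4\delta^2\int_\Sigma|\bm A|^2 = 4\eta^2\cdot\gamma h^2\int_\Sigma|\bm A|^2$, so $\Sigma$ has small area. I would cover $\Sigma$ by a finite Vitali/Besicovitch family of open balls of radius $\simeq \delta$ and include these balls in $E_{h,\eta,\gamma}$. The total surface area of the balls is bounded by $C\delta^2\cdot(\text{number of balls})=O(\H^2(\Sigma))$, which by the above is absorbed into $C\eta\,\mathcal{G}^{\gamma h^2}_{\rm surf}(E;\Omega_h)$. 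A generic choice of radii via Sard's theorem ensures transversality with the offset of the good part, so that $\partial E_{h,\eta,\gamma}\cap\Omega_h$ is a finite union of $C^2$-regular submanifolds.

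Finally, for (i) the Hausdorff estimate is immediate from $\dist(x,E)\le\delta$ for every $x\in E_{h,\eta,\gamma}$, while a layer-cake/coarea argument combined with the perimeter bound already obtained gives
\[
\L^3(E_{h,\eta,\gamma}\setminus E) \le \int_0^\delta \H^2\bigl(\partial E_t\cap\Omega_h\bigr)\, dt \le \delta\cdot(1+C_0\eta)\,\mathcal{G}^{\gamma h^2}_{\rm surf}(E;\Omega_h),
\]
which is the desired inequality once $\eta_0$ is chosen small enough. The hard step is thus the $C^2$-regular patching of the tubular neighborhood with the covering of $\Sigma$, and all quantitative bounds (volume, perimeter, Hausdorff) follow from a single calibration of the thickening scale to $\delta=h\eta\gamma^{1/2}$.
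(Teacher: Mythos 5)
Your proposal does not reproduce the paper's argument: the paper omits the proof altogether and defers to \cite[Proposition~3.3]{KFZ:2023}, which rests on the covering construction of \cite{KFZ:2021}. There, the enlarged set is obtained by adding to $E$ a finite family of balls/cubes selected by a stopping-time (Besicovitch-type) procedure in which \emph{each selected ball carries at least a fixed quantum $\epsilon_0$ of the curvature energy} $\gamma h^2\int_{\partial E}|\bm{A}|^2\,\mathrm{d}\H^2$; the number of added balls, hence the added perimeter $\sum_i r_i^2$ and volume $\sum_i r_i^3\le \delta\sum_i r_i^2$, is then bounded directly by $\epsilon_0^{-1}\,\mathcal{G}_{\rm surf}^{\gamma h^2}(E;\Omega_h)$, and the new boundary is by construction a finite union of pieces of $\partial E$ and of finitely many spheres/faces. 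Your calibration $\delta=h\eta\gamma^{1/2}$ and the Cauchy--Schwarz estimate for $\delta\int|\bm{A}|$ in the tube formula are correct, but the offset of the good part is not needed (keeping $\partial E$ there costs nothing in (ii)) and, as explained below, it creates rather than solves difficulties.

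The genuine gap is the counting of the balls covering $\Sigma=\{|\bm{A}|>1/(2\delta)\}$. From Chebyshev you get $\H^2(\Sigma)\le 4\delta^2\int_\Sigma|\bm{A}|^2\,\mathrm{d}\H^2$, and you then assert that the number $N$ of covering balls of radius $\simeq\delta$ satisfies $N\delta^2=O(\H^2(\Sigma))$. Small $\H^2$-measure does not imply coverability by few $\delta$-balls: $\Sigma$ may consist of a very large number of tiny high-curvature spots scattered over $\partial E$ (exactly the densely packed spikes the curvature term is designed to control), each requiring its own ball, and neither $\H^2(\Sigma\cap B_\delta(x))\gtrsim\delta^2$ nor $\int_{B_\delta(x)\cap\partial E}|\bm{A}|^2\gtrsim 1$ follows from the pointwise condition $|\bm{A}(x)|>1/(2\delta)$. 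This count is the crux of the whole proposition, and the repair is precisely the mechanism of \cite{KFZ:2021}: define badness through the local integral of $|\bm{A}|^2$ at scale $\delta$ rather than pointwise, so that each bad ball is guaranteed its quantum of energy and $N\le C\epsilon_0^{-1}\int_{\partial E}|\bm{A}|^2\,\mathrm{d}\H^2$, whence $N\delta^2\le C\eta^2\,\mathcal{G}_{\rm surf}^{\gamma h^2}(E;\Omega_h)$. Two further (more minor) points: a pointwise bound $|\bm{A}|\le 1/(2\delta)$ does \emph{not} give reach $\ge 2\delta$ (reach is global; two sheets of $\Gamma_g$ may lie at distance $\ll\delta$), so the $\delta$-level set of the distance function can have a ridge set that is not a finite union of $C^2$ submanifolds, which your Sard/transversality remark does not address; and your final volume estimate yields $\delta(1+C_0\eta)\mathcal{G}_{\rm surf}^{\gamma h^2}(E;\Omega_h)$, exceeding the constant-free bound in (i), so the thickening scale must be recalibrated. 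Finally, note that Theorem~\ref{prop:rigidity} refers to \emph{the} set produced by this proposition, so replacing the construction by a different one would also require reproving that theorem for your set.
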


On the complement of $\Omega_{h,\rho}\setminus \overline{E_{h,\eta,\gamma}}$ one can obtain good quantitative piecewise rigidity estimates, as \MMM provided by \EEE the following theorem. For its statement, we  recall \eqref{eq:Q_h}--\eqref{eq:Q_h_rho}, and mention that this version is specific to our purposes compared to the more general one of \cite[Theorem 3.1]{KFZ:2023}, from which it actually follows by a direct application.

\begin{theorem}[Geometric rigidity in variable domains]\label{prop:rigidity}  
Let  $h,\rho>0$ and $\gamma \in (0,1)$. There exist a universal constant $C_0>0$,   $\eta_0 = \eta_0(\rho)>0$, and for each $\eta\in (0,\eta_0]$ there exists  $C_\eta>0$ such that the following holds:\\
\noindent For every  $E \in \mathcal{A}_{\rm reg}(\Omega_h)$, denoting by  $E_{h,\eta,\gamma}$ the set of Proposition \ref{prop:setmodification}, for every $Q_h(i)\in \mathcal{Q}_h$,  for the connected components $(U_j)_j$  of $\hat Q_{h,\rho}(i)  \setminus \overline{{E_{h,\eta,\gamma}}}$ and for every $y  \in W^{1,2}(\Omega_h \setminus \overline{E};\R^3)$ there exist  corresponding rotations $(R_j)_j \subset SO(3)$ and vectors $(b_j)_j\subset \R^3$ such that 
\begin{align}\label{eq: main rigitity}
{\rm (i)} & \ \  \sum\nolimits_j \int_{U_j}\big|{\rm sym}\big((R_j)^T \nabla y-\mathrm{Id}\big)\big|^2\,\mathrm{d}x 
\leq C_0 \big(1 +  C_\eta \gamma^{-15/2} h^{-3} \varepsilon   \big)\int_{\hat Q_h(i)\setminus  \overline{E}} \mathrm{dist}^2(\nabla y,SO( 3))\,\mathrm{d}x\,,\notag\\
{\rm (ii)} & \ \   \sum\nolimits_{j}\int_{U_j}    \big|(R_j)^T \nabla y-\mathrm{Id}\big|^2\,\mathrm{d}x \leq C_\eta  \gamma^{-3} \int_{\hat Q_h(i)\setminus  \overline{E}} \mathrm{dist}^2(\nabla y,SO( 3))\,\mathrm{d}x\,,
\notag\\
{\rm (iii)} & \ \   \sum\nolimits_{j}\int_{U_j}  \frac{1}{h^2} \big|  y- (R_jx+b_j) \big|^2 \,\mathrm{d}x \leq C_\eta  \gamma^{-5} \int_{\hat Q_h(i)\setminus  \overline{E}} \mathrm{dist}^2(\nabla y,SO( 3))\,\mathrm{d}x\,,
\end{align}
 where for brevity $\eps := \int_{\hat Q_h(i)\setminus \overline{E}} \dist^2(\nabla y, SO( 3)) \, {\rm d}x$. 
\end{theorem}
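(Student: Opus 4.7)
The plan is to deduce the three estimates from \cite[Theorem 3.1]{KFZ:2023} by localizing that general piecewise rigidity result to the cuboid $\hat Q_h(i)$ in place of the full reference domain and then rescaling to unit size. The first step is to invoke Proposition \ref{prop:setmodification} with parameters $(\eta,\gamma)$, producing the thickened regularized void set $E_{h,\eta,\gamma}\supset E$ whose boundary is a finite union of $C^2$ submanifolds and which enjoys the quantitative controls in \eqref{eq: partition-new}, in particular $\mathrm{dist}_\H(E,E_{h,\eta,\gamma})\le h\eta\gamma^{1/2}$. Thanks to this controlled thickening and to the choice of $\rho$-shrunk cuboid $\hat Q_{h,\rho}(i)$, each connected component $U_j$ of $\hat Q_{h,\rho}(i)\setminus\overline{E_{h,\eta,\gamma}}$ is a John domain with John constant depending only on $\rho$ and $\eta$, so that the classical Friesecke–James–Müller rigidity \cite{friesecke2002theorem} is applicable componentwise with a uniformly controlled constant.

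The second step is to apply \cite[Theorem 3.1]{KFZ:2023} on $\hat Q_h(i)$ to the restriction of $y$. After rescaling $\hat Q_h(i)\to (-\tfrac32,\tfrac32)^2\times(-\tfrac12,\tfrac12)$ by a factor $1/h$, this yields, on each component $U_j$, a rotation $R_j\in SO(3)$ and a translation $b_j\in\R^3$ satisfying estimates \eqref{eq: main rigitity}(ii) and \eqref{eq: main rigitity}(iii). Undoing the scaling produces the factor $h^{-2}$ appearing in (iii) through the Poincaré-type term, while the powers $\gamma^{-3}$ and $\gamma^{-5}$ track the dependence on the geometric constants of the possibly slender components $U_j$ (their shrinking diameters and John constants are controlled through $\eta$ and through the thickening scale $h\eta\gamma^{1/2}$ of Proposition \ref{prop:setmodification}), exactly as in \cite{KFZ:2023}.

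The third step is to upgrade to the sharper symmetric-part estimate \eqref{eq: main rigitity}(i). The idea is the standard linearization around $SO(3)$ first exploited in \cite{friesecke2002theorem}: writing $F=(R_j)^T\nabla y-\mathrm{Id}$, the frame indifference and smoothness of $\mathrm{dist}^2(\cdot,SO(3))$ in a neighborhood of the identity give
\begin{equation*}
\mathrm{dist}^2(\nabla y,SO(3))\ \ge\ |\mathrm{sym}\,F|^2-C|F|^3\,,
\end{equation*}
on the subset $\{|F|\le\delta\}$ for a small absolute $\delta>0$. Integrating over $U_j$, using (ii) to control $\|F\|_{L^2}^2$ and a Sobolev/$L^\infty$-type interpolation to bound the $|F|^3$-error by $(\gamma^{-15/2}h^{-3}\varepsilon)\int|F|^2$ on the good set, together with a crude use of (ii) again on the bad set $\{|F|>\delta\}$, produces the multiplicative factor $1+C_\eta\gamma^{-15/2}h^{-3}\varepsilon$ in front of $\varepsilon$, which is precisely \eqref{eq: main rigitity}(i). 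This is the same Taylor expansion argument as in \cite[Thm 3.1]{KFZ:2023}, so the present statement amounts to a specialization of that argument to cubic pieces.

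The main obstacle, which is however already resolved in \cite{KFZ:2023}, is keeping track of the quantitative dependence on the thickening parameter $\eta$ and on $\gamma$ uniformly across all cubes $Q_h(i)\in\mathcal Q_h$, especially in verifying that the resulting John/geometric constants of the components $U_j$ depend only on $\rho,\eta$ (and not on $h$ or on the particular geometry of $E$ inside $\hat Q_h(i)$). Once this bookkeeping is in place, the three estimates follow by a direct substitution into the corresponding statements of \cite[Theorem 3.1]{KFZ:2023}.
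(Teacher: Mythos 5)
Your overall strategy coincides with the paper's: the paper gives no independent proof of this theorem, stating explicitly that it ``follows by a direct application'' of \cite[Theorem 3.1]{KFZ:2023} (after the thickening of Proposition \ref{prop:setmodification}), localized to the cuboid $\hat Q_h(i)$ and rescaled — which is exactly your reduction. One caveat on your side remarks: the claim that each component $U_j$ of $\hat Q_{h,\rho}(i)\setminus\overline{E_{h,\eta,\gamma}}$ is a John domain with constant depending only on $\rho$ and $\eta$, so that \cite{friesecke2002theorem} applies componentwise with a uniform constant, misrepresents the mechanism of \cite{KFZ:2021, KFZ:2023}: the components can remain highly irregular even after thickening, and the rigidity constant there is obtained by a covering/merging argument whose constant genuinely degenerates in $\gamma$ (hence the factors $\gamma^{-3}$, $\gamma^{-5}$, $\gamma^{-15/2}$, which would not appear if a $(\rho,\eta)$-uniform John constant were available). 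Since you ultimately defer to the cited theorem rather than to this heuristic, the reduction itself stands, but that explanatory paragraph should not be read as a proof of the cited result.
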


In the subsequent proofs we will choose the parameters $\eta$ and  $\gamma$ depending on the regime of the elastic energy $\eps$ such that $C_\eta \gamma^{-15/2}h^{-3}\varepsilon \le 1$ and $C_\eta  \gamma^{-5}  \le \eps^{-\theta}$ for some $\theta >0$ small. With these choices, we obtain a sharp control on symmetrized gradients with repect to $\eps$, \OOO see \EEE \eqref{eq: main rigitity}$\rm{(i)}$, while the estimate in \eqref{eq: main rigitity}$\rm{(ii)}$ and the Poincar\'e-type estimate \eqref{eq: main rigitity}$\rm{(iii)}$ yield a suboptimal control in the exponent, of the order $\eps^{1-\theta}$.  

\medskip 
 
\emph{Korn and Poincar\'e inequalities \OOO in $SBV^2$\EEE:} As in \cite{KFZ:2023}, the issue of the suboptimal exponent in the gradient estimate can be remedied in case the surface area of the void set is small. This relies on sophisticated Korn and Poincar\'e inequalities in the space  $GSBD^2$ \cite{DalMaso:13}. \MMM Since  \EEE we will here need the results only for $SBV^2$-functions, we formulate the corresponding \OOO statements \EEE of \cite[Theorem~1.1, Theorem~1.2]{Cagnetti-Chambolle-Scardia} in a simplified setting. \MMM In the sequel, we call \EEE a mapping $a\colon \R^{3} \to \R^{3}$  an \emph{infinitesimal rigid motion} if $a$ is affine with ${\rm sym}(\nabla a)= 0$.

\begin{theorem}[Korn-Poincar{\'e} inequality for functions with small jump set]\label{th: kornSBDsmall}
Let $U \subset \R^{d}$ be a bounded Lipschitz domain. Then, there exists a constant $c = c(U,d)>0$ such that for all  $u \in SBV^2(U;\R^{d})$  there  exists  a set of finite perimeter $\omega \subset U$ with 
\begin{align}\label{eq:R2main}
\mathcal{H}^{d-1}(\partial^* \omega) \le c\mathcal{H}^{d-1}(J_u)\,, \ \ \ \ \mathcal{L}^{d}(\omega) \le c(\mathcal{H}^{d-1}(J_u))^{d/d-1}\,,
\end{align}
and an infinitesimal rigid motion $a$ such that
\begin{align*}
 (\mathrm{diam}(U))^{-1}\Vert u - a \Vert_{L^{2}(U \setminus \omega)} + \Vert \nabla u - \nabla a \Vert_{L^{2}(U \setminus \omega)}\le c \Vert {\rm sym}(\nabla u) \Vert_{L^2(U)}\,.
\end{align*}
Moreover, there exists $v \in W^{1,2}(U;\R^{d})$ such that $v\equiv  u$ on $U \setminus \omega$ and
\begin{align*}
\Vert {\rm sym}(\nabla v) \Vert_{L^2(U)} \le c \Vert {\rm sym}(\nabla  u)\Vert_{L^2(U)}\,.
\end{align*}
Furthermore, if $u \in L^\infty(U;\R^{d})$, one has  $\|v\|_{L^\infty(U)}\leq  \| u \|_{L^\infty(U)}$.  
\end{theorem}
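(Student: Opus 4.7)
The plan is to establish the theorem in two steps: first produce the exceptional set $\omega$ satisfying \eqref{eq:R2main} together with the infinitesimal rigid motion $a$ controlling both $u$ and $\nabla u$ on $U\setminus\omega$, and then construct the Sobolev replacement $v$ on all of $U$ together with its $L^\infty$ bound.

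First, I would set up a dyadic covering of $U$ by cubes $\{Q_\alpha\}$ at a scale $\delta$ to be chosen later in terms of $\mathcal{H}^{d-1}(J_u)$. A cube $Q_\alpha$ is declared \emph{bad} if $\mathcal{H}^{d-1}(J_u\cap Q_\alpha) \geq \theta\delta^{d-1}$ for a fixed threshold $\theta\in(0,1)$, and \emph{good} otherwise. Taking $\omega$ to be (a rectifiable enlargement of) the union of bad cubes, a simple counting argument gives at most $C\theta^{-1}\delta^{-(d-1)}\mathcal{H}^{d-1}(J_u)$ bad cubes, which translates into $\mathcal{L}^d(\omega)\le C\delta\,\mathcal{H}^{d-1}(J_u)$ and $\mathcal{H}^{d-1}(\partial^*\omega)\le C\mathcal{H}^{d-1}(J_u)$. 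Choosing $\delta\sim (\mathcal{H}^{d-1}(J_u))^{1/(d-1)}$ then yields both bounds in \eqref{eq:R2main}.

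On each good cube $Q_\alpha$ the smallness of $J_u\cap Q_\alpha$ relative to $\delta^{d-1}$ allows for a Chambolle–De Giorgi-type approximation, erasing the jump locally and producing a $W^{1,2}$ map to which the classical Korn inequality applies, yielding an infinitesimal rigid motion $a_\alpha$ with $\|\nabla u - \nabla a_\alpha\|_{L^2(Q_\alpha)}\le c\|\mathrm{sym}(\nabla u)\|_{L^2(Q_\alpha)}$. The local motions then have to be matched into one global motion $a$ via a chaining argument along adjacent good cubes: the difference $a_\alpha - a_\beta$ across a shared face is estimated by a local Poincar\'e inequality applied to $\mathrm{sym}(\nabla u)$, and summing along paths in the connectivity graph of good cubes propagates the bound to all of $U\setminus\omega$, yielding the claimed estimate with an extra factor of $\mathrm{diam}(U)$. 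Good cubes that fail to connect to the main cluster are absorbed into $\omega$, which does not worsen the bounds in \eqref{eq:R2main} up to constants. I expect this chaining/matching step to be the main obstacle, since in the $(G)SBD$ setting standard Sobolev tools for gluing approximations are not directly available, and one must argue using only ${\rm sym}(\nabla u)$ rather than the full gradient; this is the technical heart of \cite{Cagnetti-Chambolle-Scardia}.

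For the Sobolev replacement $v$, set $v\equiv u$ on $U\setminus\omega$ and extend across $\partial\omega\cap U$ by interpolating between $u$ and the global rigid motion $a$ via a partition of unity in a thin neighborhood of $\partial\omega$. Since $a$ is affine, the extension lies in $W^{1,2}(U;\R^d)$ and its symmetrized gradient is bounded by $c\|\mathrm{sym}(\nabla u)\|_{L^2(U)}$, thanks to the matching estimate $\|\nabla u-\nabla a\|_{L^2(U\setminus\omega)}\le c\|\mathrm{sym}(\nabla u)\|_{L^2(U)}$ from the previous step. Finally, the $L^\infty$ bound is preserved by a componentwise truncation of $v$ at $\pm\|u\|_{L^\infty(U)}$, which cannot increase $\|\mathrm{sym}(\nabla v)\|_{L^2(U)}$.
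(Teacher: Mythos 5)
You should first note that the paper does not prove this statement at all: Theorem~\ref{th: kornSBDsmall} is quoted (in a simplified $SBV^2$ form) from \cite[Theorems 1.1 and 1.2]{Cagnetti-Chambolle-Scardia}, and Remark~\ref{eq:easy_rem} merely points to the discussion in \cite{KFZ:2023} for how this exact formulation follows from that reference. So there is no paper-internal argument to compare with; what you have written is an attempt to reconstruct the proof of the cited result. Your outline does capture the correct global strategy of Cagnetti--Chambolle--Scardia (dyadic covering, good/bad cubes selected by the density of $J_u$, local Korn estimates, chaining of local infinitesimal rigid motions), and the counting that yields \eqref{eq:R2main} with $\delta\sim(\mathcal{H}^{d-1}(J_u))^{1/(d-1)}$ is fine. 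But as a proof it has genuine gaps. The step ``on each good cube a Chambolle--De Giorgi-type approximation erases the jump and produces a $W^{1,2}$ map to which classical Korn applies'' is essentially the theorem itself localized to a cube: producing a Sobolev replacement whose \emph{symmetrized} gradient is controlled by $\Vert\mathrm{sym}(\nabla u)\Vert_{L^2}$, with an exceptional set of the right size, is precisely the hard content of \cite{Cagnetti-Chambolle-Scardia}, and you invoke it rather than prove it. Likewise the chaining step, which you correctly identify as the technical heart, is left entirely open.

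Two of your concluding steps are moreover incorrect as stated. First, defining $v$ by interpolating between $u$ and the single global rigid motion $a$ in a thin collar around $\partial\omega$ does not give the claimed bound: the cross term $\nabla\varphi\otimes(u-a)$ contributes $\Vert u-a\Vert_{L^2}/(\text{collar width})$, and since $\Vert u-a\Vert_{L^2(U\setminus\omega)}$ is only controlled at scale $\mathrm{diam}(U)$, this blows up unless the collar is comparable to $\mathrm{diam}(U)$. The actual construction must be done cube by cube, interpolating with the \emph{local} rigid motions at the scale of the local Poincar\'e inequality. Second, the claim that componentwise truncation of $v$ at $\pm\Vert u\Vert_{L^\infty}$ cannot increase $\Vert\mathrm{sym}(\nabla v)\Vert_{L^2}$ is false: truncating the $i$-th component replaces row $i$ of $\nabla v$ by zero on the truncation set, which decreases $|\nabla v|$ pointwise but can strictly increase $|\mathrm{sym}(\nabla v)|$ (take $\partial_j v_i=-\partial_i v_j$ large, so the off-diagonal symmetric entry is $0$ before truncation and $\tfrac12|\partial_i v_j|$ after). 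The $L^\infty$ bound must instead be built into the construction of $v$ on $\omega$, e.g.\ by defining the replacement there through local averages or values of $u$ itself, so that $v$ never leaves the range bound of $u$.
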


\begin{remark}\label{eq:easy_rem}
We refer the reader to the discussion below \cite[Theorem 3.2]{KFZ:2023} on the derivation of the above theorem from the results in \cite{Cagnetti-Chambolle-Scardia}. Note that the result is indeed only relevant if $\mathcal{H}^{d-1}(J_u)$ is small\OOO, \EEE since otherwise $\omega= U$ is possible and the statement is empty. Moreover, it is easily seen that the constant $c=c(U,d)$ of Theorem \ref{th: kornSBDsmall} is invariant under dilations of the domain $U$.
\end{remark}

\EEE An easy consequence of the above theorem is a version of the Poincar{\'e} inequality in $SBV$ in arbitrary codimension, namely the following. 

\begin{corollary}
\label{replace-cor}
Let $U \subset \R^{d}$ be a bounded Lipschitz domain and $m \in \N$. Then, there exists a constant $c = c(U,d,m)>0$ such that for all  $u \in SBV^2(U;\R^{m})$  there  exists  a set of finite perimeter $\omega \subset U$ with 
\begin{align}\label{eq:omega}
\mathcal{H}^{d-1}(\partial^* \omega) \le c\mathcal{H}^{d-1}(J_u)\,, \ \ \ \ \mathcal{L}^{d}(\omega) \le c(\mathcal{H}^{d-1}(J_u))^{d/(d-1)}\,,
\end{align}
and a constant vector $b \in \R^m$ such that
\begin{align}\label{eq:omega2}
\big(\mathrm{diam}(U)\big)^{-1}\Vert u - b \Vert_{L^{2}(U \setminus \omega)}  \le c  \Vert \nabla u \Vert_{L^2(U)}\,.
\end{align}
\end{corollary}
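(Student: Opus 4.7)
The idea is to reduce the statement to Theorem~\ref{th: kornSBDsmall} by a componentwise lifting that turns each scalar component of $u$ into an $\R^d$-valued map whose symmetric gradient is controlled by the full gradient. For each $k=1,\dots,m$, I set $\phi_k := u_k e_1 \in SBV^2(U;\R^d)$, so that $J_{\phi_k}\subseteq J_u$, $\nabla \phi_k = e_1 \otimes \nabla u_k$, and a direct computation gives $|\mathrm{sym}(\nabla \phi_k)|^2 \le |\nabla u_k|^2$ pointwise. Applying Theorem~\ref{th: kornSBDsmall} to $\phi_k$ produces a set $\omega_k \subset U$ satisfying \eqref{eq:R2main} (with $J_{\phi_k}\subseteq J_u$) and an infinitesimal rigid motion $a_k(x)=A_k x+\beta_k$ with $A_k=-A_k^T$ such that
\[
\big(\diam(U)\big)^{-1}\|\phi_k-a_k\|_{L^2(U\setminus\omega_k)}+\|\nabla\phi_k-A_k\|_{L^2(U\setminus\omega_k)} \le c\|\nabla u_k\|_{L^2(U)}.
\]

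The crucial observation is that the last $d-1$ rows of $\nabla\phi_k = e_1\otimes\nabla u_k$ vanish, so the above $L^2$-bound forces the corresponding rows of the constant matrix $A_k$ to satisfy $|(A_k)_{i,\cdot}|^2\,\L^d(U\setminus\omega_k)\le c\|\nabla u_k\|^2_{L^2(U)}$ for every $i\ge 2$. Since $A_k$ is skew-symmetric and $(A_k)_{11}=0$, the first row is determined by the other rows through $(A_k)_{1,j}=-(A_k)_{j,1}$, so that also $|(A_k)_{1,\cdot}|^2\,\L^d(U\setminus\omega_k)\le c\|\nabla u_k\|^2_{L^2(U)}$. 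Extracting the first scalar component of the bound for $\phi_k-a_k$ and absorbing the affine part via
\[
\|(A_k)_{1,\cdot}\cdot x\|_{L^2(U\setminus\omega_k)} \le |(A_k)_{1,\cdot}|\,\diam(U)\,\L^d(U\setminus\omega_k)^{1/2}\le c\,\diam(U)\,\|\nabla u_k\|_{L^2(U)}
\]
then yields the scalar Poincar\'e-type estimate $\|u_k-b_k\|_{L^2(U\setminus\omega_k)}\le c\,\diam(U)\,\|\nabla u_k\|_{L^2(U)}$ with $b_k:=(\beta_k)_1$. Note that the factors $\L^d(U\setminus\omega_k)^{\pm 1/2}$ cancel exactly, so no lower bound on $\L^d(U\setminus\omega_k)$ is required (and the degenerate case $\omega_k=U$ makes the estimate trivially true).

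Finally, I set $\omega:=\bigcup_{k=1}^m\omega_k$ and $b:=(b_1,\dots,b_m)\in\R^m$. Since $U\setminus\omega\subseteq U\setminus\omega_k$ for each $k$, summing the $m$ scalar estimates gives \eqref{eq:omega2}, while subadditivity of $\H^{d-1}(\partial^*\cdot)$ and $\L^d$ produces \eqref{eq:omega} with a constant depending additionally on $m$. I expect the main obstacle to be the absorption of the affine part of the Korn rigid motion $a_k$ into a pure constant translation $b_k$; this is resolved precisely by the skew-symmetry cancellation described above, which is the only step that goes beyond a direct componentwise application of Theorem~\ref{th: kornSBDsmall}.
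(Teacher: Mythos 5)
Your argument is correct, and it rests on the same key lemma as the paper's proof, namely the Korn--Poincar\'e inequality of Theorem~\ref{th: kornSBDsmall}; the difference lies only in how the $\R^m$-valued function is reduced to the $\R^d$-valued setting. The paper pads $u$ so that $m\ge d$, applies Theorem~\ref{th: kornSBDsmall} to every projection $\pi_\sigma u$ onto a $d$-tuple of components, and then averages the resulting constants over all $\binom{m}{d}$ multi-indices, using that $u$ is a fixed multiple of $\sum_\sigma \pi_\sigma u$. You instead lift each scalar component separately via $\phi_k=u_k e_1$, which gives $m$ applications of the theorem and no averaging; this also avoids the padding step when $m<d$. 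Both routes must discard the affine part of the infinitesimal rigid motion produced by Theorem~\ref{th: kornSBDsmall} (the theorem yields a rigid motion $a$, not a constant), and here your write-up is more explicit than the paper's, which simply asserts the constant-vector version: your observation that the rows $i\ge 2$ of $\nabla\phi_k=e_1\otimes\nabla u_k$ vanish, combined with skew-symmetry of $A_k$, yields $|A_k|\,\L^d(U\setminus\omega_k)^{1/2}\le c\Vert\nabla u_k\Vert_{L^2(U)}$ and lets the affine part be absorbed with the correct $\diam(U)$ scaling. In fact, the simpler triangle inequality $|A_k|\,\L^d(U\setminus\omega_k)^{1/2}\le\Vert\nabla\phi_k-A_k\Vert_{L^2(U\setminus\omega_k)}+\Vert\nabla\phi_k\Vert_{L^2(U\setminus\omega_k)}$ already suffices, since the full gradient appears on the right-hand side of \eqref{eq:omega2}; this is implicitly what the paper does for $\pi_\sigma u$. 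The only cosmetic point in your argument is that the bound $|x|\le\diam(U)$ for $x\in U$ requires normalizing the origin to lie in $\overline{U}$, which is harmless since the resulting translation is absorbed into $\beta_k$.
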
 

\begin{proof}
 The statement  is a simple consequence of Theorem \ref{th: kornSBDsmall}.
Without restriction we can assume that   $m \ge d$, as otherwise we add $(d-m)$-components to $u$ which are identically zero.   \MMM We \EEE denote by $\Sigma$ the collection of \OOO all strictly increasing multi-indices of length $d$ from $\lbrace 1,\ldots,m\rbrace$\EEE, so that $\#\Sigma=\binom{m}{d}$. For every $\sigma \in \Sigma$ and $t \in \R^m$, we denote by $\pi_\sigma t \in \R^{\OOO m\EEE}$ the \OOO orthogonal \EEE  projection of $t$ onto the components indicated by $\sigma$. \OOO 
In \EEE a similar fashion, we define  $\pi_\sigma u \colon U \to \R^{\OOO m\EEE}$. We apply Theorem \ref{th: kornSBDsmall} on $\pi_\sigma u$ \OOO(which is essentially $\R^d$-valued) \EEE to obtain $\omega_\sigma \subset U$ satisfying \eqref{eq:omega} (for $\omega_\sigma$ in place of $\omega$) and $\OOO b_\sigma \EEE\in \R^{\OOO m\EEE}$ 
such that
\begin{align*}
\big(\mathrm{diam}(U)\big)^{-1}\Vert \pi_\sigma u - \OOO b_\sigma \EEE \Vert_{L^{2}(U \setminus \omega_\sigma)}  \le \OOO c\EEE \Vert \nabla u \Vert_{L^2(U)}\,.
\end{align*}
We define $\omega := \bigcup_{\sigma \in \Sigma} \omega_\sigma$ and observe that \eqref{eq:omega} holds. 
\OOO Defining \EEE
\[b := \frac{1}{\binom{m}{d-1}}\sum_{\sigma \in \Sigma} b_\sigma\in \R^{\OOO m \EEE}\,,\] it can be easily verified that \eqref{eq:omega2} holds, noticing that also 
$u= \frac{1}{\binom{m}{d-1}}\sum_{\sigma \in \Sigma} \pi_\sigma u$. 
\end{proof}

As in Theorem \ref{th: kornSBDsmall}, we emphasize that the application of the previous corollary is only meaningful if $\mathcal{H}^{d-1}(J_u)$ is smaller than a sufficiently small constant depending on $U$.

\EEE
\medskip

\emph{Difference of affine maps:} We close this subsection with the statement of the following elementary lemma, cf.~\cite[Lemma 3.1]{KFZ:2023}, whose proof can be found therein. By $B_r(x) \subset \R^3$ we denote the open ball  centered at $x \in \R^3$ with radius $ r>0$. 
\begin{lemma}[Estimate on  affine maps]\label{lemma: rigid motions}
Let $\delta >0$. Then there exists a constant $C>0$ only depending on  $\delta$ such for every $G \in \R^{3\times  3}$, $b \in \R^{3}$, $x\in \R^3$, and $E \subset B_r(x)$ for some $r >0$ with $\mathcal{L}^{ 3}(E) \ge \delta r^{ 3}$\BBB,\EEE we have
$$\Vert G \cdot  + b \Vert_{L^\infty(B_r(x))} \le C\OOO r^{-3/2} \EEE \Vert G \cdot  + b \Vert_{L^2(E)}\,, \quad |G| \le C \OOO r^{-5/2}\EEE \Vert G \cdot  + b \Vert_{L^2(E)}\,.   $$
\end{lemma}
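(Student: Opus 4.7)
The plan is to first reduce by an affine change of variables to the normalized case $x=0$, $r=1$. Setting $z:=(y-x)/r$, $\tilde E := (E-x)/r \subset B_1(0)$ (so $\mathcal{L}^3(\tilde E) \ge \delta$), $\tilde G := rG$, and $\tilde b := Gx+b$, one has $Gy+b = \tilde G z + \tilde b$, while $dy = r^3\, dz$, $|G| = r^{-1}|\tilde G|$, and $\|Gy+b\|_{L^\infty(B_r(x))} = \|\tilde G z + \tilde b\|_{L^\infty(B_1(0))}$. A direct bookkeeping of these factors shows that the two asserted inequalities are equivalent to
\[
\|\tilde G z+\tilde b\|_{L^\infty(B_1(0))} \le C\,\|\tilde G z+\tilde b\|_{L^2(\tilde E)}\,, \qquad |\tilde G| \le C\,\|\tilde G z+\tilde b\|_{L^2(\tilde E)}\,,
\]
with a constant $C$ depending only on $\delta$. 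Thus it suffices to prove the statement in this normalized form.

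Next, I would observe that the space $\mathcal{V}$ of affine maps $\R^3\to\R^3$ is a $12$-dimensional vector space, so all norms on it are equivalent. In particular, there exists a universal constant such that both $\|\tilde G\cdot+\tilde b\|_{L^\infty(B_1(0))}$ and $|\tilde G|$ are bounded by $C\,(|\tilde G|^2+|\tilde b|^2)^{1/2}$. Consequently, both required inequalities follow from a single uniform lower bound of the form
\[
\|f\|_{L^2(\tilde E)} \ge c_\delta\,\bigl(|\tilde G|^2+|\tilde b|^2\bigr)^{1/2} \qquad \text{for every } f(z) = \tilde G z + \tilde b \in \mathcal V,
\]
valid uniformly over all measurable $\tilde E \subset B_1(0)$ with $\mathcal{L}^3(\tilde E) \ge \delta$.

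I would establish this lower bound by a compactness-contradiction argument. If it failed, there would exist sequences $f_n(z)=\tilde G_n z + \tilde b_n$ with $|\tilde G_n|^2 + |\tilde b_n|^2 = 1$ and measurable sets $\tilde E_n \subset B_1(0)$ with $\mathcal L^3(\tilde E_n)\ge \delta$ satisfying $\|f_n\|_{L^2(\tilde E_n)}\to 0$. By finite-dimensionality, up to a subsequence $(\tilde G_n,\tilde b_n)\to(\tilde G_\infty,\tilde b_\infty)$ with $|\tilde G_\infty|^2+|\tilde b_\infty|^2=1$, so that $f_n\to f_\infty$ uniformly on $B_1(0)$; by Banach--Alaoglu, $\chi_{\tilde E_n}\wsto g$ in $L^\infty(B_1(0))$ with $0\le g\le 1$ and $\int_{B_1(0)} g\, dz \ge \delta$ (by testing against the constant $1$). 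Passing to the limit in $\int_{B_1(0)} |f_n|^2 \chi_{\tilde E_n}\, dz$, using uniform convergence of the integrand, yields $\int_{B_1(0)} |f_\infty|^2\, g\, dz=0$, so $f_\infty$ vanishes on $\{g>0\}$, a set of positive Lebesgue measure. Since an affine map that vanishes on a set of positive measure in $\R^3$ is identically zero, this contradicts $|\tilde G_\infty|^2+|\tilde b_\infty|^2=1$.

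The only slightly delicate point is the passage to the limit under the weak-$*$ convergence of $\chi_{\tilde E_n}$; however, since $|f_n|^2$ converges to $|f_\infty|^2$ uniformly on $B_1(0)$ (and hence strongly in $L^1$), the product converges as the pairing of a strongly convergent sequence with a weakly-$*$ convergent one. Everything else is linear algebra combined with the elementary fact that nontrivial polynomial (here affine) functions have zero set of Lebesgue measure zero. No novel ingredient is needed.
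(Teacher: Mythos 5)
Your proof is correct. Note first that the paper itself does not prove this lemma: it is stated with the remark that the proof ``can be found'' in the cited reference \cite[Lemma 3.1]{KFZ:2023}, so there is no in-paper argument to compare against line by line. Your route --- normalize to $x=0$, $r=1$ by an affine change of variables (your bookkeeping of the factors $r^{3/2}$ and $r^{-1}$ is right), reduce both inequalities to a single uniform lower bound $\Vert \tilde G z+\tilde b\Vert_{L^2(\tilde E)}\ge c_\delta(|\tilde G|^2+|\tilde b|^2)^{1/2}$, and establish that bound by compactness-contradiction using weak-$*$ convergence of $\chi_{\tilde E_n}$ --- is complete and sound. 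The two delicate points are handled correctly: the pairing of the uniformly (hence $L^1$-strongly) convergent $|f_n|^2$ with the weak-$*$ convergent indicators passes to the limit, and the limit set $\{g>0\}$ has measure at least $\delta$ because $0\le g\le 1$ and $\int g\ge\delta$, so the vanishing of the limiting affine map on a set of positive measure forces $\tilde G_\infty=\tilde b_\infty=0$, contradicting the normalization. The only trade-off worth mentioning is that a compactness argument yields a non-explicit constant $c_\delta$; a more quantitative proof (e.g., via Chebyshev and the observation that sublevel sets of a nondegenerate affine map are slabs of controlled volume) would give an explicit dependence on $\delta$, but nothing in the statement or its use in the paper requires that.
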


\subsection{Local rigidity estimates and Sobolev replacement on good cubes}\label{sec: locest}

In this subsection we first introduce some extra necessary notation and definitions for the rest of the section. We start by introducing the thickened void sets and then partition our reference domain $\Omega_{h, \rho}$,  see \EEE \eqref{Omega_h_local} into cubes, where the partitioning is with respect to the surface area of the boundary of the thickened void and the size of the local elastic energy \OOO in each cuboid of the partition\EEE.

Let us start with a sequence $(v_h,E_h)_{h>0}$ of admissible deformations and void sets in the  thin plate $\Omega_h$. Recalling \eqref{eq: G energy}, we suppose that 
\begin{equation}\label{uniform_energy_bound}
\sup_{h>0}  \mathcal{G}^{h}(v_h,E_h)< +\infty\,.
\end{equation}
We fix 
\begin{equation}\label{eq:choice_of_rho}
0<\rho \le \rho_0:= 1 -(127/128)^{1/3}
\end{equation} 
as in Proposition \ref{Lipschitz_replacement} and recall the choice of \OOO$(\kappa_h)_{h>0}$ \EEE in \eqref{rate_1_gamma_h}.  
Let $\eta_0 = \eta_{0}(\rho) \in (0,1)$ be the \MMM minimum of the  constants in Proposition \ref{prop:setmodification} and Theorem~\ref{prop:rigidity}. \EEE  In view of \eqref{rate_1_gamma_h}, we can choose a sequence $(\eta_h)_{h >0} \subset (0,\eta_0)$ converging slowly enough to zero, so that the constant $C_{\eta_h}$ in \eqref{eq: main rigitity}, obtained by applying Theorem~\ref{prop:rigidity} for $\rho$, $\eta = \eta_h \in (0,\eta_0(\rho))$, and $\gamma = {\kappa}_h/h^2$, satisfies  
\begin{equation}\label{parameters_for_uniform_bounds}
\limsup_{h\to 0 } \,   C_{\eta_h}\Big(\frac{h^2}{{\kappa}_h}\Big)^{5} h^{2/5} \leq 1\,.
\end{equation}
\MMM Applying \EEE Proposition \ref{prop:setmodification} with   the above choice of $\rho$, $\eta$ and $\gamma$\EEE, for all $h>0$ we find open sets $E^*_h$ with $E_{h} \subset E^*_h \subset \Omega_h$ such that $\partial E^*_h\cap \Omega_{h}$ is a union of finitely many $C^2$-regular submanifolds and 
\begin{align}\label{eq: void-new}
\begin{split}
{\rm{(i)}} & \ \   h^{-2} \mathcal{L}^3(E^*_h\setminus E_{h}) \to 0,  \quad \quad  h^{-1}\dist_\mathcal{H}(E^*_h,E_h) \to 0 \ \ \text{ as $h \to 0$}\,,
\\[2pt]
{\rm{(ii)}} & \ \    \liminf_{h \to 0 }    h^{-1}  \H^2(\partial E^*_h\cap \Omega_h)\leq    \liminf_{h \to 0 } \, h^{-1}\mathcal{G}^{\kappa_h}_{\rm surf}(E_h;\Omega_h) \,.
\end{split}
\end{align}
Here, we made use of \eqref{eq: partition-new}, $ \eta_h \to 0$, \eqref{rate_1_gamma_h}, and the fact that $h^{-1} \mathcal{G}_{\rm surf}^{\gamma h^2}(E_h;\Omega_h) = h^{-1}\mathcal{G}_{\rm surf}^{ {\kappa}_h}(E_h;\Omega_h) $ is uniformly bounded by  \eqref{eq: G energy} and \eqref{uniform_energy_bound}. In the estimates \eqref{eq: main rigitity}, the behavior of the deformation inside $E_h^*$  cannot be controlled. Thus, in accordance to \eqref{initial_admissible_configurations}, for definiteness only we can without restriction assume that the deformation is the identity also inside $E_h^*$, i.e., we define  $v_h^*\colon \Omega_h \to \R^3$ by
\begin{align}\label{eq: *mod}
v_h^*(x) := \begin{cases}  v_h(x) &  \text{ if } x \in \Omega_h \setminus E_h^*\,, \\
\OOO x & \text{ if } x \in E_h^*\,.
\end{cases}
\end{align}
In view of \eqref{initial_admissible_configurations} and \eqref{eq: void-new}(i), we get 
\begin{align}\label{eq: *mod2}
h^{-2} \mathcal{L}^3( \lbrace  v_h^* \neq v_h \rbrace ) \le h^{-2}  \mathcal{L}^3(E^*_h\setminus E_{h}) \to 0 \quad \text{ as $h \to 0$}\,.
\end{align}
In view of Remark \ref{eq:easy_rem}, set $c_\mathrm{KP}:=c(\hat Q_1(0))$ and also introduce the (small) parameter
\begin{align}\label{eq: T2}
\alpha \OOO:=\EEE \Big(\frac{128}{9} \max\{\OOO 2\EEE c_{\mathrm{\OOO isop\EEE}}, c_\mathrm{KP}\}\Big)^{-2/3}\,,
\end{align}
where $c_{\rm{\OOO isop\EEE}}$ denotes the relative isoperimetric constant of the cuboid $\hat Q_1(0)$ in $\R^3$, the latter being also scaling invariant, cf.~\cite[Equation  (3.43)]{Ambrosio-Fusco-Pallara:2000} for a version stated on balls instead of cuboids. 
For every $Q_h(i)\in \Q_h$, we also introduce the localized elastic energy
\begin{equation}\label{eq: localized_elastic}
\eps_{i,h}:=\int_{\hat Q_h(i)\setminus \overline{E_h}} \mathrm{dist}^2(\nabla v_h, SO(3))\,\mathrm{d}x\,.
\end{equation}
We now divide the family of cubes $\mathcal{Q}_h$ into two subfamilies: first,  we consider  the family  of indices associated to \textit{good cubes}, defined by
\begin{equation}\label{good_cuboids}
 I^h_{\rm g} :=\Big\{i\in \Z^2:Q_h(i)\in \Q_h, \ 
\H^2(\partial E^*_h\cap  \hat Q_{h,\rho}(i))\leq \alpha h^2\,, \ \eps_{i,h}\leq h^4
\Big\}\,.
\end{equation} 
For each cube in this subfamily, Theorem \ref{th: kornSBDsmall}\ is applicable without introducing a too large exceptional set, cf.~\eqref{eq:R2main}. The complementary family of indices will correspond to the \emph{bad cubes}, namely we set 
\begin{align*}
 I^h_{\rm b} := \lbrace i\in \Z^2\setminus I^h_{\OOO \rm{g}\EEE}\colon Q_h(i)\in \Q_h\rbrace\,. 
\end{align*}
We also remark that, for each $i\in \Z^2$,  
\begin{align}\label{ 5number}
\#\big\{i'\in \Z^2\colon \hat Q_{h,\rho}(i) \cap \hat Q_{h,\rho}(i')\neq \emptyset\big\}\leq 25\,.
\end{align}
 By \eqref{good_cuboids}--\eqref{ 5number}, \eqref{eq: void-new}(ii), \eqref{eq: nonlinear energy}(iv), \eqref{eq: G energy}, and \eqref{uniform_energy_bound}, for $h>0$ small enough, we obtain
\begin{align*}
\#I^h_{\rm b} &\leq \alpha^{-1}h^{-2} \sum_{i \in I^h_{\rm b}} \H^2(\partial E^*_h\cap  \hat Q_{h,\rho}(i))+h^{-4}\sum_{i\in I_{\rm{b}}^h}\eps_{i,h} \\
&\leq C\alpha^{-1}h^{-2} \mathcal{H}^2(\partial E^*_h\cap \Omega_h)+Ch^{-4}\int_{\Omega_h\setminus \overline{E}}W(\nabla v_h)\,\mathrm{d}x\leq Ch^{-1}\mathcal{G}^h(v_h,E_h)\leq Ch^{-1}\,,
\end{align*}
i.e., we deduce that
\begin{equation}\label{cardinality_of_bad_cuboids}
\# I^h_{\rm b}  \leq Ch^{-1}
\end{equation} \EEE
for an absolute constant $C=C(\alpha) >0$, independent of $h$.  Moreover, by using again \eqref{eq: nonlinear energy}(iv), \OOO \eqref{ 5number}, \eqref{eq: G energy}, and \eqref{uniform_energy_bound}\EEE, we also obtain the estimate 
\begin{equation}\label{localized_elastic_energy_on_good_cuboids}
\sum_{i\in I^h_{\rm{g}}\cup I^h_{\rm{b}}}\eps_{i,h} \leq C \int_{\Omega_h\setminus \overline{E_h}}\mathrm{dist}^2(\nabla v_h,SO(3))\,\mathrm{d}x\leq  Ch^3\,.
\end{equation}

\begin{proposition}[Local rigidity and Sobolev approximation]\label{summary_estimates_proposition}
Let $0 < \rho \le \rho_0$. There exist an absolute constant $C>0$ independent of $h$ and $h_0=h_0(\rho)>0$ such that for all $0<h\leq h_0$ and for every $i\in  I^h_{\rm g}$ there exists a set of finite perimeter $D_{i,h}\subset  \hat Q_{h,\rho}(i)$, satisfying  
\begin{equation}\label{big_volume_of_dominant_set}  
 \L^3\big(\hat Q_{h,\rho}(i)\setminus D_{i,h} \big)  \leq C h\H^{2}\big(\partial E_h^*\cap \hat Q_{h}(i)\big)\,, \  \L^3(\hat Q_{h}(i) \setminus D_{i,h})\leq \frac{1}{32}\L^3(\hat Q_{h}(i))\,, 
\end{equation}	 
\begin{equation}\label{big_surface_of_dominant_set}  
\ \H^2(\partial^*D_{i,h}\cap \hat Q_{h,\rho}(i))\leq C\H^{2}\big(\partial E_h^*\cap \hat Q_{h,\rho}(i)\big)\,,
\end{equation}
and a corresponding rigid motion $r_{i,h}(x):=R_{i,h}x+ b_{i,h}$, where $R_{i,h}\in SO(3)$ and $b_{i,h}\in \R^3$ with $|b_{i,h}| \le CM$ (recall \eqref{initial_admissible_configurations}), such that 
\begin{equation}\label{L2_gradient_estimates}
h^{-2}\int_{D_{i,h}}\big|v^*_h(x)-r_{i,h}(x)\big|^2\,\mathrm{d}x+\int_{D_{i,h}}\big|\nabla v_h^*(x)- R_{i,h}\big|^2\, \mathrm{d}x\leq
 {C}\eps_{i,h}\,,
\end{equation}
with $v_h^*$ defined as in \eqref{eq: *mod}.

Furthermore, 
\OOO there exists \EEE 
a Sobolev map   $z_{i,h}\in W^{1,2}(\hat Q_{h,\rho}(i);\R^3)$  such that 
\begin{align}\label{properties_of_Sobolev_replacement}
\begin{split}
\rm{(i)}&\quad z_{i,h}\equiv v^*_h \quad \text{on }\ D_{i,h}\,,\\
\rm{(ii)}&\quad h^{-2}\int_{\hat Q_{h,\rho}(i)}\big|z_{i,h}(x)-r_{i,h}(x)\big|^2\,\mathrm{d}x+\int_{\hat Q_{h,\rho}(i)}\big|\nabla z_{i,h}(x)- R_{i,h}\big|^2\, \mathrm{d}x\leq
{C}\eps_{i,h}\,, \\
\rm{(iii)} & \quad \Vert z_{i,h} \Vert_{L^\infty(\hat Q_{h,\rho}(i))} \le  CM\,.
\end{split}
\end{align}
\end{proposition}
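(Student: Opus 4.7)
The strategy exploits the two defining features of a good cube --- the small perimeter $\H^2(\partial E^*_h \cap \hat Q_{h,\rho}(i)) \le \alpha h^2$ and the small elastic energy $\eps_{i,h} \le h^4$ --- to bypass the non-sharp prefactors in Theorem~\ref{prop:rigidity}. First, the relative isoperimetric inequality in $\hat Q_{h,\rho}(i)$ combined with the calibration of $\alpha$ via $c_{\mathrm{isop}}$ in~\eqref{eq: T2} identifies a dominant connected component $U_{j^*}$ of $\hat Q_{h,\rho}(i) \setminus \overline{E^*_h}$ with $\L^3(\hat Q_{h,\rho}(i) \setminus U_{j^*}) \le c(\alpha h^2)^{3/2} \le Ch\,\H^2(\partial E^*_h \cap \hat Q_{h,\rho}(i))$. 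Applying Theorem~\ref{prop:rigidity} with $\gamma = \kappa_h/h^2$ and $\eta = \eta_h$ yields rotations $R_j \in SO(3)$ and vectors $b_j$ on the components $U_j$. The parameter balance~\eqref{parameters_for_uniform_bounds} and the lower bound $\kappa_h \ge h^{52/25}$ implied by~\eqref{rate_1_gamma_h} ensure that the prefactor $C_{\eta_h}(h^2/\kappa_h)^{15/2}h^{-3}\eps_{i,h}$ in~\eqref{eq: main rigitity}$(\rm i)$ is of order $Ch^{2/5} = o(1)$, making the symmetrized-gradient estimate $\sum_j\|{\rm sym}(R_j^T \nabla v^*_h - {\rm Id})\|^2_{L^2(U_j)} \le C\eps_{i,h}$ \emph{sharp}. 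Lemma~\ref{lemma: rigid motions} applied to $R_{j^*}\,{\rm id} + b_{j^*}$ on $U_{j^*}$, together with $\|v^*_h\|_{L^\infty} \le M$, yields $|b_{j^*}| \le CM$.

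Since the full-gradient estimates~\eqref{eq: main rigitity}$(\rm ii)$--$(\rm iii)$ carry the unbounded prefactor $C_{\eta_h}(h^2/\kappa_h)^p$, they cannot alone deliver the sharp $\eps_{i,h}$-bounds required in~\eqref{L2_gradient_estimates}; I therefore upgrade via the $SBV^2$ Korn--Poincar\'e inequality. Gluing $v^*_h$ on $U_{j^*}$ with $R_{j^*}\,{\rm id} + b_{j^*}$ on the complement yields $u \in SBV^2(\hat Q_{h,\rho}(i);\R^3)$, and the conjugate $\tilde u := R_{j^*}^T(u - b_{j^*}) - {\rm id}$ satisfies $J_{\tilde u} \subset \partial E^*_h \cap \hat Q_{h,\rho}(i)$ with $\H^2(J_{\tilde u}) \le \alpha h^2/2$ --- small enough, by the calibration of $\alpha$ via $c_\mathrm{KP}$ in~\eqref{eq: T2}, to render Theorem~\ref{th: kornSBDsmall} effective (with dilation-invariant constant, cf.~Remark~\ref{eq:easy_rem}) --- while $\|{\rm sym}(\nabla \tilde u)\|^2_{L^2(\hat Q_{h,\rho}(i))} \le C\eps_{i,h}$ since $\nabla \tilde u \equiv 0$ outside $U_{j^*}$. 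The theorem provides an exceptional set $\omega$ with $\H^2(\partial^*\omega) \le c\H^2(J_{\tilde u})$ and $\L^3(\omega) \le c\H^2(J_{\tilde u})^{3/2} \le Ch\,\H^2(\partial E^*_h\cap\hat Q_{h,\rho}(i))$, an infinitesimal rigid motion $a(x) = Wx + c$ (with $W$ skew) satisfying $h^{-1}\|\tilde u - a\|_{L^2(\hat Q_{h,\rho}(i)\setminus\omega)} + \|\nabla \tilde u - W\|_{L^2(\hat Q_{h,\rho}(i)\setminus\omega)} \le C\eps_{i,h}^{1/2}$, and a Sobolev replacement $v \in W^{1,2}(\hat Q_{h,\rho}(i);\R^3)$ with $v \equiv \tilde u$ on $\hat Q_{h,\rho}(i)\setminus\omega$ and $\|v\|_{L^\infty} \le CM$.

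Setting $D_{i,h} := U_{j^*} \setminus \omega$, $R_{i,h} := R_{j^*}\exp(W) \in SO(3)$, $b_{i,h} := R_{j^*} c + b_{j^*}$, and $z_{i,h}(x) := R_{j^*}(v(x) + x) + b_{j^*}$ assembles the required objects. The bounds~\eqref{big_volume_of_dominant_set}--\eqref{big_surface_of_dominant_set} follow from the decomposition $\hat Q_{h,\rho}(i)\setminus D_{i,h} = (\hat Q_{h,\rho}(i)\setminus U_{j^*}) \cup \omega$ and the two previous paragraphs (the $\tfrac{1}{32}$-volume bound being again engineered by the choice of $\alpha$); the identity~\eqref{properties_of_Sobolev_replacement}$(\rm i)$ is immediate from $v \equiv \tilde u$ on $\hat Q_{h,\rho}(i)\setminus\omega \supset D_{i,h}$, and~\eqref{properties_of_Sobolev_replacement}$(\rm iii)$ together with $|b_{i,h}| \le CM$ follow from the $L^\infty$-preservation in Theorem~\ref{th: kornSBDsmall} and a further application of Lemma~\ref{lemma: rigid motions} to control $|c|$. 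The $L^2$-estimates~\eqref{L2_gradient_estimates} and~\eqref{properties_of_Sobolev_replacement}$(\rm ii)$ reduce, after undoing the $R_{j^*}^T$-conjugation, to the Korn--Poincar\'e bounds \emph{modulo} the quadratic error $|\exp(W) - I - W|^2 \le C|W|^4$ incurred by replacing the linearization $R_{j^*}(I + W)$ by the genuine rotation $R_{i,h}$.

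The main obstacle is precisely this quadratic absorption, since a quantitative smallness estimate on $|W|$ is not provided by the sharp part of Theorem~\ref{prop:rigidity} alone. The crux is that the \emph{non-sharp} estimate~\eqref{eq: main rigitity}$(\rm ii)$ can be applied \emph{qualitatively}: combining $\|\nabla \tilde u\|^2_{L^2(U_{j^*})} \le C_{\eta_h}(h^2/\kappa_h)^3\eps_{i,h}$ with the Korn--Poincar\'e triangle inequality and~\eqref{parameters_for_uniform_bounds} yields $|W|^2 \le Ch^{-17/5}\eps_{i,h}$, and then $\eps_{i,h} \le h^4$ forces $|W|^4 h^5 \le Ch^2\eps_{i,h}$ as well as $|W|^4 h^3 \le C\eps_{i,h}$, so that the quadratic correction is indeed absorbed into the target bounds. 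Thus the two halves of Theorem~\ref{prop:rigidity} play complementary roles: part~$(\rm i)$ delivers the sharp leading-order $\eps_{i,h}$-estimate, while part~$(\rm ii)$ only needs to tame the $SO(3)$-projection correction qualitatively.
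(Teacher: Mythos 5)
Your proposal follows essentially the same route as the paper's proof in Appendix~\ref{sec: aux estimates}: isoperimetric identification of the dominant component calibrated by the choice of $\alpha$, the sharp symmetrized-gradient estimate from Theorem~\ref{prop:rigidity}(i), an upgrade to full-gradient control via the $SBV^2$ Korn--Poincar\'e inequality of Theorem~\ref{th: kornSBDsmall}, and absorption of the quadratic error from projecting the linearized rotation back onto $SO(3)$ using the suboptimal estimate \eqref{eq: main rigitity}(ii) together with $\eps_{i,h}\le h^4$ --- and your exponent arithmetic matches the paper's. Two small points that the paper handles and you gloss over: the degenerate case $P^1_{i,h}\subset E^*_h$ (where Theorem~\ref{prop:rigidity} does not apply to the dominant component, but $v^*_h\equiv \mathrm{id}$ there so one simply takes $R_{i,h}=\mathrm{Id}$), and the fact that \eqref{properties_of_Sobolev_replacement}(ii) is an integral over all of $\hat Q_{h,\rho}(i)$, so the gradient bound for the infinitesimal rigid motion coming from Theorem~\ref{th: kornSBDsmall} (which holds only off $\omega$) must be supplemented by an application of the classical Korn inequality to the Sobolev replacement on the whole cuboid, which is exactly how the paper proceeds via \eqref{Korn_Poincare_inequality_first}.
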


Since $\L^3(\hat Q_{h}(i)\setminus D_{i,h})$ is small, we will be hereafter referring to $D_{i,h}$ as the \emph{dominant component}, and in a similar fashion $r_{i,h}$ will be referred to as the \emph{dominant rigid motion} which approximates $v_h^*$ in $\hat Q_{h, \rho}(i)$. Of course, it is still possible that $D_{i,h} \subset E_h^*$, which should be interpreted as the void having large volume in $\hat Q_{h,\rho}(i)$.

The reader might have already noticed that the estimate  \eqref{L2_gradient_estimates} for the full gradient and also for the deformations $v_h^*$ comes with the optimal exponent in the local elastic energy \OOO $\eps_{i,h}$\EEE, which at first glimpse might be in contrast to \eqref{eq: main rigitity}, as $\gamma^{-1}= (\kappa_h/h^2)^{-1}$ is diverging with $h\to 0^{\OOO+\EEE}$, cf.~\eqref{rate_1_gamma_h}. As shown in the proof, such an improvement is possible by applying the Korn-Poincar\'e inequality of Theorem \ref{th: kornSBDsmall}, in the case of void sets with small surface area. 
 
The proof of Proposition \ref{summary_estimates_proposition} is basically a repetition of the analogous one in \cite[Proposition~3.5]{KFZ:2023}. For the sake of  completeness, we include it in  Appendix \ref{sec: aux estimates}. The main difference is the additional requirement $\eps_{i,h}\leq h^4$ in the definition of $I^h_{\OOO\rm{g}\EEE}$ in \eqref{good_cuboids}. We refer to  Remark \ref{rem:choice_h_4} for a comment in this direction. 

By a standard argument, which is again repeated in Appendix \ref{sec: aux estimates}, an immediate consequence of the previous proposition is an optimal estimate for the difference of two dominant rigid motions on neighboring good cubes.  

\begin{corollary}[Difference of rigid motions]\label{difference_rigid-motions}
Let $i,i'\in I^h_{\rm g}$ be such that $|i-i'|_\infty\leq 1$. The rigid motions $r_{i,h}$,  $r_{i',h}$ of  Proposition \ref{summary_estimates_proposition} satisfy 
\begin{equation}\label{general_differences_rotations2}
h^{-2}\|r_{i,h}-  r_{i',h}\|_{L^{\infty}(\hat Q_{h}(i)  \cup \hat Q_{h}(i') )   }^2+\big|R_{i,h}-R_{i',h}\big|^2\leq  Ch^{-3}(\eps_{i,h}+\eps_{i',h})\,.
\end{equation}
\end{corollary}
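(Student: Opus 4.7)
The plan is to reduce the claim to an application of Lemma \ref{lemma: rigid motions} applied to the affine difference
$$F(x) := r_{i,h}(x) - r_{i',h}(x) = (R_{i,h}-R_{i',h})x + (b_{i,h}-b_{i',h}),$$
on a ball $B_r(x_0)$ of radius $r \sim h$ containing $\hat Q_h(i) \cup \hat Q_h(i')$ (which exists since $|i-i'|_\infty \le 1$ and the cuboids have sidelengths at most $3h$). The whole proof then hinges on producing a set $D$ of $\mathcal{L}^3$-measure comparable to $h^3$ on which both rigid motions are close to the same object, namely $v_h^*$.

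For this common good set, I would take $D := D_{i,h} \cap D_{i',h}$. Both dominant sets are contained in $\hat Q_{h,\rho}(i)$, $\hat Q_{h,\rho}(i')$ respectively, which overlap in a region of volume at least a universal constant times $h^3$ because the choice $\rho \le \rho_0 = 1-(127/128)^{1/3}$ in \eqref{eq:choice_of_rho} keeps the shrunken cuboids in substantial intersection. Using $\hat Q_{h,\rho}(i) \subset \hat Q_h(i)$ together with the second bound in \eqref{big_volume_of_dominant_set}, which controls $\mathcal L^3(\hat Q_h(i)\setminus D_{i,h}) \le \tfrac{1}{32}\mathcal L^3(\hat Q_h(i))$, a direct calculation shows
$$\mathcal{L}^3(D) \ge \mathcal{L}^3(\hat Q_{h,\rho}(i)\cap\hat Q_{h,\rho}(i')) - 2\cdot\tfrac{9}{32}h^3 \ge \delta h^3$$
for some universal $\delta>0$. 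This is exactly the reason that the factor $(127/128)^{1/3}$ enters the definition of $\rho_0$.

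On $D$, both estimates in \eqref{L2_gradient_estimates} apply to $v_h^*$ since $D \subset D_{i,h}\cap D_{i',h}$. The triangle inequality then yields
$$\int_{D}|F(x)|^2\,\mathrm{d}x \le 2\int_{D}|v_h^*-r_{i,h}|^2\,\mathrm{d}x + 2\int_{D}|v_h^*-r_{i',h}|^2\,\mathrm{d}x \le Ch^2(\varepsilon_{i,h}+\varepsilon_{i',h}).$$
Now Lemma \ref{lemma: rigid motions} (applied with $G=R_{i,h}-R_{i',h}$, $b=b_{i,h}-b_{i',h}$, $r\sim h$, and $E=D$, since $\mathcal L^3(D)\ge \delta r^3$ for a universal $\delta$) gives
$$\|F\|_{L^\infty(\hat Q_h(i)\cup\hat Q_h(i'))}^2 \le Ch^{-3}\|F\|_{L^2(D)}^2 \le Ch^{-1}(\varepsilon_{i,h}+\varepsilon_{i',h}),$$
$$|R_{i,h}-R_{i',h}|^2 \le Ch^{-5}\|F\|_{L^2(D)}^2 \le Ch^{-3}(\varepsilon_{i,h}+\varepsilon_{i',h}).$$
Multiplying the first inequality by $h^{-2}$ and adding the two yields \eqref{general_differences_rotations2}.

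There is no genuine obstacle here; the only point requiring care is the geometric bookkeeping in producing $D$ with $\mathcal L^3(D)\gtrsim h^3$ despite the dominant sets living in the slightly shrunken cuboids $\hat Q_{h,\rho}$. This is precisely what the quantitative choice of $\rho_0$ in \eqref{eq:choice_of_rho} and the factor $\tfrac{1}{32}$ in \eqref{big_volume_of_dominant_set} are calibrated to guarantee, so the volume budget closes with room to spare. Once $\mathcal L^3(D)\gtrsim h^3$ is secured, Lemma \ref{lemma: rigid motions} does all the work of converting the optimal $L^2$-bound on the affine map into the corresponding optimal $L^\infty$-bound on the cuboids and matrix-norm bound on its linear part.
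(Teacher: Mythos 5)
Your proposal is correct and follows essentially the same route as the paper: triangle inequality through $v_h^*$ on $D_{i,h}\cap D_{i',h}$ using \eqref{L2_gradient_estimates}, a volume lower bound $\mathcal{L}^3(D_{i,h}\cap D_{i',h})\gtrsim h^3$ from \eqref{big_volume_of_dominant_set}, and then Lemma \ref{lemma: rigid motions} on a ball of radius $\sim h$ containing $\hat Q_h(i)\cup\hat Q_h(i')$. The only cosmetic difference is that the paper subtracts the exceptional volumes from $\mathcal{L}^3(\hat Q_h(i)\cap\hat Q_h(i'))>\tfrac{9}{8}h^3$ rather than from the intersection of the shrunken cuboids, but both bookkeepings close.
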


\subsection{Construction of almost Sobolev replacements and proof of Proposition~\ref{Lipschitz_replacement}}\label{sec: global_constructions_proofs}

In this subsection we construct the fields $(r_h)_{h>0}$, $(R_h)_{h>0}$, and $(w_h)_{h>0}$ of Proposition~\ref{Lipschitz_replacement}, and afterwards give the proof of the proposition.

First of all, recalling \eqref{Omega_h_local} and   \eqref{good_cuboids},  \MMM  we introduce index sets related to \textit{interior good cubes} by \EEE
\begin{equation}\label{eq:interior_indices} 
I^h_{\rm{int}}:=\big\{i\in I^h_{\rm{g}}\colon i'\in I^h_{\rm{g}} \ \text{ for every }   i' \in \Z^2   \text{ with }  |i'-i|_{\infty}\leq 1\big\}\,,
\end{equation} 
 and \MMM {\it exterior good cubes} by \EEE
\begin{equation*} 
I^{h,\rho}_{\rm{ext}}:=\big\{i\in I^h_{\rm{g}}\setminus I^h_{\rm{int}}\colon Q_h(i)\cap \Omega_{h,\rho}\neq \emptyset\big\}\,.
\end{equation*} 
\EEE
Note that, \OOO for $h>0$ small enough, \EEE if $i\in \OOO I^{h,\rho}_{\rm{ext}}\EEE$, then there exists at least one $i'\in \Z^2$ with $|i'-i|_\infty\leq 1$ such that $i'\in I^h_{\rm{b}}$. \MMM By \EEE    \eqref{cardinality_of_bad_cuboids}  this  yields 
\begin{equation}\label{eq:card_of_non_inter_indices}
\#\OOO I^{h,\rho}_{\rm{ext}}\EEE\leq Ch^{-1}\,,	
\end{equation}
for a universal constant $C>0$. 

For the construction of the sequences $(r_h)_{h>0}$, $(R_h)_{h>0}$, and $(w_h)_{h>0}$, we consider a partition of unity subordinate \MMM to \EEE $I^h_{\rm{int}}$, cf.~\eqref{eq:interior_indices}. For every $h>0$, we introduce $(\psi^i_h)_{i\in I^h_{\rm{int}}}\subset C^\infty_c(\R^3)$ such that
\begin{equation}\label{eq:partition_of_unity_2}
\sum_{i\in I^h_{\rm{int}}} \psi^i_h=1\,,
\end{equation}
and, for every $i\in I^h_{\rm{int}}$\,,
\begin{equation}\label{eq:partition_of_unity_1}
0\leq \psi^i_h\leq 1\,, \quad \psi^i_h\equiv 1 \ \text{on } \tilde Q_{\OOO 9h/10\EEE }(i)\,, \quad \psi^i_h\equiv 0 \ \text{on } \R^2\setminus \tilde Q_{\OOO 11h/10\EEE }(i)\,, \quad \|\nabla\psi^i_h\|_{L^\infty(\R^2)}\leq \frac{C}{\OOO h\EEE}\,, 
\end{equation}
where for \MMM $s>0$ \EEE we have used the notation 
\[\tilde Q_{sh}(i):=(hi,0)+\left(\EEE-\frac{sh}{2},\frac{sh}{2}\right)^2\times \left(-\frac{h}{2},\frac{h}{2}\right)\,.\]
Then, we define $r_h \in  SBV^2(\Omega_{h,\rho} ;\R^3)$, $R_h\in SBV^2(\Omega_{h,\rho};\R^{3\times 3})$ and $w_{h}\in  SBV^2(\Omega_{h,\rho} ;\R^3)$ as
\begin{align}\label{eq:r_h_construcc}
\begin{split}
\ \ \ r_h(x) := \begin{cases}  x& \text{ if } x\in Q_{h}(i)\cap\Omega_{h,\rho} \text{ for some } i\notin I^h_{\rm{int}}\,,\\
\sum_{i\in I^h_{\rm{int}}}\psi_h^i(x)r_{i,h}(x) & \text{ otherwise}\,,
\end{cases}
\end{split}
\end{align}
\begin{align}\label{eq:R_h_construcc}
\begin{split}
\  R_h(x) := \begin{cases}  \mathrm{Id}& \ \text{if } x\in Q_{h}(i)\cap\Omega_{h,\rho}\ \text{ for some } i\notin I^h_{\rm{int}}\,,\\
\sum_{i\in I^h_{\rm{int}}}\psi_h^i(x)R_{i,h} & \ \text{otherwise}\,,
\end{cases}
\end{split}
\end{align}
\begin{align}\label{eq:w_h_construcc}
\begin{split}
w_h(x) := \begin{cases}  x& \text{ if } x\in Q_{h}(i)\cap\Omega_{h,\rho} \text{ for some } i\notin I^h_{\rm{int}}\,,\\
\sum_{i\in I^h_{\rm{int}}}\psi_h^i(x)z_{i,h}(x) & \text{ otherwise}\,,
\end{cases}
\end{split}
\end{align}
where the fields $r_{i,h}$, $R_{i,h}$, and $z_{i,h}$ are given in Proposition \ref{summary_estimates_proposition}. The construction and the definition of $\Omega_{h,\rho}$ in \eqref{Omega_h_local} implies \OOO that \EEE $r_h \in  SBV^2(\Omega_{h,\rho};\R^3)$,  $R_h \in  SBV^2(\Omega_{h,\rho};\R^{3\times 3})$, $w_h \in~SBV^2(\Omega_{h,\rho};\R^{3})$, and the jump sets satisfy, for $\OOO h>0\EEE$ sufficiently small, 
\begin{align}\label{eq: whjum}
J_{r_h} \cup  J_{R_h} \cup J_{w_h} \subset \Omega_{h,\rho}\cap \bigcup_{i \in I^h_{\rm{b}}\cup\OOO I^{h,\rho}_{\rm{ext}}\EEE} \partial Q_{h}(i)\,. 
\end{align}
We are now ready to give the proof of  Proposition \ref{Lipschitz_replacement}, following the strategy of the proof of the corresponding result in the setting of rods, namely \cite[Proposition 3.1]{KFZ:2023}.

\begin{proof}[Proof of Proposition \ref{Lipschitz_replacement}]
First of all, the bounds in \eqref{almost_the same_volume_and_control_of_energy}(i) follow from the definitions \eqref{eq:r_h_construcc}--\eqref{eq:R_h_construcc}, the bound $|b_{i,h}|\le CM$ for $i \in I^h_{\rm g}$, and the fact that $SO(3) \subset \R^{3 \times 3}$ is compact. Next, \eqref{almost_the same_volume_and_control_of_energy}(ii) follows directly from \eqref{eq: whjum}, \eqref{cardinality_of_bad_cuboids}, and \eqref{eq:card_of_non_inter_indices}.

We proceed to show \eqref{almost_the same_volume_and_control_of_energy}(iii), which we verify first for the fields $(R_h)_{h>0}$. For this, we first obtain  a control on $\|\nabla R_h\|_{L^\infty(Q_{h}(i)\cap \Omega_{h,\rho})}$ for every $i\in I_{\rm{int}}^{h}$. Indeed, in view of \eqref{eq:R_h_construcc}, \eqref{eq:partition_of_unity_2}--\eqref{eq:partition_of_unity_1}, and \eqref{general_differences_rotations2}, for $i\in I_{\rm{int}}^{h}$ fixed, $x\in Q_{h}(i)\cap \Omega_{h,\rho}$, and $k\in \{1,2,3\}$, we can estimate 
\begin{align*}
|\partial_kR_h(x)|&=\Big|\sum_{j\in I^h_{\rm{int}}}\partial_k\psi_h^{j}(x)R_{j,h}\Big|=\Big|\sum_{j\in I^h_{\rm{int}}}\partial_k\psi_h^{j}(x)(R_{j,h}-R_{i,h})\Big|\\
&\leq\sum_{j\in \mathcal{N}(i)}\|\partial_k\psi_h^j\|_{L^\infty}|R_{j,h}-R_{i,h}|\leq \frac{C}{h}h^{-3/2}\sum_{j\in \mathcal{N}(i)}\eps^{1/2}_{j,h} \,,
\end{align*}
where  
\begin{equation}\label{eq:neighbouring}
\mathcal{N}(i):=\{j\in \Z^2\colon |j-i|_{\infty}\leq 1\}\,.
\end{equation} 
Therefore,
\begin{align}\label{eq:nabla_R_h_L_infty}
\begin{split}
\|\nabla R_h\|^2_{L^\infty(Q_h(i)\cap \Omega_{h,\rho})}\leq Ch^{-5}\sum_{j\in \mathcal{N}(i)}\eps_{j,h} \,.
\end{split}
\end{align}
Using \eqref{eq:R_h_construcc}, \eqref{eq:nabla_R_h_L_infty}, and \eqref{localized_elastic_energy_on_good_cuboids}, we can thus estimate
\begin{equation}\label{eq:nabla_R_h_L_2_grad}
\int_{\Omega_{h,\rho}}|\nabla R_h|^2\,\mathrm{d}x\leq \sum_{i\in I^h_{\rm{int}}}\int_{Q_h(i)\cap \Omega_{h,\rho}}|\nabla R_h|^2\,\mathrm{d}x\leq Ch^{-5}h^3\sum_{i\in I^h_{\rm{g}}}\eps_{i,h}\leq Ch\,.
\end{equation}
Analogously, for $i\in I_{\rm{int}}^{h}$, $x\in Q_{h}(i)\cap \Omega_{h,\rho}$, and $k\in \{1,2,3\}$, using \eqref{eq:r_h_construcc}, \eqref{eq:partition_of_unity_2}--\eqref{eq:partition_of_unity_1}, and \eqref{general_differences_rotations2}, we estimate 
\begin{align*}
|\partial_kr_h(x)|&=\Big|\sum_{j\in I^h_{\rm{int}}}(\partial_k\psi_h^{j}(x)r_{j,h}(x)+\psi_h^{j}(x)R_{j,h}e_k)\Big| \le \Big|\sum_{j\in I^h_{\rm{int}}}\partial_k\psi_h^{j}(x)(r_{j,h}(x)-r_{i,h}(x))\Big|+ 1 \\
&\le \sum_{j\in \mathcal{N}(i)}\|\partial_k\psi_h^j\|_{L^\infty}\|r_{j,h}-r_{i,h}\|_{L^\infty(Q_{h}(i))} +1\leq Ch^{-3/2}\sum_{j\in \mathcal{N}(i)}\eps^{1/2}_{j,h} +1\,.
\end{align*}
Thus,
\begin{equation}\label{eq:nabla_r_h_L_infty}
\|\nabla r_h\|^2_{L^\infty(Q_h(i)\cap \Omega_{h,\rho})}\leq Ch^{-3}\sum_{j\in \mathcal{N}(i)}\eps_{j,h} +1\,.
\end{equation}
In a similar fashion as before, using  \eqref{eq:r_h_construcc}, \eqref{cardinality_of_bad_cuboids}, \eqref{eq:card_of_non_inter_indices}, \EEE \eqref{eq:nabla_r_h_L_infty}, and  \eqref{localized_elastic_energy_on_good_cuboids}, we can estimate
\begin{align}\label{eq:nabla_r_h_L_2_grad}
\begin{split}
\int_{\Omega_{h,\rho}}|\nabla r_h|^2\,\mathrm{d}x&  = 3\sum_{i\in I^h_{\rm{b}}\cup  I^{h,\rho}_{\rm{ext}}}\L^3(Q_{h}(i)\cap \Omega_{h,\rho})+ \EEE \sum_{i\in I^h_{\rm{int}}}\int_{Q_h(i)\cap \Omega_{h,\rho}}|\nabla r_h|^2\,\mathrm{d}x \\
&\leq   C\Big[\big(\#I^h_{\rm{b}}+  \#I^{h,\rho}_{\rm{ext}}\big) h^{3}+\sum_{i\in I^h_{\rm{g}}}\eps_{i,h}+h\Big]\leq C(h^2+h^3+h)\leq Ch\,. \EEE
\end{split}
\end{align}
Collecting \eqref{eq:nabla_R_h_L_2_grad} and \eqref{eq:nabla_r_h_L_2_grad}, we conclude the proof of \eqref{almost_the same_volume_and_control_of_energy}(iii).

We now proceed to verify \eqref{almost_the same_volume_and_control_of_energy}(iv). Using \OOO
\eqref{cardinality_of_bad_cuboids},
\eqref{eq:card_of_non_inter_indices}, \EEE
\eqref{big_volume_of_dominant_set}, \eqref{ 5number}, \eqref{uniform_energy_bound}, \eqref{eq: void-new}, \eqref{F_surf_energy}, and \eqref{eq: G energy} \EEE we first estimate for $h>0$ sufficiently small, 
\begin{align}\label{eq:measure_estimate_v_h_1}
\begin{split}	
\L^3\Big(\Omega_{h,\rho}\setminus\bigcup_{i\in I^h_{\rm{int}}}D_{i,h}\big)&\leq \sum_{\OOO i\in I^h_{\rm{b}}\cup I^{h,\rho}_{\rm{ext}}}\EEE\L^3(\hat Q_{h,\rho}(i))+
\sum_{i\in I^h_{\rm{int}}}\L^3(\hat Q_{h,\rho}(i)\setminus D_{i,h})\\
&\leq C\big(\#I^h_{\rm{b}}+\#\OOO I^{h,\rho}_{\rm{ext}}\EEE\big)h^{3}
+Ch\sum_{i\in I^h_{\rm{int}}}\H^2(\partial E_h^*\cap \hat Q_{h}(i))\\
&\leq Ch^2+ Ch\H^2(\partial E_h^*\cap \Omega_h)\leq Ch^2\,.\EEE
\end{split}	
\end{align}
Choose a sequence  $(\theta_h)_{h>0}\subset(0,+\infty)$ as in \eqref{eq:choice_theta_h}. Then, using \OOO Chebyshev's inequality, \EEE \eqref{eq:r_h_construcc}, \eqref{eq:partition_of_unity_2}--\eqref{eq:partition_of_unity_1}, \eqref{L2_gradient_estimates}, \eqref{general_differences_rotations2}, and  \eqref{localized_elastic_energy_on_good_cuboids}, we find
\begin{align}\label{eq:measure_estimate_v_h_2}
\L^3\OOO\Big(\EEE\{|v^*_h(x)-r_h|>\theta_h\}\cap\bigcup_{i\in I^h_{\rm{int}}}D_{i,h}\OOO\Big)\EEE&\leq \theta_h^{-2}\sum_{i\in I^h_{\rm{int}}}\int_{D_{i,h}}\Big|\sum_{j\in \mathcal{N}(i)}\psi_h^j(v_h^*(x)-r_{j,h})\Big|^2\,\mathrm{d}x\nonumber\\
&\leq C\theta_h^{-2}\sum_{i\in I^h_{\rm{int}}}\sum_{j\in \mathcal{N}(i)}\int_{D_{i,h}}\big|v_h^*(x)-r_{i,h}\big|^2\,\mathrm{d}x\nonumber\\
& \ \ \ +C\theta_h^{-2}\sum_{i\in I^h_{\rm{int}}}\sum_{j\in \mathcal{N}(i)}\int_{D_{i,h}}\big|r_{i,h}-r_{j,h}\big|^2\,\mathrm{d}x\nonumber\\
&\leq C\theta_h^{-2}h^{2}\sum_{i\in I^h_{\rm{g}}}\eps_{i,h}\leq  Ch^{5}\theta_h^{-2}\,. 
\end{align}
Combining now \eqref{eq: *mod2}, \eqref{eq:measure_estimate_v_h_1}, and  \eqref{eq:measure_estimate_v_h_2}, we obtain
\begin{align}\label{eq:measure_estimate_v_h_3}
\L^3\big(\Omega_{h,\rho}\cap \{|v_h(x)-r_h|>\theta_h\}\big)&\leq \L^3\big(\{v_h^*\neq v_h\}\big)+\L^3\big(\Omega_{h,\rho}\setminus\bigcup_{i\in I^h_{\rm{int}}}D_{i,h}\big)\nonumber\\
&\quad +\L^3\big(\{|v^*_h(x)-r_h|>\theta_h\}\cap\bigcup_{i\in I^h_{\rm{int}}}D_{i,h}\big)\nonumber\\
&\leq  Ch^2+Ch^5\theta_h^{-2}\,. 
\end{align}
Using the same estimates as for \eqref{eq:measure_estimate_v_h_2}, we also get 
\begin{equation*}
\L^3\OOO\Big(\EEE\{|\nabla v^*_h(x)-R_h|>\theta_h\}\cap\bigcup_{i\in I^h_{\rm{int}}}D_{i,h}\OOO\Big)\EEE\leq Ch^{3}\theta_h^{-2}\,, 
\end{equation*}
so that, in the same way as for \eqref{eq:measure_estimate_v_h_3}, we   obtain
\begin{equation}\label{eq:measure_estimate_v_h_5}
\L^3\big(\Omega_{h,\rho}\cap \{|\nabla v_h(x)-R_h|>\theta_h\}\big)\leq Ch^2+Ch^3\theta_h^{-2}\,. 
\end{equation}
Now, \eqref{almost_the same_volume_and_control_of_energy}(iv) follows from \eqref{eq:measure_estimate_v_h_3}--\eqref{eq:measure_estimate_v_h_5} and \eqref{eq:choice_theta_h}. 
Similarly, we observe that 
\begin{equation*}
\OOO\{w_h\neq v_h\}\cap \Omega_{h,\rho}\EEE\subset \{v_h\neq v_h^*\}\cup \bigcup_{i\in I^h_{\rm{b}}\cup \MMM I^{h,\rho}_{\rm{ext}} \EEE  }\hat Q_{h,\rho}(i)\cup \bigcup_{i\in I^h_{\rm{int}}}\big(\EEE\hat Q_{h,\rho}(i)\setminus D_{i,h}\big)\,,
\end{equation*}
so that \eqref{w_h_almost_the same}(i) follows from \eqref{eq: *mod2} and  \eqref{eq:measure_estimate_v_h_1}. 
We proceed with \eqref{w_h_almost_the same}(ii). By \OOO \eqref{eq:R_h_construcc}, \EEE \eqref{eq:w_h_construcc}, \OOO \eqref{eq:partition_of_unity_2}--\eqref{eq:partition_of_unity_1}, and \EEE \eqref{eq:neighbouring}, we can   estimate  
\begin{align}\label{eq:gradient_bound_z_h_1}	  
\int_{\Omega_{h,\rho}}|\nabla w_h-R_h|^2\,\mathrm{d}x&\le \sum_{i\in I^h_{\rm{int}}}\int_{\hat Q_{h,\rho}(i)}\Big|\sum_{j\in \mathcal{N}(i)}z_{j,h}  \otimes   \nabla \psi_h^j +\sum_{ \MMM j\in \mathcal{N}(i)}\psi_h^j(\nabla z_{j,h}-R_{j,h})\Big|^2\nonumber\\
&= \sum_{i\in I^h_{\rm{int}}}\int_{\hat Q_{h,\rho}(i)}\Big|\sum_{j\in \mathcal{N}(i)} (z_{j,h}-r_{i,h})\otimes \nabla \psi_h^j +\sum_{j\in \mathcal{N}(i)}\psi_h^j(\nabla z_{j,h}-R_{j,h})\Big|^2\nonumber\\
&\leq Ch^{-2}\sum_{i\in I^h_{\rm{int}}}\sum_{j\in \mathcal{N}(i)}\int_{\hat Q_{h,\rho}(i)}|z_{j,h}-r_{i,h}|^2\,\mathrm{d}x+C\sum_{i\in I^h_{\rm{g}}}\int_{\hat Q_{h,\rho}(i)}|\nabla z_{i,h}-R_{i,h}|^2\,\mathrm{d}x\nonumber\\
&\leq Ch^{-2}\sum_{i\in I^h_{\rm{int}}}\sum_{j\in \mathcal{N}(i)}h^3\|r_{i,h}-r_{j,h}\|^2_{L^\infty(\hat Q_h(i)\cup\hat Q_h(j))}\nonumber \\
&\qquad +C\sum_{i\in I^h_{\rm{g}}}\int_{\hat Q_{h,\rho}(i)}\left(h^{-2}|z_{i,h}-r_{i,h}|^2+|\nabla z_{i,h}-R_{i,h}|^2\right)\,\mathrm{d}x\,.
\end{align}
Using \eqref{properties_of_Sobolev_replacement}(ii), \eqref{ 5number}, \eqref{general_differences_rotations2}, and \eqref{localized_elastic_energy_on_good_cuboids}, we therefore estimate
\begin{equation*}
\int_{\Omega_{h,\rho}}|\nabla w_h-R_h|^2\,\mathrm{d}x\leq  \OOO C\EEE\sum_{i\in I^h_{\rm{g}}}\eps_{i,h}\leq Ch^3\,,
\end{equation*}
which proves \eqref{w_h_almost_the same}(ii). Then,   the gradient bound in \eqref{w_h_almost_the same}(iii) follows from \eqref{w_h_almost_the same}(ii) and \eqref{almost_the same_volume_and_control_of_energy}(i). 
The $L^\infty$-bound in \eqref{w_h_almost_the same}(iii) follows from the definition \eqref{eq:w_h_construcc} and the uniform control in  \eqref{properties_of_Sobolev_replacement}(iii).The surface area bound therein is a consequence of \eqref{eq: whjum}, \eqref{cardinality_of_bad_cuboids}, and \eqref{eq:card_of_non_inter_indices}. Finally, \eqref{w_h_almost_the same}(iv) follows directly from \eqref{eq: whjum} for $\Q_{v_h}:=\{Q_h(i)\in \Q_h\colon i\in I^h_{\rm{b}} \cup \OOO \OOO I^{h,\rho}_{\rm{ext}}\EEE\}$, where we recall once again \eqref{cardinality_of_bad_cuboids} and \eqref{eq:card_of_non_inter_indices}.
The proof is now complete.
\end{proof}

\section{Proof of Theorem \ref{thm:main_kirchhoff_cptness}}\label{compactness}

We again use  the continuum  subscript $h>0$ instead of the sequential subscript notation $(h_j)_{j\in \N}$ for convenience.

\begin{proof}[Proof of Theorem \ref{thm:main_kirchhoff_cptness}] We split the proof into two steps.

\begin{step}{(1): Compactness for the voids}
By the energy bound \eqref{uniform_rescaled_energy_bound} and  \eqref{eq: newenergy} we have that
\begin{equation}\label{energy_bound_in_the_order_1_domain}
h^{-2}\int_{\Omega\setminus \overline{V_h}} W(\nabla_hy_h(x))\,\mathrm{d}x+\int_{\partial V_h\cap \Omega}\big|\big(\nu^1_{V_h}(z), \nu^2_{V_h}(z), h^{-1}\nu^3_{V_h}(z)\big)\big|\, \mathrm{d}\H^2(z)\leq C\,,
\end{equation}
where $\nu_{V_h}(z):=(\nu^1_{V_h}(z),\nu^2_{V_h}(z), \nu^3_{V_h}(z))$ denotes the outward pointing unit normal to $\partial V_h\cap \Omega$ at the point $z$. Note that \eqref{energy_bound_in_the_order_1_domain} implies  
$$\sup_{h>0}\big(\L^3(V_h)+\H^2(\partial V_h\cap \Omega)\big)\leq C\,.$$
Hence, by the standard compactness result for sets of finite perimeter, cf.~\cite[Theorem 3.39]{Ambrosio-Fusco-Pallara:2000}, there exists $\tilde V\in \mathcal{P}(\Omega)$ such that,  up to a non-relabeled subsequence, we have
 \begin{equation}\label{V_j_converge_to_V}
\chi_{V_h}\to \chi_{\tilde V}\ \ \text{in\ } L^1(\Omega)\,.
\end{equation}
Invoking also \textit{Reshetnyak's lower semicontinuity theorem}, cf.~\cite[Theorem 2.38]{Ambrosio-Fusco-Pallara:2000}, applied to the lower semicontinuous, positively 1-homogeneous, convex function $\phi:\mathbb{R}^2\to [0,+\infty)$ with $\phi(\nu):=|\nu^3|$, and using again  \eqref{energy_bound_in_the_order_1_domain}, we obtain
\begin{equation*}
\int_{\partial^* \tilde  V\cap \Omega} |\nu_{\tilde V}^3| \,\mathrm{d}\H^2\leq \liminf_{h\to 0}\int_{\partial V_h\cap \Omega}|\nu_{V_h}^3|\,\mathrm{d}\H^2\leq C\liminf_{h\to 0}h=0\,, 
\end{equation*}
where $\nu_{\tilde V}$ denotes the  measure-theoretic outer unit normal to $\partial^* \tilde V$. This implies   that 

\begin{equation}\label{eq:planarity_of_tilde_V}
\nu^3_{\tilde V}(x)=0 \ \text{ for }\H^2\text{-a.e. } x\in \partial^*\tilde V\cap\Omega\,.
\end{equation}
In order to prove that the set $\tilde V$ is \OOO cylindrical over a  two-dimensional set\EEE, we proceed as follows. Let
\begin{equation}\label{eq:proj_V}
V:=\big\{(x_1,x_2)\in S\colon \mathcal{H}^1\big(((x_1,x_2)\times\R)\cap \tilde V\big)>0\big\}\,.	
\end{equation} 
Our aim is to prove that 
\begin{equation}\label{eq:tilde_V_eq_proj_V}
\mathcal{L}^3\Big(\tilde V\triangle \big(V\times(-\tfrac{1}{2}, \tfrac{1}{2})\big)\Big)=0\,.
\end{equation}
In view of Fubini's theorem,  for the verification of \eqref{eq:tilde_V_eq_proj_V}  it is enough to show that 
\begin{equation}\label{eq:slices_are_0}
\mathcal{H}^1\Big(\big[\tilde V\triangle \big(V\times(-\tfrac{1}{2}, \tfrac{1}{2})\big)\big]\cap\big((x_1,x_2)\times \R\big)\Big)=0 \ \text{for } \mathcal{L}^2\text{-a.e. } (x_1,x_2)\in S\,.
\end{equation}
Trivially, for every $(x_1,x_2)\notin V$ we have by \eqref{eq:proj_V}
\begin{equation}\label{eq:zero_proj_case_1}
\mathcal{H}^1\big(\big[\tilde V\triangle \big(V\times(-\tfrac{1}{2}, \tfrac{1}{2})\big)\big]\cap\big((x_1,x_2)\times \R\big)\big)\leq \mathcal{H}^1\big(\tilde V\cap\big((x_1,x_2)\times \R\big)\big)=0\,.
\end{equation} 
On the other hand, if $(x_1,x_2)\in V$, we can use \eqref{eq:planarity_of_tilde_V} and the coarea formula, cf.~\cite[Formula 4.36]{maggi2012sets}, to obtain
\begin{equation*}
0=\int_{\partial^*\tilde V\cap \Omega}|\nu^3_{\tilde V}|\,\mathrm{d}\mathcal{H}^2=\int_S\mathcal{H}^0\big((\partial^*\tilde V\cap\Omega) \cap \big((x_1,x_2)\times \R\big)\big)\,\mathrm{d}x_1\mathrm{d}x_2\,.
\end{equation*}
In particular, for $\mathcal{L}^2$-a.e.\ $(x_1,x_2)\in V$ we find that $\mathcal{H}^0((\partial^*\tilde V  \cap\Omega) \cap ((x_1,x_2)\times \R))=0$, which further  implies   that 
\begin{equation}\label{eq:zero_proj_case_2}
\mathcal{H}^1\big(\big[\tilde V\triangle \big(V\times(-\tfrac{1}{2}, \tfrac{1}{2})\big)\big]\cap\big((x_1,x_2)\times \R\big)\big) =0. 
\end{equation}
Now, \eqref{eq:zero_proj_case_1}--\eqref{eq:zero_proj_case_2} imply \eqref{eq:slices_are_0}. As discussed above, this in turn gives \OOO\eqref{eq:tilde_V_eq_proj_V}, which in particular implies that $V\in \mathcal{P}(S)$. \EEE Then, \eqref{V_j_converge_to_V} yields \eqref{compactness_properties}(i). 
\end{step}

\medskip
\begin{step}{(2): Compactness for the deformations}
Let $(v_h,E_h)_{h>0}$ be the sequence related to the sequence $(y_h,V_h)_{h>0}$ via \eqref{order_1_domain}\OOO-\EEE\eqref{from_v_to_y}, and let us fix $\rho >0$ sufficiently small. In order to show \eqref{compactness_properties}(ii),(iii) for the sequence $(y_h)_{h>0}$, we first consider the fields $(\tilde r_h)_{h>0}\subset SBV^2(\Omega_{1,\rho};\R^3)$ and $(\tilde R_h)_{h>0}\subset SBV^2(\Omega_{1,\rho};\R^{3\times 3})$, defined via 
\begin{equation}\label{eq:r_h_R_h_order_1}
\tilde r_h(x):= r_h(T_h x) \ \OOO \text{ and } \ \EEE
\tilde R_h(x):= R_h(T_h x)\,,
\end{equation}
where we recall \eqref{eq:r_h_construcc}, \eqref{eq:R_h_construcc}, and \eqref{anisotropic_dilation}.  Let us mention once again   that   $\tilde r_h$, $\tilde R_h$ \MMM may \EEE depend also on $\rho$, which we do not include in the notation for simplicity. \EEE By \eqref{almost_the same_volume_and_control_of_energy}(i),(iii) we can estimate
\begin{align}\label{eq:r_h_SBV_bound1.0}
\|\tilde r_h\|_{L^\infty(\Omega_{1,\rho})}+\|\nabla \tilde r_h\|_{L^2(\Omega_{1,\rho})} & \leq \|\tilde r_h\|_{L^\infty(\Omega_{1,\rho})}+\|\nabla_h \tilde r_h\|_{L^2(\Omega_{1,\rho})} \notag \\ & \le \|r_h\|_{L^\infty(\Omega_{h,\rho})}+h^{-\frac{1}{2}}\|\nabla r_h\|_{L^2(\Omega_{h,\rho})}  \leq C\,.
\end{align}
 By \eqref{almost_the same_volume_and_control_of_energy}(ii) and a simple change of variables (analogously to \eqref{energy_bound_in_the_order_1_domain}), we also get
\begin{align}\label{eq:r_h_SBV_bound2.0}
\H^2(J_{\tilde r_h})&\leq  \int_{J_{\tilde r_h}}|(\nu^1_{J_{\tilde r_h}}, \nu^2_{J_{\tilde r_h}}, h^{-1}\nu^3_{J_{\tilde r_h}})| \, \mathrm{d}\H^2 \OOO=\EEE h^{-1}\H^2(J_{r_h}) \leq C\,.
\end{align} 
\EEE Exactly in the same fashion, we also have 
\begin{equation}\label{eq:r_h_SBV_bound3.0}
\|\tilde R_h\|_{L^\infty(\Omega_{1,\rho})}+\| \MMM \nabla_h \EEE \tilde R_h\|_{L^2(\Omega_{1,\rho})}+\H^2(J_{\tilde R_h})\leq C\,.
\end{equation}
Thus, we can apply \textit{Ambrosio's SBV compactness theorem}, cf.~\cite[Theorems 4.8]{Ambrosio-Fusco-Pallara:2000}, to obtain fields $r_\rho\in SBV^2(\Omega_{1,\rho};\R^3)$ and $R_\rho\in SBV^2(\Omega_{1,\rho};\R^{3\times 3})$ such that, up to  subsequences (not  relabeled), 
\begin{align}\label{eq:convergence_of_r_h_R_h}
\begin{split}	
\rm{(i)}&\quad\tilde r_h\to r_\rho\ \mathrm{strongly\ in\ } L^2(\Omega_{1,\rho};\R^3)\,,\quad\qquad  \nabla\tilde r_h\rightharpoonup \nabla r_\rho\ \ \mathrm{weakly\ in\ } L^2(\Omega_{1,\rho};\R^{3\times 3})	\\
\rm{(ii)}&\quad\tilde R_h\to R_\rho\ \mathrm{strongly\ in\ } L^2(\Omega_{1,\rho};\R^{3\times 3})	\,,\quad \ \nabla \tilde R_h\rightharpoonup \nabla R_\rho\ \mathrm{weakly\ in\ } L^2(\Omega_{1,\rho};\R^{3\times3\times 3})\,.
\end{split}
\end{align}	
We now verify that 
\begin{align}\label{eq:structure_of_limiting_fields}
\begin{split}
\rm{(i)}&\quad\partial_3 r_\rho= 0\ \ \OOO \L^3\text{-a.e. in }\EEE \Omega_{1,\rho}\,, \ \ R_\rho\in SO(3) \ \text{with } \partial_3 R_\rho= 0\ \ \OOO \L^3\text{-a.e. in }\EEE \Omega_{1,\rho}\,.\\
\rm{(ii)}&\quad \nabla'r_\rho=R'_\rho\,,\ \text{where } R'_\rho:=\big(\OOO R_\rho e_1|R_\rho e_2\EEE\big)\in SBV^2(\Omega_{1,\rho};\R^{3\times2})\,
\end{split}
\end{align}
\MMM where we recall the notation $\nabla' :=( \partial_1 , \partial_2)$. \EEE
Indeed, by the lower semicontinuity of the $L^2$-norm under weak convergence,  \eqref{eq:r_h_SBV_bound1.0}, and \eqref{eq:r_h_SBV_bound3.0} we obtain  
\begin{align*}
\|\partial_3 r_\rho\|_{L^2(\Omega_{1,\rho})}+\|\partial_3 R_\rho\|_{L^2(\Omega_{1,\rho})}&\leq \liminf_{h\to 0}\|\partial_3 \tilde r_h\|_{L^2(\Omega_{1,\rho})}+\liminf_{h\to 0}\|\partial_3 \tilde R_h\|_{L^2(\Omega_{1,\rho})}\nonumber \\
&\leq \liminf_{h\to 0} \big( h\|\nabla_h\tilde r_h\|_{L^2(\Omega_{1,\rho})}\big)+ \liminf_{h\to 0} \big(h\|\nabla_h\tilde R_h\|_{L^2(\Omega_{1,\rho})}\big)\nonumber \\
&\leq C\liminf_{h\to 0} h=0\,.
\end{align*}
Moreover, by \OOO \eqref{eq:r_h_R_h_order_1}, \EEE\eqref{eq:R_h_construcc}, \eqref{eq:partition_of_unity_2}--\eqref{eq:partition_of_unity_1}, \OOO \eqref{eq:neighbouring}, \EEE \eqref{general_differences_rotations2}, and \eqref{localized_elastic_energy_on_good_cuboids}, 
\begin{align}\label{eq:dist_h_to_SO(3)}
\int_{\Omega_{1,\rho}}\mathrm{dist}^2(\tilde R_h\OOO ,\EEE SO(3))\,\mathrm{d}x&= h^{-1}\int_{\Omega_{h,\rho}}\mathrm{dist}^2(R_h\OOO,\EEE SO(3))\,\mathrm{d}x\leq h^{-1}\sum_{i\in I^h_{\rm{int}}}\int_{Q_{h}(i)\cap \Omega_{h,\rho}}|R_h-R_{i,h}|^2\,\mathrm{d}x \nonumber \\
&\leq   h^{-1}\sum_{i\in I^h_{\rm{int}}}\int_{Q_{h}(i)\cap \Omega_{h,\rho}}\Big|\sum_{j\in\mathcal{N}(i)}\psi_h^j(x)(R_{j,h}-R_{i,h})\Big|^2\,\mathrm{d}x \nonumber \\
&\leq Ch^{-1}h^3\sum_{i\in I^h_{\rm{int}}}\ \sum_{j\in\mathcal{N}(i)}|R_{j,h}-R_{i,h}|^2\leq Ch^{-1}\sum_{i\in I^h_{\rm{int}}}\ \sum_{j\in \mathcal{N}(i)}(\eps_{i,h}+\eps_{j,h})\nonumber\\
&\leq Ch^{-1}\sum_{i\in I^h_{\rm{g}}}\eps_{i,h}\leq Ch^2\,.
\end{align}
By passing to the limit as $h\to 0$ in \eqref{eq:dist_h_to_SO(3)}, and using \eqref{eq:convergence_of_r_h_R_h}(ii), we obtain 
\begin{equation*}
\int_{\Omega_{1,\rho}}\mathrm{dist}^2(R_\rho\OOO,\EEE SO(3))\,\mathrm{d}x=\lim_{h\to 0}\int_{\Omega_{1,\rho}}\mathrm{dist}^2(\tilde R_h\OOO,\EEE SO(3))\,\mathrm{d}x=0\Rightarrow\mathrm{dist}(R_\rho\OOO,\EEE SO(3))=0\ \text{ a.e. in } \Omega_{1,\rho}\,.
\end{equation*}
This \MMM concludes the proof of \EEE \eqref{eq:structure_of_limiting_fields}(i). We now get that 
\begin{equation}\label{eq:y_h_and_grad_conv}
y_h\to 	r_\rho \ \text{in measure on }  \Omega_{1,\rho}\,, \quad \nabla_hy_h\to 	R_\rho \ \text{in measure on }  \Omega_{1,\rho}\,,
\end{equation}
\OOO which \EEE follows easily from \eqref{eq:convergence_of_r_h_R_h}(i),(ii), \eqref{eq:r_h_R_h_order_1}, \eqref{from_v_to_y}, and \eqref{almost_the same_volume_and_control_of_energy}(iv) via a change of variables.  In particular, using \eqref{admissible_configurations_h_level}, \eqref{eq: nonlinear energy}(iv), \eqref{eq: newenergy}, and \eqref{uniform_rescaled_energy_bound}, we obtain
\[\sup_{h>0}\left(\|\nabla y_h\|_{L^2(\Omega_{1,\rho})}+ \H^2(J_{y_h}) +\|y_h\|_{L^\infty(\Omega_{1,\rho})}\right)\leq  C\,,\]
so that \eqref{eq:y_h_and_grad_conv} and \textit{Ambrosio's closure theorem} \cite[Theorems 4.7]{Ambrosio-Fusco-Pallara:2000} lead to \eqref{eq:structure_of_limiting_fields}(ii).
\EEE

Recalling \eqref{Omega_h_local}, the convergence in \eqref{eq:y_h_and_grad_conv}, together with a monotonicity argument as $\rho\to 0$, allows us to define fields
$r\in SBV^2(\Omega;\R^3)$ and $R\in SBV^2(\Omega;\R^{3\times 3})$, such that 
\begin{align}\label{eq:structure_of__global_limiting_fields}
\begin{split}
\rm{(i)}&\quad\partial_3 r= 0\ \ \OOO \L^3\text{-a.e. in }\EEE \Omega\,, \ \ R\in SO(3) \ \text{with } \partial_3 R= 0\ \ \OOO \L^3\text{-a.e. in }\EEE \Omega\,,\\
\rm{(ii)}&\quad \nabla'r=R'\,,\ \text{where } R':=\big(\OOO Re_1|Re_2\EEE\big)\in SBV^2(\Omega;\R^{3\times2})\,,
\end{split}
\end{align} 
so that in particular $\OOO Re_3\EEE =\partial_1r\wedge \partial_2r$, and 
\begin{equation}\label{eq:y_h_and_grad_global_conv}
y_h\to 	r \ \text{in measure on }  \Omega\,, \quad \nabla_hy_h\to 	R \ \text{in measure on }  \Omega\,.
\end{equation}
We next prove that
\begin{equation}\label{eq:L2_equiintegrability}
\nabla_hy_h \to R \text{ strongly in }  L^2(\Omega;\mathbb{R}^{3\times3})\,.
\end{equation}
Indeed, defining $L_h \defas \lbrace \OOO x\in \Omega: \EEE  |\nabla_{h}y_{h}\OOO(x)\EEE|> 2\sqrt{3}\rbrace$, by \eqref{energy_bound_in_the_order_1_domain} \OOO and \eqref{eq: nonlinear energy}(iv) \EEE we get 
\begin{equation}\label{eq:for_equiint}
3\mathcal{L}^3(L_h) \le       \frac{1}{4}\int_{L_h}|\nabla_{h}y_{h}|^2\,\mathrm{d}x \leq \int_\Omega \mathrm{dist}^2(\nabla_{h}y_{h},SO(3))\,\mathrm{d}x \leq C{h^2}\,.
\end{equation}
This shows that $\chi_{\Omega \setminus L_h} \to 1$ boundedly in measure on $\Omega$ and thus, in view of \eqref{eq:y_h_and_grad_global_conv}, we get $\OOO\chi_{\Omega \setminus L_h}\EEE\nabla_hy_h  \to R$ strongly in $L^2(\Omega;\mathbb{R}^{3\times3})$. By \eqref{eq:for_equiint} we further get $\Vert \OOO \chi_{L_h} \EEE \nabla_hy_h\Vert_{L^2(\Omega)} \le Ch$, which concludes the proof of \eqref{eq:L2_equiintegrability}. 

Hence, setting $\tilde y:=r$, collecting \eqref{eq:structure_of__global_limiting_fields}--\eqref{eq:L2_equiintegrability} and recalling \MMM the uniform bound on deformations assumed in \EEE \eqref{initial_admissible_configurations}, we derive \eqref{compactness_properties}(ii)-(iii). This concludes the proof of compactness.
\end{step}
\end{proof}

\begin{corollary}\label{compactness_for_w_h}
In the setting of Proposition \ref{Lipschitz_replacement}, let $(\tilde w_h)_{h>0}\subset SBV^2(\Omega_{1,\rho};\R^3)$ be defined by
\begin{equation}\label{eq:w_h_rescaling}
\tilde w_h(x):=w_h(T_hx)\,,
\end{equation}
where $w_h \in  SBV^2(\Omega_{h,\rho};\R^3)$ is  as in \eqref{w_h_almost_the same} and $T_h$ as in \eqref{anisotropic_dilation}.   Then, for   $(y,V)\in \mathcal{A}$ given in Theorem \ref{thm:main_kirchhoff_cptness}, we also have, up to subsequences (not relabeled),
\begin{align}\label{w_h_comp}
\begin{split}
\rm{(i)}\quad& \tilde w_{h}\longrightarrow \tilde y  \text{ in  } L^1(\Omega_{1,\rho};\R^3)  \,,\\
\rm{(ii)}\quad& \nabla_h \tilde w_{h}\to \big(\nabla'\tilde y,\OOO \partial_1 \tilde y\wedge \partial_2\tilde y\EEE\big)  \ \text{ strongly in } L^2(\Omega_{1,\rho};\R^{3\times 3})\,.
\end{split}
\end{align}
\end{corollary}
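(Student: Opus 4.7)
The proof plan is to transfer the convergences of $(y_h)$ and of the piecewise constant approximations $(R_h)$ of its gradient — both established in the course of proving Theorem \ref{thm:main_kirchhoff_cptness} — to the sequence $(\tilde w_h)$, using the fact that, up to an asymptotically vanishing set, $\tilde w_h$ and $y_h$ agree, and the $L^2$-distance between $\nabla_h \tilde w_h$ and $\tilde R_h$ is quantitatively small.

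I would first verify \eqref{w_h_comp}(i). By a change of variables, the volume estimate in \eqref{w_h_almost_the same}(i) rescales to
\begin{equation*}
\mathcal{L}^3\big(\{\tilde w_h \neq y_h\}\cap \Omega_{1,\rho}\big) = h^{-1}\mathcal{L}^3\big(\{w_h\neq v_h\}\cap \Omega_{h,\rho}\big) \longrightarrow 0\,.
\end{equation*}
Since $\|\tilde w_h\|_{L^\infty(\Omega_{1,\rho})} \le C$ by \eqref{w_h_almost_the same}(iii) and $\|y_h\|_{L^\infty(\Omega)}\le M$ by \eqref{admissible_configurations_h_level}, this yields $\|\tilde w_h - y_h\|_{L^1(\Omega_{1,\rho})}\to 0$. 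Combining with \eqref{compactness_properties}(ii) gives $\tilde w_h \to \tilde y$ in $L^1(\Omega_{1,\rho};\R^3)$.

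For \eqref{w_h_comp}(ii), the plan is to compare $\nabla_h \tilde w_h$ with $\tilde R_h$ and then use that $\tilde R_h \to R$ strongly in $L^2(\Omega_{1,\rho})$. Since $\tilde w_h = w_h \circ T_h$, one checks that $\nabla_h \tilde w_h(x) = (\nabla w_h)(T_h x)$, so by change of variables \eqref{w_h_almost_the same}(ii) yields
\begin{equation*}
\int_{\Omega_{1,\rho}}|\nabla_h \tilde w_h - \tilde R_h|^2\,\mathrm{d}x = h^{-1}\int_{\Omega_{h,\rho}}|\nabla w_h - R_h|^2\,\mathrm{d}x \le Ch^2 \longrightarrow 0\,.
\end{equation*}
Thus it remains to show $\tilde R_h \to R := (\nabla'\tilde y, \partial_1\tilde y\wedge\partial_2\tilde y)$ strongly in $L^2(\Omega_{1,\rho};\R^{3\times 3})$. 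This is essentially done in Step (2) of the proof of Theorem \ref{thm:main_kirchhoff_cptness}: the bounds \eqref{eq:r_h_SBV_bound3.0} together with Ambrosio's $SBV$ compactness theorem give $\tilde R_h \to R_\rho$ strongly in $L^2(\Omega_{1,\rho})$, and the identification $R_\rho = R$ follows from the identifications \eqref{eq:structure_of__global_limiting_fields}(ii) together with \eqref{eq:y_h_and_grad_global_conv}. Alternatively, one can argue directly: the second line of \eqref{almost_the same_volume_and_control_of_energy}(iv) rescales to $\mathcal{L}^3(\{|\nabla_h y_h - \tilde R_h|>\theta_h\}\cap\Omega_{1,\rho})\to 0$, so $\tilde R_h - \nabla_h y_h \to 0$ in measure; combined with the uniform $L^\infty$-bound on $\tilde R_h$ and the $L^2$-convergence of $\nabla_h y_h$ (which supplies equi-integrability), one obtains $\tilde R_h - \nabla_h y_h \to 0$ in $L^2$, and then \eqref{compactness_properties}(iii) concludes.

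I do not expect any substantial obstacle: the corollary is essentially a bookkeeping consequence of Proposition \ref{Lipschitz_replacement} and the proof of Theorem \ref{thm:main_kirchhoff_cptness}, since the bounds \eqref{w_h_almost_the same}(i),(ii) are quantitatively strong enough (they dominate the rescaling factors $h^{-1}$ coming from the change of variables). The only mild subtlety is to spell out that $\tilde R_h$ indeed converges strongly in $L^2$ to the same limit $R$ identified in Theorem \ref{thm:main_kirchhoff_cptness}, which requires either re-invoking the compactness of the proof in Step (2) or using the measure convergence together with equi-integrability as sketched above.
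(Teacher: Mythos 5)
Your argument is correct and follows essentially the same route as the paper's proof: both reduce \eqref{w_h_comp} to the quantitative bounds \eqref{w_h_almost_the same}(i),(ii) via the change of variables $T_h$, together with the strong $L^2$-convergence of $\tilde R_h$ already established in \eqref{eq:convergence_of_r_h_R_h}(ii) and its identification with $(\nabla'\tilde y,\partial_1\tilde y\wedge\partial_2\tilde y)$ from Step (2) of the compactness proof. The only cosmetic difference is that the paper phrases the conclusion via Ambrosio's compactness theorem applied directly to $\tilde w_h$, whereas you compare $\tilde w_h$ with $y_h$ and $\nabla_h\tilde w_h$ with $\tilde R_h$ explicitly; both are valid and rest on the same estimates.
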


\begin{proof}
Using \eqref{eq:r_h_SBV_bound1.0}--\eqref{eq:r_h_SBV_bound2.0} with $\tilde w_h$ in place of $\tilde r_h$ (cf.~\eqref{w_h_almost_the same}(iii)) and once again \textit{Ambrosio's SBV compactness theorem}, the corollary follows from \eqref{w_h_almost_the same}(i),(ii) and \eqref{almost_the same_volume_and_control_of_energy}(iv), after a simple change of variables,  together with \eqref{eq:r_h_R_h_order_1} and \eqref{eq:convergence_of_r_h_R_h}(ii).  
 \end{proof}

\section{Proof of Theorem \ref{thm:main_kirchhoff_gamma_conv}{\rm(i)}}\label{gamma_liminf}

In this section we give the proof of the lower bound  of Theorem \ref{thm:main_kirchhoff_gamma_conv}, for which we prove separately the lower bound for the bulk and the surface part of the energy.   Recalling Definition \ref{eq:tau_convergence}, we consider $(y_{h}, V_{h})_{h>0}$ and $(y, V)\in \mathcal{A}$ such that $(y_{h}, V_{h}) \overset{\tau}{\longrightarrow}(y,V)$, i.e., \eqref{compactness_properties}(i)--(iii) hold true. We start with  the lower bound of the elastic energy.

\begin{proposition}\label{elastic_lower_bound}
Suppose that  $(y_{h}, V_{h})\overset{\tau}{\longrightarrow}(y,V)$ for some  $(y,V)\in \mathcal{A}$, \OOO cf.~\eqref{limiting_admissible_pairs}. \EEE Then, 
\begin{equation}\label{lower_bound}
\liminf_{h\to 0} \Big(h^{-2}\int_{\Omega\setminus \overline{V_h} }W(\nabla_hy_h)\,\mathrm{d}x\Big)\geq 
\frac{1}{24}\int_{S\setminus V}\Q_2(\mathrm{II}_y(x'))\,\mathrm{d}x'\,,
\end{equation} 
where we also recall \MMM the definition of $\mathrm{II}_y$ in  \eqref{def:second_FF}. \EEE
\end{proposition}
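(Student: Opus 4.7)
The strategy is the bending-regime lower bound argument of Friesecke--James--M\"uller \cite{friesecke2002theorem}, adapted using the Sobolev replacements from Proposition \ref{Lipschitz_replacement}. I would first fix $0<\rho<\rho_0$ and work on $\Omega_{1,\rho}\setminus\tilde V$, sending $\rho\to 0^+$ at the end. Given $(v_h,E_h)$ associated to $(y_h,V_h)$ via \eqref{order_1_domain}--\eqref{from_v_to_y}, Proposition \ref{Lipschitz_replacement} produces a Sobolev approximation $w_h$ and an approximate rotation field $R_h$ with $\int_{\Omega_{h,\rho}}|\nabla w_h-R_h|^2\,\mathrm{d}x\le Ch^3$. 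Rescaling via $\tilde w_h(x):=w_h(T_hx)$ and $\tilde R_h(x):=R_h(T_hx)$, I define the scaled strain
\begin{equation*}
G_h \;:=\; h^{-1}\bigl(\tilde R_h^{\,T}\, \nabla_h \tilde w_h - \mathrm{Id}\bigr)\,.
\end{equation*}
A change of variables shows that $(G_h)_{h>0}$ is uniformly bounded in $L^2(\Omega_{1,\rho};\R^{3\times 3})$, so up to subsequences $G_h\rightharpoonup G$ weakly in $L^2$. By Corollary \ref{compactness_for_w_h} and \eqref{eq:structure_of__global_limiting_fields}, $\tilde R_h\to R:=(\nabla'\tilde y\mid \partial_1\tilde y\wedge \partial_2\tilde y)\in SO(3)$ strongly in $L^2$ with $\partial_3 R=0$, and moreover $\partial_3 \tilde R_h = O(h)$ in $L^2$.

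The second step is the identification of $G$. For $\alpha\in\{1,2\}$, a distributional computation of $\partial_3(G_h e_\alpha)$ --- exploiting that $\partial_3 \tilde R_h=O(h)$ in $L^2$ and that the contribution of $J_{\tilde w_h}$ is negligible when tested against sufficiently regular, compactly supported test functions (thanks to the cubic structure \eqref{w_h_almost_the same}(iv) and the smallness $\mathcal{H}^2(J_{\tilde w_h})\le C$) --- yields in the limit
\begin{equation*}
\partial_3(G e_\alpha) \;=\; R^T \partial_\alpha\bigl(\partial_1\tilde y\wedge \partial_2\tilde y\bigr)\,.
\end{equation*}
Hence the first two columns of $G$ are affine in $x_3$, and differentiating the isometric constraint $(\nabla'\tilde y)^T \nabla'\tilde y = \mathrm{Id}_2$ (which holds $\L^2$-a.e.\ on $S\setminus V$) gives that the upper $2\times 2$ block of $\mathrm{sym}(G_0)$ vanishes, so that, in view of \eqref{def:second_FF},
\begin{equation*}
(\mathrm{sym}\,G)_{\alpha\beta}(x',x_3) \;=\; -x_3\,(\mathrm{II}_{\tilde y})_{\alpha\beta}(x')\,, \qquad \alpha,\beta\in\{1,2\},\ x'\in S\setminus V\,.
\end{equation*}

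To close the argument, I would invoke the standard Taylor expansion $h^{-2}W(\nabla_h\tilde w_h) = \tfrac12 \mathcal{Q}_3(G_h) + o(1)|G_h|^2$, valid on the exhausting set $\{|G_h|\le\delta/h\}$ by \eqref{eq: nonlinear energy}(iii) and frame indifference. Combining this with $W\ge 0$, \eqref{w_h_almost_the same}(i) (so that $\nabla y_h = \nabla \tilde w_h$ on an asymptotically full-measure portion of $\Omega_{1,\rho}\setminus\tilde V$), and weak $L^2$-lower semicontinuity of the convex functional $G\mapsto \int\mathcal{Q}_3(G)$, one obtains
\begin{equation*}
\liminf_{h\to 0}\, h^{-2}\int_{\Omega\setminus\overline{V_h}} W(\nabla_h y_h)\,\mathrm{d}x \;\ge\; \tfrac{1}{2}\int_{\Omega_{1,\rho}\setminus\tilde V}\mathcal{Q}_3(G)\,\mathrm{d}x\,.
\end{equation*}
Since $\mathcal{Q}_3$ vanishes on $\R^{3\times 3}_{\mathrm{skew}}$, only $\mathrm{sym}\,G$ enters; minimizing pointwise the third-row/column entries of $\mathrm{sym}\,G$ (which are unconstrained by the isometry of $\tilde y$) via \eqref{def:Q_2} gives $\mathcal{Q}_3(G)\ge \mathcal{Q}_2(-x_3\,\mathrm{II}_{\tilde y})=x_3^2\,\mathcal{Q}_2(\mathrm{II}_{\tilde y})$. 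Fubini combined with $\int_{-1/2}^{1/2}x_3^2\,\mathrm{d}x_3 = 1/12$ produces the factor $1/24$, and sending $\rho\to 0^+$ yields \eqref{lower_bound}. The main technical obstacle lies in the identification of $G$: namely, justifying the distributional $x_3$-derivative computation in the presence of the non-trivial (but small) jump set $J_{\tilde w_h}$, and the cancellation of the sym upper $2\times 2$ block of $G_0$ through the a.e.\ isometric constraint for $\tilde y \in SBV^{2,2}_{\mathrm{isom}}(S;\R^3)$.
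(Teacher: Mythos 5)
Your overall strategy is the same as the paper's (Sobolev replacement $\tilde w_h$, scaled strain $G_h\rightharpoonup G$, linearization via Taylor expansion, identification of the $x_3$-dependence of $G$, and the $\int_{-1/2}^{1/2}x_3^2\,\mathrm{d}x_3=1/12$ bookkeeping), but the identification step — which you yourself single out as the main obstacle — contains two genuine gaps. First, the reason the jump set does not pollute the $x_3$-derivative of $G_h e_\alpha$ is \emph{not} the "smallness" of $\mathcal H^2(J_{\tilde w_h})$: that quantity is merely bounded, and an $O(1)$ singular measure in a distributional derivative does not vanish in the limit. What actually saves the argument is purely geometric: by \eqref{w_h_almost_the same}(iv) the jump set of $w_h$ lies on $\Omega_{h,\rho}\cap\bigcup\partial Q_h(i)$, and since the horizontal faces of the cubes $Q_h(i)$ fall outside $\Omega_{h,\rho}$, the jump set consists only of lateral faces, i.e.\ has normal orthogonal to $e_3$. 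The paper exploits this by replacing your distributional computation with finite difference quotients in $x_3$: the quotient $f_h(x',x_3)=(hz)^{-1}(\tilde w_h(x',x_3+z)-\tilde w_h(x',x_3))$ equals $\int_0^1 h^{-1}\partial_3\tilde w_h(x',x_3+tz)\,\mathrm dt$ by slicing (no jumps in the vertical direction), converges strongly to $b_{\tilde y}$, and Ambrosio's closure theorem (where a \emph{bounded} jump set is enough) then yields $\nabla' f_h\rightharpoonup\nabla' b_{\tilde y}$ and hence the affine structure $\tilde G(x',x_3)=\tilde G(x',0)+x_3H(x')$ with $H=(\nabla'\tilde y|b_{\tilde y})^T\nabla' b_{\tilde y}$. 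You should also note that the paper defines $G_h$ with the \emph{piecewise constant} rotation field $\bar R_h\in SO(3)$ rather than the mollified $\tilde R_h$ (which is only approximately a rotation); exact frame indifference $W(\nabla_h\tilde w_h)=W(\mathrm{Id}+hG_h)$ is what licenses the Taylor expansion, and the comparison $h^{-2}\int|\tilde R_h-\bar R_h|^2\le C$ is proved separately.

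Second, your claim that the isometry constraint forces the upper $2\times2$ block of $\mathrm{sym}\,G(\cdot,0)$ to vanish, so that $(\mathrm{sym}\,G)_{\alpha\beta}=-x_3(\mathrm{II}_y)_{\alpha\beta}$ pointwise, is false in general. The constraint $(\nabla' y)^T\nabla' y=\mathrm{Id}_2$ pins down only the zeroth-order in-plane metric of the limit; $G(\cdot,0)$ records first-order in-plane strains of the approximating sequence, which are unconstrained by the limit deformation (a competitor can carry an extra $O(h)$ in-plane stretch without changing $y$). Consequently your pointwise identity $\Q_3(G)\ge x_3^2\Q_2(\mathrm{II}_y)$ does not hold. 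The correct (and standard) route, which the paper follows in \eqref{eq:almost_final_liminf}, is to expand $\Q_2(G')=\Q_2(G'(x',0))+2x_3B(G'(x',0),H'(x'))+x_3^2\Q_2(H')$, observe that the cross term integrates to zero over the symmetric $x_3$-interval, and simply drop the nonnegative term $\Q_2(G'(x',0))$. With these two repairs your argument becomes the paper's proof.
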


\begin{proof}
Since it is not restrictive to assume that the sequence of total energies $(\E^{h}(y_{h}, V_{h}))_{h>0}$ is bounded, i.e., that \eqref{Lipschitz_replacement_energy_bound} holds, we can apply Proposition \ref{Lipschitz_replacement} for $\rho >0$ small and the sequence $(v_h, E_h)_{h>0}$ related to $(y_h,V_h)_{h>0}$ via 
\eqref{order_1_domain}--\eqref{from_v_to_y}. 

\MMM Recalling the definition of $I^h_{\rm{int}}$ in \eqref{eq:interior_indices}, \EEE we introduce the sequence of piecewise constant rotation fields $(\bar R_h)_{h>0}\subset SBV^2(\Omega_{1,\rho};\R^{3\times 3})$,  defined by  
\begin{align*}
\begin{split}
\bar R_h(x) := \begin{cases}  \mathrm{Id}& \text{ if } x\in T_{1/h}(Q_{h}(i))\cap\Omega_{1,\rho}\ \text{ for some } i\notin I^h_{\rm{int}}\,,\\
R_{i,h} & \text{ if } x\in T_{1/h}(Q_{h}(i))\cap\Omega_{1,\rho}\ \text{ for some } i\in I^h_{\rm{int}}\,.
\end{cases}
\end{split}
\end{align*}
 With a similar argument as in \eqref{eq:gradient_bound_z_h_1}, \OOO recalling \eqref{eq:r_h_R_h_order_1}, \EEE\eqref{eq:R_h_construcc}, \eqref{eq:partition_of_unity_2}--\eqref{eq:partition_of_unity_1}, \eqref{eq:neighbouring}, \eqref{general_differences_rotations2},  and \eqref{localized_elastic_energy_on_good_cuboids}, we estimate  
\begin{align}\label{eq:rescaled_fields_equibounded_1}
h^{-2}\int_{\Omega_{1,\rho}}|\OOO\tilde R\EEE_h-\bar R_h|^2\,\mathrm{d}x&=h^{-3}\int_{\Omega_{h,\rho}}|R_h-\bar R_h|^2\OOO=\EEE h^{-3}\sum_{i\in I^h_{\rm{int}}}\int_{\Omega_{h,\rho}\cap Q_{h}(i)}\Big|\Big(\sum_{j\in \mathcal{N}(i)}\psi_h^jR_{j,h}\Big)-R_{i,h}\Big|^2 \nonumber\\
&\leq h^{-3}\sum_{i\in I^h_{\rm{int}}}\int_{\OOO Q_{h}(i)\EEE}\Big|\sum_{j\in \mathcal{N}(i)}\psi_h^j(R_{j,h}-R_{i,h})\Big|^2\leq C\sum_{i\in I^h_{\rm{int}}}\sum_{j\in \mathcal{N}(i)}|R_{j,h}-R_{i,h}|^2\nonumber\\
&\leq Ch^{-3}\sum_{i\in I^h_{\rm{int}}}\sum_{j\in \mathcal{N}(i)}(\eps_{j,h}+\eps_{i,h})\leq Ch^{-3}\sum_{i\in I^h_{\rm{g}}}\eps_{i,h}\leq C\,.
\end{align}
Thus, combining \eqref{eq:rescaled_fields_equibounded_1} with \eqref{w_h_almost_the same}(ii) and using a change of variables, we deduce \EEE
\begin{equation}\label{eq:rescaled_fields_equibounded}
\sup_{h>0}\|h^{-1}(\bar R_h^T\nabla_h\tilde w_h-\mathrm{Id})\|_{L^2(\Omega_{1,\rho})}\leq C\,,
\end{equation}
where $\tilde w_h$ is defined as in \eqref{eq:w_h_rescaling}. 
In particular,  we deduce that there exists $G\in L^2(\Omega_{1,\rho};\R^{3\times 3})$ such that, up to a  subsequence (not relabeled),
\begin{equation}\label{eq:rescaled_fields_convergence}
G_h:=\frac{\bar R_h^T\nabla_h\tilde w_h-\mathrm{Id}}{h}\rightharpoonup G\ \ \text{weakly in } L^2(\Omega_{1,\rho};  \MMM \R^{3\times 3})\,. \EEE
\end{equation}
 
We can then proceed with a classical linearization argument, cf.~\cite{friesecke2002theorem, SantiliSchmidt2022}, which we nevertheless detail in Appendix \ref{sec: standard_liminf} for the reader's convenience. This leads to

\begin{equation}\label{aux_lower_bound}
\liminf_{h\to 0} \Big(h^{-2}\int_{\Omega\setminus \overline{V_h} }W(\nabla_hy_h)\,\mathrm{d}x\Big)\geq 
\frac{1}{2}\int_{\Omega_{1,\rho}}\Q_3(G)\,\mathrm{d}x\geq 
\frac{1}{2}\int_{\Omega_{1,\rho}}\Q_2(G')\,\mathrm{d}x\,,
\end{equation} 
where \EEE we recall \eqref{def:Q_2}, and where for a matrix $F \in \R^{3 \times 3}$ we   use the notation $F'\in \R^{2\times 2}$ for its upper-leftmost ($2\times2$)-block.

Hence, as in the purely elastic setting, we are confronted with identifying the upper-left block $G'$ of the weak limit $G$ in \eqref{eq:rescaled_fields_convergence}. Although the sequence $(\tilde w_h)_{h>0}$ is not Sobolev in the entire domain, due to its construction, cf.~\eqref{w_h_almost_the same} \OOO and \eqref{eq:w_h_rescaling}\EEE, it does not exhibit jumps in the transversal $x_3$-direction, so that the strategy originally devised in \cite[Proof of Theorem 6.1(i)]{friesecke2002theorem} is applicable. We next give the details of the proof.

Let $\tilde G_h:=(\OOO G_he_1|G_he_2\EEE)\in \R^{3\times 2}$ denote the matrix of the first two columns of $G_h$, and similarly \MMM $\tilde{G}$ \EEE for the weak $L^2$-limit $G$. In the slightly smaller domain $\Omega_{1,2\rho}$, we consider the finite difference quotients in $x_3$-direction, $(H_h)_{h>0}\subset L^2(\Omega_{1,2\rho};\R^{3\times 2})$, defined by 
\begin{equation}\label{eq:H_diff_quotient}
H_h(x',x_3):=\frac{\tilde G_h(x',x_3+z)-\tilde G_h(x',x_3)}{z}=(\bar R_h)^T\frac{\frac{1}{h}\nabla'\tilde w_h(x',x_3+z)-\frac{1}{h}\nabla'\tilde w_h(x',x_3)}{z}\,, 
\end{equation}
where $|z|\leq \rho$ ($z\neq 0$). In view of \eqref{eq:rescaled_fields_convergence}, we have 
\begin{equation}\label{eq:weak_conv_H_diff_quotient}
H_h\rightharpoonup H:=\frac{\tilde G(x',x_3+z)-\tilde G(x',x_3)}{z} \ \text{ weakly in } L^2(\Omega_{1,2\rho};\R^{3\times 2})\,. 
\end{equation}
By \eqref{eq:rescaled_fields_equibounded} and \eqref{w_h_comp}(ii), $(\bar R_h)_{h>0}$ converges boundedly in measure to $(\nabla'\tilde y|b_{\tilde y}) \in SBV^2(\Omega_{1,2\rho};\R^{3\times 3})$, where 
\begin{equation}\label{eq:tilde b}
b_{\tilde y}:=\OOO \partial_1 \tilde y\wedge \partial_2\tilde y\EEE\,,
\end{equation}
so that by \eqref{eq:H_diff_quotient}--\eqref{eq:tilde b} we obtain 
\begin{equation}\label{eq:weak_conv_diff_quotient}
\frac{\frac{1}{h}\nabla'\tilde w_h(x',x_3+z)-\frac{1}{h}\nabla'\tilde w_h(x',x_3)}{z}	\rightharpoonup (\nabla'\tilde y| b_{\tilde y})H\ \text{ weakly in } L^2(\Omega_{1,2\rho};\R^{3\times 2})\,.
\end{equation}
In order to identify the weak limit $H$, we can then argue as in the end of the proof of the lower bound for the elastic energy in \cite[Section 5]{SantiliSchmidt2022}. Setting
\begin{equation}\label{eq:difference_quotient}
f_h(x',x_3)   :=   \frac{\tilde w_h(x',x_3+z)-\tilde w_h(x',x_3)}{hz}\in SBV^2(\Omega_{1,2\rho};\R^3),
\end{equation}
we observe that, by \eqref{w_h_almost_the same}(iv) and a slicing argument\OOO,\EEE 
\begin{equation*}
f_h(x',x_3)=\int_{0}^1\frac{1}{h}\partial_3\tilde w_h(x',x_3+tz)\,\mathrm{d}t\,.
\end{equation*}
By Corollary \ref{compactness_for_w_h}\OOO, see \eqref{w_h_comp} and \eqref{eq:tilde b}, \EEE we have that $\frac{1}{h}\partial_3\tilde w_h\OOO \rightarrow \EEE b_{\tilde{y}}$ \OOO strongly in \EEE $L^2(\Omega_{1,\rho};\R^{\OOO3\EEE})$, and therefore 
\begin{equation*}
f_h\to  \int_{0}^1  b_{\tilde y} (\cdot,\cdot +tz)\,\mathrm{d}t =       b_{\tilde y} \ \text{ strongly in } L^2(\Omega_{1,2\rho};\R^3)\,, 
\end{equation*}
where the last equality follows from the fact that $b_{\tilde y}$ is independent of $x_3$. \OOO By \eqref{eq:weak_conv_diff_quotient}, \eqref{w_h_comp}(ii), and \EEE \eqref{w_h_almost_the same}(iii), we have
\begin{equation*}
\sup_{h>0} \| \nabla f_h \|_{L^2(\Omega_{1,2\rho})} <+\infty \quad \text{and} \quad \sup_{h>0}\mathcal{H}^2(J_{f_h})<+\infty\,,
\end{equation*}
so that, by the basic closure theorem in $SBV$, cf.~\cite[Theorem 4.7]{Ambrosio-Fusco-Pallara:2000}, we deduce that  
\begin{equation}\label{eq:gradient_f_h_conv}
\nabla'f_h\rightharpoonup \nabla'b_{\tilde y} \ \text{ weakly in } L^2(\Omega_{1,2\rho};\R^{3\times 3}) \,.
\end{equation}
Combining \eqref{eq:weak_conv_diff_quotient}, \eqref{eq:gradient_f_h_conv}, the fact that $(y,V)\in \mathcal{A}$  (see \eqref{limiting_admissible_pairs}), and recalling the \MMM identification \eqref{eq:y_ident}, \EEE  and \eqref{def:second_FF}, we obtain 
\begin{equation*}
H=(\nabla' \tilde y| \MMM b_{\tilde{y}})^T\nabla'b_{\tilde{y}} \EEE  \in SBV^2(\Omega_{1,2\rho};\R^{3\times 2})\,.
\end{equation*}
Thus, \eqref{eq:weak_conv_H_diff_quotient} implies 
\begin{equation}\label{eq:identification_of_G}
\tilde G(x', x_3)=\tilde G(x', 0) +x_3H (x')\   \text{ for } (x',x_3) \in\Omega_{1,2\rho}\,.
\end{equation}
Using \OOO the bilinearity of $\Q_2$ in \eqref{def:Q_2}\EEE, \eqref{eq:identification_of_G}, that $\int_{-1/2+2\rho}^{1/2-2\rho} x_3\,\mathrm{d}x_3=0$, \OOO and the definition of the second fundamental form \eqref{def:second_FF}, \EEE we obtain
\begin{align}\label{eq:almost_final_liminf}
\begin{split}	
\OOO \frac{1}{2}\int_{\Omega_{1,\rho}}\Q_2(G')\,\mathrm{d}x
&\OOO =\EEE \frac{1}{2}\int_{\Omega_{1,2\rho}} \Q_2(G'(x',0))\,\mathrm{d}x+\frac{1}{2}\int_{\Omega_{1,2\rho}} x_3^{2}\Q_2(\mathrm{II}_y\OOO(x')\EEE)\,\mathrm{d}x\,,\\
&\geq \frac{1}{2}\int_{\Omega_{1,2\rho}} x_3^{2}\Q_2(\mathrm{II}_y\OOO(x')\EEE)\,\mathrm{d}x\,.
\end{split}
\end{align}
Then, \eqref{lower_bound} follows from \OOO\eqref{aux_lower_bound} and \eqref{eq:almost_final_liminf}\EEE, after letting $\rho\to 0$.
\end{proof}

\bigskip

We now proceed with the lower bound for the surface part of the energy, namely  
\begin{equation*}
\mathcal{E}^h_{\mathrm{surf}}(V_h):= \mathcal{E}^h(y_h,V_h)-h^{-2}\int_{\Omega\setminus \overline{V_h}}W(\nabla_hy_h) \, \mathrm{d}x=h^{-1}\mathcal{G}^{\kappa_h}_{\mathrm{surf}}(E_h; \Omega_h)\,,
\end{equation*}
where we refer to \eqref{rescaled_energy} and  \eqref{F_surf_energy}.  Our approach deviates significantly from the proof of lower bounds in relaxation results for energies defined on pairs \OOO of deformations and sets\EEE, cf.~\cite{BraChaSol07}, \cite{CrismaleFriedrich}, \cite{SantiliSchmidt2022}, the main reason being that our piecewise nonlinear geometric rigidity result allows for a control \OOO only \EEE in a large part of $\Omega \setminus \overline{V_h}$\EEE.

While the justification for the \OOO middle \EEE term on the right-hand side of \eqref{limiting_two_dimensional_energy} is standard, for the \OOO last \EEE one therein the argument is based on a fine blow-up analysis around jump points of $J_{(y,\nabla'y)}\setminus \partial^*V$. In particular, a delicate contradiction argument is employed to obtain the desired density lower bound of the surface energy. \OOO For technical reasons, the latter \EEE is augmented with a vanishing contribution of the elastic energy, see \eqref{eq:blow-up_measures} and \eqref{eq:density_estimate} below. A suitable (two-dimensional in nature) blow-up of the deformations, their derivatives, as well as the void sets (cf.~\eqref{eq:blow_up_void_function}), together with a De-Giorgi type argument, will allow us to identify, up to translations, an appropriate three-dimensional thin rod on which the sequence $(v_h,E_h)_{h>0}$ enjoys uniform energy bounds, related to the bending energy for thin rods with voids, see \cite[Equations (2.3) and (2.8)]{KFZ:2023}. Finally, our compactness result from \cite[Theorem 2.1]{KFZ:2023} and the particular structure of the limiting pair will allow us to conclude the contradictory argument. 
 \EEE

\begin{proposition}\label{surface_lower_bound}
Suppose that  $(y_{h}, V_{h})\overset{\tau}{\longrightarrow}(y, V)$ for some  $(y, V)\in \mathcal{A}$.  Then,
\begin{equation}\label{surface_part_lower_bound}
\liminf_{h\to 0} \E^h_{\mathrm{surf}}(V_h)\geq \H^1(\partial^*V\cap S)+2\H^1\big(J_{(y, \nabla'y)}\setminus \partial^*V\big)\,.
\end{equation} 
\end{proposition}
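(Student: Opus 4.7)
The plan is to associate to each $h>0$ a Radon measure $\mu_h$ on $\overline{S}$ encoding the rescaled surface contribution (augmented by the elastic energy, which will be useful in the blow-up step), extract a weak-$*$ limit $\mu$, and estimate its Radon--Nikodym density separately with respect to $\H^1\mres(\partial^*V\cap S)$ (giving the factor $1$) and with respect to $\H^1\mres(J_{(y,\nabla'y)}\setminus\partial^*V)$ (giving the factor $2$). Mutual singularity of these two $\H^1$-restrictions then yields~\eqref{surface_part_lower_bound}.

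Concretely, let $\pi\colon\Omega\to S$, $\pi(x_1,x_2,x_3)=(x_1,x_2)$, be the canonical projection and set $\varphi_h(\nu):=|(\nu_1,\nu_2,h^{-1}\nu_3)|$. I would define
\[
\mu_h := \pi_\#\Bigl(\varphi_h(\nu_{V_h})\,\H^2\mres(\partial V_h\cap\Omega)+\kappa_h|\bm{A}|^2\,\H^2\mres(\partial V_h\cap\Omega)+h^{-2}W(\nabla_h y_h)\,\L^3\mres(\Omega\setminus\overline{V_h})\Bigr).
\]
The uniform energy bound \eqref{uniform_rescaled_energy_bound} forces $\mu_h(\overline{S})\le C$, so up to a (non-relabeled) subsequence $\mu_h\wsto\mu$. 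The density lower bound at $\H^1$-a.e.\ point of $\partial^*V\cap S$ is fairly standard: exploiting $\chi_{V_h}\to\chi_{\tilde V}$ with $\tilde V=V\times(-\tfrac{1}{2},\tfrac{1}{2})$, whose measure-theoretic outer normal lies in $\mathrm{span}\{e_1,e_2\}$, one applies Reshetnyak's lower semicontinuity theorem to the convex positively $1$-homogeneous function $\varphi_0(\nu):=|(\nu_1,\nu_2)|\le\varphi_h(\nu)$ and integrates in $x_3$ by Fubini's theorem. Non-negativity of the elastic and curvature contributions then bounds the density of $\mu$ from below by $1$.

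The delicate part, and the main obstacle, is the density lower bound of $2$ at $\H^1$-a.e.\ $x_0'\in J_{(y,\nabla'y)}\setminus\partial^*V$. I would argue by contradiction: assume that at a suitable Lebesgue point $x_0'$ one has $\liminf_{r\to 0^+}\mu(\{x'\in S\colon|x'-x_0'|<r\})/(2r)<2-\delta$ for some $\delta>0$. Let $\nu\in\S^1$ denote the approximate jump normal at $x_0'$, and perform a three-scale blow-up adapted to $\nu$: pick a tangential scale $\ell_n\to 0$ along $\nu^\perp$, a transversal scale $r_n$ with $r_n\ll\ell_n$ along $\nu$, and a diagonal thickness $h(n)\ll r_n$; then dilate the configuration on the rectangle $R_{r_n,\ell_n}:=x_0'+\{s\nu+t\nu^\perp\colon|s|<r_n,\,|t|<\ell_n\}$ by $1/r_n$ along $\nu$ and $1/\ell_n$ along $\nu^\perp$. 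The rescaled pairs then live on $(-1,1)^2\times(-\varepsilon_n/2,\varepsilon_n/2)$ with $\varepsilon_n:=h(n)/r_n$, i.e., they become admissible three-dimensional thin rod configurations in the framework of \cite{KFZ:2023}.

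The density assumption, combined with a De~Giorgi-type slicing argument that selects good tangential slices with vanishing surface mass on their boundaries, provides a uniform rod-scale energy bound; the rod compactness result \cite[Theorem~2.1]{KFZ:2023} then produces a one-dimensional limiting pair whose midline deformation inherits a nontrivial jump of $(y,\nabla'y)$ at $x_0'$, via the strong convergence of $\nabla_h y_h$ from Theorem~\ref{thm:main_kirchhoff_cptness}. Since every jump of either the deformation or its tangent in the rod theory of \cite{KFZ:2023} has surface cost at least $2$, the hypothesis $<2-\delta$ is contradicted. I expect the hard parts to be verifying the admissibility of the blown-up pairs and the requisite curvature bound after the anisotropic dilation, coordinating the three vanishing scales $h(n),r_n,\ell_n$ so that the rescaled problem lies in the regime \eqref{rate_1_gamma_h} required by \cite[Theorem~2.1]{KFZ:2023}, and ensuring that the nontrivial jump of $(y,\nabla'y)$ at $x_0'$ is not destroyed in the passage to the blow-up limit.
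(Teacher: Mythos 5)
Your overall strategy is the one the paper uses: a blow-up of measures that include a vanishing elastic contribution, a standard Reshetnyak/Fubini argument for the density $1$ on $\partial^* V\cap S$, and a contradiction argument at jump points that reduces to the thin-rod compactness of \cite[Theorem~2.1]{KFZ:2023} and exploits the Fundamental Theorem of Calculus on the limiting rod to force $y^+=y^-$ and $R^+=R^-$. However, as written there are two concrete gaps. First, you include the \emph{full} elastic term $h^{-2}W(\nabla_h y_h)\,\L^3$ in $\mu_h$. Then your density estimates only yield $\liminf_h\big(\E^h_{\rm surf}(V_h)+h^{-2}\int W\big)\ge \H^1(\partial^*V\cap S)+2\H^1(J_{(y,\nabla'y)}\setminus\partial^*V)$, which is strictly weaker than \eqref{surface_part_lower_bound}: the elastic energy need not vanish, and it is also consumed by Proposition~\ref{elastic_lower_bound}, so it cannot be counted twice when the two lower bounds are added. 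The paper weights the elastic term by an auxiliary parameter $\eta$ (see \eqref{eq:blow-up_measures}); since the elastic energy is uniformly bounded, the spurious contribution is $O(\eta)$ and disappears when $\eta\to 0$ at the very end. You need this device (or an equivalent one).

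Second, your blow-up geometry does not produce a rod. With $h(n)\ll r_n\ll \ell_n$ and dilation factors $1/r_n$ along $\nu$ and $1/\ell_n$ along $\nu^\perp$, the rescaled domain $(-1,1)^2\times(-\varepsilon_n/2,\varepsilon_n/2)$ has two order-one directions and one thin one: it is a thin \emph{plate}, to which \cite[Theorem~2.1]{KFZ:2023} does not apply. Moreover your ordering $r_n\ll\ell_n$ makes the short direction the one \emph{across} the jump, which defeats the purpose: the contradiction requires a rod whose long axis crosses the jump, so that the absence of jumps in the rod limit (forced by the energy bound $<2-\delta/2$) propagates, via the isometry constraint and the FTC, to $y^+=y^-$ and $R^+=R^-$. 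The correct reduction uses only two scales: fix an isotropic square $Q_r$ at the jump point, partition it into stripes of tangential width exactly $h$ and normal length $r$, and select by averaging a good stripe on which simultaneously the energy per stripe is $<(2-\delta/2)h$, the void volume is small, and the $L^1$-distance of $(v_h,\nabla v_h)$ to the traces $Y^\pm$ is $o(rh^2)$. The resulting domain $(-\frac r2,\frac r2)\times(-\frac h2,\frac h2)^2$ is a genuine rod of fixed length $r$ and cross-section $h\times h$, the stripe energy bound rescales exactly to the rod energy regime, and one sends $h\to 0$ first (for fixed $r$) and $r\to 0$ afterwards. With these two corrections your plan coincides with the paper's proof.
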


\begin{proof} Let $(E_h)_{h>0}$ be the void sets associated to  $(V_h)_{h>0}$ according to \eqref{order_1_domain}. Let $(\tilde y,\tilde V)$ be the pair associated to $(y,V)$ as in \eqref{eq:y_ident} and \eqref{eq:V_ident} respectively. 

\begin{step}{(1): Blow-up argument} 	
In order to prove \eqref{surface_part_lower_bound}, we perform a blow-up argument.  \MMM Let \EEE $h>0$ and $\eta>0$ ($\eta$  will be eventually sent to $0$ after we send $h\to 0^+$). \MMM We \EEE introduce the family of Radon measures $\mu_{\eta,h}:\mathfrak{M}(S)\to \R_+$, defined by 
\begin{equation}\label{eq:blow-up_measures}
\mu_{\eta, h}(K):=\eta h^{-3}\int_{\left(K\times\left(-\frac{h}{2},\frac{h}{2}\right)\right)\setminus \overline{E}_h}W(\nabla v_h)\,\mathrm{d}x+h^{-1}\mathcal{G}^{\kappa_h}_{\mathrm{surf}}\left(E_h; K\times\left(-\frac{h}{2},\frac{h}{2}\right)\right)\,,
\end{equation}
where we recall \eqref{from_v_to_y} \MMM and \EEE \eqref{F_surf_energy}. By the assumption  that the sequence $(\mathcal{E}^h(y_h,V_h))_{h>0}$ is bounded (cf.~\OOO also \EEE the proof of Lemma \ref{elastic_lower_bound}), and after passing to a subsequence (not relabeled), we may suppose that $(\mu_{\eta,h})_{h>0}$ converges weakly* to some Radon measure $\mu_\eta$.  Let also 
\begin{equation*}
\lambda:=\H^1\llcorner_{(\partial^* V\cup J_{( y,\OOO\nabla'\EEE y)})\cap S}\,,
\end{equation*}
and $\mathrm{d}\mu_\eta/\mathrm{d}\lambda$ be the corresponding Radon-Nikodym derivative. In view of the lower semicontinuity of the mass under weak*-convergence, the equi-boundedness of the total energy, and the arbitrariness of $\eta>0$, the estimate in \eqref{surface_part_lower_bound}  will follow by proving  that for every Lebesgue point of $\mu_{\eta}$ with respect to $\lambda$ there holds
\begin{align}\label{eq:density_estimate}
\frac{\mathrm{d}\mu_\eta}{\mathrm{d}\lambda}(x_0)\geq \begin{cases}1\,, \ \text{ if } x_0\in \partial^* V\cap S\,,\\
2\,, \ \text{ if } x_0\in J_{( y,\OOO\nabla'\EEE  y)}   \setminus \partial^*  V\,. \EEE	
\end{cases}
\end{align}
Now, fix $x_0 \in (\partial^* V\cup J_{(y,\OOO\nabla'\EEE y)}) \cap S$ such that a generalized unit normal  at the point $x_0$ exists, which  we denote by $\nu(x_0)$. Since this property holds for $\mathcal{H}^{\OOO 1\EEE}$-a.e.\ point, it suffices to prove \eqref{eq:density_estimate} in this case.  Without loss of generality we assume that $x_0=0$ and $\nu \MMM (x_0) \EEE =e_1$. For $r<1$ we let $Q_r := (-\frac{r}{2} ,\frac{r}{2})^2$. Noting that $\lambda(Q_r)=r+o(r)$ as $r\to 0$,  in order to prove \eqref{eq:density_estimate}, it suffices to show that 
\begin{align}\label{eq:h_density_estimate}
\liminf_{r\to 0}\liminf_{h\to 0}	\frac{\mu_{\eta,h}(Q_r)}{r}\geq \begin{cases}1\,, \ \text{ if } x_0\in \partial^* V\cap S\,,\\
2\,, \ \text{ if } x_0\in  J_{( y,\OOO\nabla'y\EEE)}   \setminus \partial^*  V\,. \EEE	
\end{cases}
\end{align}
\end{step} 

\begin{step}{(2): Boundary of voids}	 	Regarding the first case in \eqref{eq:h_density_estimate}, for $0\in \partial^* V\cap S$, with a change of variables, \OOO cf.~\eqref{eq: newenergy}\EEE, we can estimate from below
\begin{align}\label{eq:density_1}
\begin{split}
\mu_{\eta,h}(Q_r)&\geq h^{-1}\H^2\left(\partial E_h\cap \left( Q_r\times \left(-\frac{h}{2},\frac{h}{2}\right)\right)\right)\\&=\int_{\partial V_h\cap \left(Q_r \times\left(-\frac{1}{2},\frac{1}{2}\right)\right)}\big|\big(\nu^1_{V_h}, \nu^2_{V_h}, h^{-1}\nu^3_{V_h}\big)\big|\, \mathrm{d}\H^2\geq \mathcal{H}^2\left(\partial V_h\cap \left(Q_r\times\left(-\frac{1}{2},\frac{1}{2}\right)\right)\right)\,.
\end{split}
\end{align}
Since $V_h \to \tilde V$ in $L^1(\Omega)$, the $L^1$-lower semicontinuity of the perimeter, \eqref{eq:density_1}, \eqref{eq:V_ident},  and the fact that $0\in \partial^* V\cap S$ imply  
\begin{align*}
\begin{split}
\hspace{-0.25em}\liminf_{r\to 0}\liminf_{h\to 0}	\frac{\mu_{\eta,h}(Q_r)}{r}&\geq 
\liminf_{r\to 0}\liminf_{h\to 0} \frac{ \mathcal{H}^2\left(\partial V_h\cap \left(Q_r\times\left(-\frac{1}{2},\frac{1}{2}\right)\right)\right)}{r}
\\&\geq \liminf_{r\to 0} \frac{ \mathcal{H}^2\big(\partial^{\OOO *\EEE} \tilde V\cap \left(Q_r\times\left(-\frac{1}{2},\frac{1}{2}\right)\right)\big)}{r}=\liminf_{r\to 0} \frac{ \mathcal{H}^1\left(\partial^{\OOO *\EEE}V\cap Q_r\right)}{r}\OOO=\EEE 1\,.
\end{split}
\end{align*}
\end{step}

\begin{step}{(3): Jump points} Regarding the second case in \eqref{eq:h_density_estimate}, let $0\in J_{( y,\OOO\nabla'\EEE  y)}   \setminus \partial^*  V$, in particular we have $0\in V^0$, where  by $V^0$ we denote  the set of points with two-dimensional density $0$ with respect to $ V$.  We now define   the auxiliary fields \EEE $Y_h \colon \Omega_h \to \mathbb{R}^3\times \mathbb{R}^{3\times 3}$  and $ Y  \colon S \to \mathbb{R}^3\times \mathbb{R}^{3\times 3}$ as \[{Y}_h:=  (v_h, \nabla v_h) \ \text{  and } Y:=\big({y},(\nabla' {y}, \partial_1y\wedge  \partial_2y)\big)\,,\] respectively, and observe that 
$$J_{{Y}} = J_{( y,\nabla'  y)}\,.$$ 
The assumption   $0\in V^0\cap J_{( y,\OOO\nabla'\EEE y)}   = V^0\cap J_{ Y}$  together with \eqref{compactness_properties}(ii),(iii) and a scaling argument implies  
\begin{align}\label{eq:blow_up_void_function}
\begin{split}
\mathrm{(i)}&\quad \lim_{r\to 0}  \lim_{h\to 0} \frac{\L^3\left(E_h\cap \left( Q_r\times\left(-\frac{h}{2},\frac{h}{2}\right)\right)\right)}{r^2 h}=0\,,\\[3pt]
\mathrm{(ii)}&\quad \lim_{r\to 0}  \lim_{h\to 0}  \fint_{Q_r  \times \left(-\frac{h}{2},\frac{h}{2}\right)}|Y_h- Y^{\pm}|\,\mathrm{d}x=0\,,
\end{split}
\end{align}
where 
\begin{equation}\label{eq:jump_function}
Y^{\pm}:=\begin{cases}  Y^{+}(0)\,, \quad \text{if } x_1>0\\
 Y^{-}(0)\,, \quad \text{if } x_1\leq 0\,,
\end{cases}
\end{equation}
with $ Y^+(0)=({y}^+,{R}^+)$ and $Y^{-}(0)=({y}^-,{R}^-)$ being the one-sided traces of $ Y$ at $0\in J_{Y}$. Suppose by contradiction that the desired assertion was false, i.e., there exists $ 0<\delta<1$ such that
\begin{equation*}
\liminf_{r \to 0} \liminf_{h\to 0} \frac{\mu_{\eta,h}(Q_r)}{r} < 2-\delta\,.
\end{equation*}
We will proceed to show a contradiction, namely we will show that
\begin{align}\label{eq:jump-equalities}
{\rm (a)}\quad {y}^+= {y}^- \quad  \quad \text{and} \quad \quad {\rm (b)}\quad   {R}^+= {R}^-\,.
\end{align} 
This implies that $ Y^+(0)= Y^{-}(0)$ and thus $0 \notin J_Y$: a contradiction.  Up to passing to subsequences in $r$ and $h$, for all $0<r\leq r_0(\delta)$ and $0<h\leq h_0( \OOO r\EEE )$  small enough,
\begin{align}\label{ineq:cont_ass_uniform}
\mu_{\eta,h}(Q_r)\leq(2-\delta)r\,.
\end{align}
\end{step}

\begin{step}{(4): Preparations for the proof of \eqref{eq:jump-equalities}} The following arguments will be performed for fixed $r>0$, which will be chosen sufficiently small along the proof. For $j \in \mathbb{Z}$ we define the (pairwise disjoint) stripes 
\begin{align*}
S_h(j) := hje_2+\Big(-\frac{r}{2},\frac{r}{2}\Big)\times\Big(-\frac{h}{2},\frac{h}{2}\Big)\,,
\end{align*}
and note that $S_h(j) \subseteq Q_r$ for all $|j|\leq \frac{r}{2h}-\frac{1}{2}$. We set $N_h:=\lfloor \frac{r}{2h}-\frac{1}{2}\rfloor$ and define
\begin{align} \label{def:goodstripes_surface}
\mathcal{S}_h^\mathrm{good}:=\left\{j \in \mathbb{Z} \colon |j|\leq N_h\,,\ \  \mu_{\eta,h} (S_h(j) ) <\left(2 -\frac{\delta}{2}\right)  h \right\}\,,
\end{align}
and $\mathcal{S}_h^\mathrm{bad}:=\{j \in \mathbb{Z} \colon |j|\leq N_h\} \setminus \mathcal{S}_h^\mathrm{good}$. In view of \eqref{ineq:cont_ass_uniform} and \eqref{def:goodstripes_surface}, we can estimate \[\#\mathcal{S}_h^{\rm{bad}} \leq \frac{4-2\delta}{(4-\delta)} \frac{r}{h}\,.\] 
Therefore, for $h\in (0,h_0( \OOO r \EEE )]$ small enough, we obtain
\begin{align}\label{eq:card_bad_stripes}
\#\mathcal{S}_h^{\rm{good}}=2N_h+1-\#\mathcal{S}_h^{\rm{bad}}\geq 2\left(\frac{r}{2h}-\frac{1}{2}\right) - 1   - \frac{4-2\delta}{(4-\delta)}\frac{r}{h} \geq  \frac{\delta}{4}\frac{r}{h}\,.
\end{align}
We note that \eqref{eq:card_bad_stripes} and \eqref{eq:blow_up_void_function}(i),(ii) imply that there exists $j_h\in \mathcal{S}^{\rm{good}}_h$ such that 
\begin{align}\label{eq:estimate_on_good_stripes}
\L^3\left(E_h\cap\left( S_h(j_h)\times\left(-\frac{h}{2},\frac{h}{2}\right)\right)\right)+\int_{S_h(j_h)\times\left(-\frac{h}{2},\frac{h}{2}\right)}|Y_h- Y^{\pm}|\leq\frac{4}{\delta}\sigma_r r h^2\,,
\end{align}
for all $0<r\leq r_0$ and $0<h\leq h_0$, for a modulus of continuity $\sigma_r\to 0^{\OOO +\EEE}$ as $r\to 0^{\OOO +\EEE}$.  Since $j_h\in \mathcal{S}^{\rm{good}}_h$, by the definition in \eqref{def:goodstripes_surface} and \eqref{eq:blow-up_measures} we immediately have
\begin{equation}\label{eq:energy_estimate}
\eta h^{-4}\int_{\left(S_h(j_h)\times\left(-\frac{h}{2},\frac{h}{2}\right)\right)\setminus \overline{E}_h}W(\nabla v_h)\,\mathrm{d}x+h^{-2}\mathcal{G}^{\kappa_h}_{\mathrm{surf}}\left(E_h;  S_h(j_h)\times\left(-\frac{h}{2},\frac{h}{2}\right)\right)<2-\frac{\delta}{2}\,.
\end{equation}
Introducing the notation
$$ S_{r,h,h} =   \Big( - \frac{r}{2}, \frac{r}{2} \Big) \times \Big( - \frac{h}{2}, \frac{h}{2} \Big)^2\,,$$ 
up to translation of  $E_h$ and the domain of $v_h$ by $-hj_{\OOO h\EEE} e_2$  (not relabeled), \eqref{eq:energy_estimate} is equivalent to 
\begin{equation}\label{eq:rescaled_energy_estimate}
h^{-4}\int_{S_{r,h,h}\setminus \overline{E}_h}\eta W(\nabla v_h)\,\mathrm{d}x+h^{-2}\mathcal{G}^{\kappa_h}_{\mathrm{surf}}(E_h;S_{r,h,h})< 2-\frac{\delta}{2} \,.	
\end{equation}
For $r>0$ fixed, we now apply \cite[Theorem~2.1]{KFZ:2023} to obtain 
$$((y^{\rm rod}|d_2|d_3),I) \in SBV^2_{\mathrm{isom}} \left(-\frac{r}{2},\frac{r}{2}\right) \times \mathcal{P}\left(-\frac{r}{2},\frac{r}{2}\right)$$  (cf.~the definition in \cite[(2.13)]{KFZ:2023}) such that, up to a subsequence in $h$ (not relabeled),  
\begin{align}\label{coooonv1}
&\chi_{V^{\rm rod}_{h}}\longrightarrow \chi_{V^{\rm rod}}\ \text{ in } L^1(S_{r,1,1})\,,\notag \\
\quad& y^{\rm rod}_{h} \longrightarrow \bar{y}^{\rm rod}  \text{ in  } L^1(S_{r,1,1};\R^3)  \,,
\end{align}
where $y^{\rm rod}_{h}(x) = v_h(x_1,hx_2,hx_3)$ and  $\bar{y}^{\rm rod}(x) = y^{\rm rod}(x_1)$  for $x \in S_{r,1,1}$, as well as 
\[V^{\rm rod}_{h} =\lbrace x \in S_{r,1,1} \colon (x_1,h x_2,h x_3) \in E_{h} \rbrace\,,\]
and $V^{\rm rod} = I \times (-\frac{1}{2},\frac{1}{2})^2$ (see also \cite[\OOO Equations \EEE (2.6) and (2.14)]{KFZ:2023} for the notations). By definition of  $SBV^2_{\mathrm{isom}}\left(-
\frac{r}{2},\frac{r}{2}\right)$, we particularly have that 
\begin{equation}\label{eq:R_rod_1}
R^{\rm rod}(x_1) \defas  (\partial_1 y^{\rm rod}|d_2|d_3)(x_1)\in SO(3) \text{ for } \L^1\OOO\text{-a.e. } \EEE x_1 \in \left(-\frac{r}{2},\frac{r}{2}\right)\,.
\end{equation} 
\MMM Moreover, by \cite[Theorem~2.1]{KFZ:2023} we get \EEE
\begin{align}\label{coooonv2}
\chi_{S_{r,1,1}\setminus V^{\rm rod}_{h}}  \big( \partial_1 y^{\rm rod}_{h}, \tfrac{1}{h} \partial_2 y^{\rm rod}_{h},\tfrac{1}{h} \partial_3 y^{\rm rod}_{h} \big) \rightharpoonup  \chi_{S_{r,1,1}\setminus V^{\rm rod}} \bar{R}^{\rm rod}  \ \text{ weakly in } L^2(S_{r,1,1};\R^{3\times 3})\,, 
\end{align}
where $\bar{R}^{\rm rod}  (x) = R^{\rm rod}(x_1)$ for $x \in S_{r,1,1}$  (see also  \cite[\OOO Equations \EEE (2.9) and (2.15)]{KFZ:2023} for the notations). 
	  
By the lower semicontinuity result in \cite[Lemmata 5.2 and 5.3]{KFZ:2023} \MMM and \eqref{eq:rescaled_energy_estimate}, \EEE we then get 
\begin{align}\label{ineq:limitsurface}
c_*\eta \int_{(-\frac{r}{2},\frac{r}{2})} |(R^{\rm rod})^T \partial_1 R^{\rm rod}|^2 \, {\rm d}x_1 + \mathcal{H}^0\big(\partial I \cap (-\tfrac{r}{2},\tfrac{r}{2})\big) + 2\mathcal{H}^0\big((J_{y^{\rm rod}}\cup J_{R^{\rm rod}}) \setminus \partial I\big) <2-\frac{\delta}{2}
\end{align}
for	some $c_*>0$. Here, we observe that the quadratic form on the right-hand side of \cite[Equation~(5.3)]{KFZ:2023} is coercive, which can be seen by comparison to its form in the isotropic case addressed in \cite[Remark~3.5]{Mora}. 
	
Now, \eqref{ineq:limitsurface} implies 
\begin{equation}\label{eq:no_jump_y_rod}
(J_{y^{\rm rod}}\cup J_{R^{\rm rod}}) \setminus \partial I =\emptyset \ \text{ and }\ \mathcal{H}^0\left(\partial I\cap\OOO\left(-\tfrac{r}{2},\tfrac{r}{2}\right)\EEE\right)\leq 1\,.
\end{equation}
From \eqref{eq:estimate_on_good_stripes} and a \MMM change of variables \EEE we get $\OOO \L^1(I)\EEE \le \frac{r}{3}$, provided that  \EEE  $r>0$ is small enough such that $\sigma_r <\frac{\delta}{4}\frac{1}{3}$. This \OOO and \eqref{eq:no_jump_y_rod} imply \EEE that $I \subset \left(-\frac{r}{2},-\frac{r}{6}\right)$ or $I\subset\left(\frac{r}{6},\tfrac{r}{2}\right)$. Furthermore,  for $ Y^{\rm rod} \defas  (y^{\rm rod},R^{\rm rod})$, by \eqref{eq:estimate_on_good_stripes}, and  \eqref{coooonv1}--\eqref{coooonv2},  the lower semicontinuity of \OOO the $L^1$-norm \EEE under weak convergence, and a change of variables, we have 
\begin{align}\label{ineq:limitestimatey}
\int_{\left(-\frac{r}{6},\frac{r}{6}\right)} |Y^{\rm rod}- {Y}^\pm|\,\mathrm{d}x_1 \leq \frac{4}{\delta} r \sigma_r\,,
\end{align}
where we recall \eqref{eq:jump_function}. 
\end{step}
	
\begin{step}{(5): Proof of \eqref{eq:jump-equalities}}	
First, we show \eqref{eq:jump-equalities}(a), i.e.,   ${y}^+={y}^-$. Assume by contradiction that  $|{y}^+-{y}^-|>0$, and choose $r>0$ such that  
\begin{equation}\label{eq:choice_r_1}
\OOO r <  |{y}^+-{y}^-| \ \text{ and } \ \sigma_r<\frac{\delta}{48}|{y}^+- {y}^-|\,.\EEE
\end{equation} 
Let $x^-\in (-\frac{r}{6},0)$, $x^+\in (0,\frac{r}{6})$ be such that 

$$|y^{\rm rod}(x^+)- {y}^+|\leq \frac{6}{r} \int_{(0,\frac{r}{6})} |Y^{\rm rod}-{Y}^+|\,\mathrm{d}x_1\quad \text{and}\quad  |y^{\rm rod}(x^-)- {y}^-|\leq \frac{6}{r} \int_{(-\frac{r}{6},0)} |Y^{\rm rod}-{Y}^-|\,\mathrm{d}x_1\,.
$$
Then, by \eqref{ineq:limitestimatey} \OOO and \eqref{eq:choice_r_1} \EEE  we obtain 
\begin{align}\label{eq:cont_a_1}
|y^{\rm rod}(x^+)-y^{\rm rod}(x^-)| &\geq |{y}^+- {y}^-| - |y^{\rm rod}(x^+)- y^+|- |y^{\rm rod}(x^-)- {y}^-|\notag \\&\geq  |{y}^+-{y}^-| - \frac{6}{r} \int_{(-\frac{r}{6},\frac{r}{6})}|Y^{\rm rod}- {Y}^\pm|\,\mathrm{d}x_1 \geq \frac{1}{2}| {y}^+- {y}^-| \OOO >\EEE r/2\,, 
\end{align}
where the last step follows \OOO from the choice of $r$ in \eqref{eq:choice_r_1}. \EEE On the other hand, as $J_{y^{\rm rod}} \cap (-\frac{r}{6},\frac{r}{6})  = \emptyset$, \OOO cf.~\eqref{eq:no_jump_y_rod}, \EEE by the Fundamental Theorem of Calculus and \OOO \eqref{eq:R_rod_1}, \EEE  we obtain
\begin{align}\label{eq:cont_a_2}
\begin{split}	
|y^{\rm rod}(x^+)-y^{\rm rod}(x^-)| \leq \int_{(x^-,x^+)} | \partial_1y^{\rm rod}|\,\mathrm{d}x_1 \OOO =\EEE |x^+-x^-|\leq  r/3\,.
\end{split}
\end{align}
\OOO Now, \eqref{eq:cont_a_1} and \eqref{eq:cont_a_2} \EEE contradict each other, which shows that ${y}^+={y}^-$.

Now, \OOO in a similar manner, \EEE we show \eqref{eq:jump-equalities}(b), i.e., ${R}^+= {R}^-$.	Assume by contradiction that  $|{R}^+-{R}^-|>0$ and choose $r>0$ \OOO  small enough so that \EEE
\begin{equation}\label{eq:choice_r_small_2}
r^{1/4} <  |R^+-R^-|\ \text{ and } \sigma_r< \frac{\delta}{48} |\OOO R^+- R^-\EEE|\,.
\end{equation}
Let $\OOO x\EEE^- \in (-\frac{r}{6},0)$, $\OOO x\EEE^+ \in (0,\frac{r}{6})$ be such that 
\begin{align*}
| {R}^{\rm rod}(\OOO x\EEE^+)-{R}^+|\leq \frac{6}{r} \int_{(0,\frac{r}{6})} |Y^{\rm rod}- {Y}^+|\,\mathrm{d}x_1\quad \text{and}\quad  |{R}(\OOO x\EEE^-)-{R}^-|\leq \frac{6}{r} \int_{(-\frac{r}{6},0)} |Y^{\rm rod}-{Y}^-|\,\mathrm{d}x_1\,.
\end{align*}
This, along with \eqref{ineq:limitestimatey} \OOO and the choice of $r$ in \eqref{eq:choice_r_small_2}, \EEE shows 
\begin{align}\label{eq:cont_b_1}
\begin{split}	
|R^{\rm rod}(\OOO x\EEE^+)-R^{\rm rod}(\OOO x\EEE^-)| &\geq |{R}^+-{R}^-| - |R^{\rm rod}(\OOO x\EEE^+)-{R}^+|- |R^{\rm rod}(\OOO x\EEE^-)-{R}^-|\\&\geq  |{R}^+-{R}^-| - \frac{6}{r} \int_{(-\frac{r}{6},\frac{r}{6})}|Y^{\rm rod}-{Y}^\pm|\,\mathrm{d}x_1 \geq \frac{1}{2}|{R}^+-{R}^-| \OOO >\EEE \frac{1}{2}r^{1/4}\,. 
\end{split}
\end{align}
On the other hand, using \eqref{ineq:limitsurface} we  get that $\Vert \partial_1 R^{\rm rod} \Vert_{L^2((-\frac{r}{6},\frac{r}{6}))} \le \bar{C}$ for a constant \OOO$\bar C>0$ \EEE depending on $\eta$, but independent \OOO of \EEE $r$. \OOO Using this $L^2$-bound, together with the Fundamental Theorem of Calculus along with the fact that  $J_{R^{\rm rod}} \cap (-\frac{r}{6},\frac{r}{6})  = \emptyset$, \OOO cf.~again \eqref{eq:no_jump_y_rod}, \EEE  shows 
\begin{align}\label{eq:cont_b_2}
|{R}^{\rm rod}(\OOO x\EEE^+)-{R}^{\rm rod}(\OOO x\EEE^-)| \leq \bar{C} r^{1/2}\,.
\end{align}	
For $r>0$ sufficiently \OOO small (depending on $\eta>0$), \eqref{eq:cont_b_1} and \eqref{eq:cont_b_2} \EEE contradict each other, which shows $R^+ = R^-$. This concludes the proof. 
\end{step}
\end{proof}
\EEE

\section{Proof of Theorem \ref{thm:main_kirchhoff_gamma_conv}{\rm(ii)}}\label{sec: gamma_limsup}

In this last section we construct recovery sequences for admissible limits $(y,V)\in\mathcal{A}$, see \eqref{limiting_admissible_pairs}, for which we proceed in several steps. 

\medskip

\begin{step}{(1): Preparations}	The first step is devoted to the smoothening of the void set $V$ and covering most of the jump set $ J_{(y, \nabla' y)}$ by a suitable smooth void set. We fix an arbitrary error parameter $\eta \in (0,1)$, which we will send to zero only at the end of the proof by means of a diagonal argument. We  choose a smooth set $Z_\eta \subset \R^2$ such that 
\begin{align}\label{prep1}
\L^2((V \triangle Z_\eta) \cap S) \le \eta, \quad \quad   \mathcal{H}^1(\partial Z_\eta \cap S) \le \mathcal{H}^1(\partial^* V \cap S) +  \eta, \quad \quad  \mathcal{H}^1(\partial^* V \setminus \overline{Z_\eta}) \le \eta.   
\end{align}
Indeed, we first apply \cite[\OOO Theorem 3.1, Remark 3.2(i)\EEE]{set-reference} to find \OOO a relatively open set $Z'_\eta \in \mathcal{P}(S)$ such that $\partial Z'_\eta\cap S$ is a 1-dimensional $C^1$-submanifold, \EEE with 
\[\L^2(V \triangle Z'_\eta) \le \frac{\eta}{2}\,\quad \text{and}\quad  \mathcal{H}^1(\partial^* V \triangle \partial Z'_\eta) \le \frac{\eta}{2}\,.\] Then, for \eqref{prep1}, it suffices to choose a smooth set $Z_\eta \supset Z_\eta'$ with \[\L^2(Z_\eta \setminus Z_\eta') \le \frac{\eta}{2} \, \quad \text{and} \quad\mathcal{H}^1(\partial Z_\eta \cap S) \le \mathcal{H}^1(\partial Z_\eta' \cap S)  +  \frac{\eta}{2}\,.\]
Let
\begin{equation*}
J_\eta := (J_{(y, \nabla' y)} \cup \partial^* V) \setminus \overline{Z_\eta}\,.
\end{equation*}
By a standard Besicovitch covering argument (see, e.g.,~\cite[Equations (2.3), (2.6)]{Francfort-Larsen:2003} for details), we  can find a finite number of pairwise disjoint closed rectangles $(R_i)_{i=1}^N$, so that for every $i=1,\dots,N$, $R_i\subset\subset S \setminus Z_\eta$, with length $l_i$ and height $\eta l_i$, and
\begin{equation}\label{prep2}
\mathcal{H}^1\left(J_\eta \setminus \bigcup_{i=1}^N  R_i \right) \le \eta, \quad \quad    \sum_{i=1}^N l_i \le (1+ \eta)\mathcal{H}^1(J_\eta)\,.
\end{equation}
We can also pick pairwise disjoint smooth sets $T_i \subset \subset S \setminus Z_\eta$, $i=1,\dots,N$, so that  
\begin{equation}\label{eq:T_i_prop}
T_i \supset R_i\,,\quad \L^2(T_i) \le (1+\eta)\L^2(R_i)\,, \quad  \text{and} \quad  \mathcal{H}^1(\partial T_i) \le (1+\eta)\mathcal{H}^1(\partial R_i)\,.
\end{equation} 
We define $V_\eta := Z_\eta \cup \bigcup_{i=1}^N T_i$ and $y_\eta  \in SBV^{2,2}_{\mathrm{isom}}(S;\R^3)$ by
\begin{align}\label{prep3}
y_\eta\CCC(x')\EEE \defas \begin{cases}
y\CCC(x')\EEE & \CCC{\rm{ for }}\  x'\in\EEE S \setminus V_\eta\,,\\
\CCC x'\EEE & \CCC{\rm{ for }}\  x'\in\EEE V_\eta\,.
\end{cases} 
\end{align}
We also denote by $\tilde{y}_\eta \colon \Omega\to \R^3$ the corresponding deformation indicated by the identification \eqref{eq:y_ident}.  By the fact that the jump set of  $(y_\eta,\nabla'y_\eta)$ \EEE is contained in $J_\eta \cup \partial V_\eta$, \eqref{prep2} and \eqref{eq:T_i_prop} yield
\begin{align}\label{Jeta}
\mathcal{H}^1\big(J_{(y_\eta, \nabla' y_\eta)} \cap (S \setminus\overline{V_\eta})\big) \le \eta\,.
\end{align}
Moreover, \eqref{prep1}--\eqref{eq:T_i_prop} also imply that  
\begin{align}\label{prep4}
\mathcal{H}^1(\partial V_\eta \cap S) &\le \mathcal{H}^1(\partial Z_\eta \cap S)   + \sum_{i=1}^N \mathcal{H}^1(\partial T_i) \le \mathcal{H}^1(\partial^* V \cap S)  + \eta + \sum_{i=1}^N (1+\eta) (2+2\eta)l_i \notag \\
& \le \mathcal{H}^1(\partial^* V \cap S)  + 2\mathcal{H}^1(J_\eta)+ C\eta  \le  \mathcal{H}^1(\partial^* V \cap S)  + 2\mathcal{H}(J_\eta \setminus \partial^* V)+ C\eta \notag \\ & \le \mathcal{H}^1(\partial^* V \cap S)  + 2\mathcal{H}^1\big(J_{(y, \nabla' y)} \setminus \partial^*V \big)+ C\eta\,,
\end{align}
where $C>0$ depends only on $\mathcal{H}^1(J_\eta)$ and thus only on $(y,V)$.   
Using again \eqref{prep1}--\eqref{eq:T_i_prop},  we also find
\begin{align}\label{prep5}
	\begin{split}	
		\L^2\big((V \triangle V_\eta) \cap S\big)&\leq  \L^2((V \triangle Z_\eta)\cap S)+\sum_{i=1}^N\L^2(T_i) \EEE
		\\ 
		&\le \eta +  (1+\eta)\eta \sum_{i=1}^N l_i^2\le  \eta+ C\eta\mathcal{H}^1(J_\eta)\le C\eta\,,
	\end{split}
\end{align}
where we \MMM  employed \EEE that $l_i \le (1+\eta)\mathcal{H}^1(J_\eta) \le C$ for every $i=1,\dots,N$.  

Baring this construction in mind, our goal \OOO now \EEE is to construct sets $(\tilde{V}_h)_{h>0}$  and functions $(y_h)_{h>0}$ as follows (for the sake of not overburdening the notation in the following, \EEE subscripts $h$ will indicate that the objects depend on both $h$ and $\eta$): we need to find smooth sets  $(W^{\rm void}_{h})_{h>0} \subset \mathcal{A}_{\rm{reg}}(\R^2)\EEE$ with 
\begin{align}\label{eq: toshow1}
\lim_{h \to 0}\L^2(W^{\rm void}_h) = 0\,, \quad \quad \mathcal{H}^1(\partial W^{\rm void}_h) \le C\eta\,,
\end{align}
such that also the set $V_h \defas  {\rm int}(\overline{V_\eta \cup W^{\rm void}_h})$ is smooth, and $\tilde{V}_h \defas V_h \times (-\frac{1}{2},\frac{1}{2}) \subset \Omega$ satisfies
\begin{align}\label{eq: toshow1.5} 
{\rm (i)} \ \ &  \chi_{\tilde{V}_h} \to \chi_{\tilde{V}_\eta} \ \ \text{ in } L^1(\Omega)\, \OOO \quad \text{ as }h\to 0\EEE\notag \\
{\rm (ii)} \ \ & \kappa_h  \int_{\partial \tilde{V}_h\cap \Omega}|\bm{A}|^2\,\mathrm{d}\mathcal{H}^2 \to 0 \quad \text{as $h\to 0$}\,,
\end{align} 
where $\tilde{V}_\eta \defas V_\eta \times (-\frac{1}{2},\frac{1}{2})$.   Moreover, we need to find a sequence $(y_h)_{h>0}$ with $y_h \in W^{1,2}(\Omega \setminus \tilde{V}_h;\R^3)$  such that 
\begin{align}\label{eq: toshow2}
\begin{split}	
{\rm (i)} \ \ &  y_h \to \CCC\tilde y_\eta\EEE   \ \ \text{strongly in }  L^1(\Omega;\R^3) \OOO \ \text{ as }h\to 0\EEE\,,\\
{\rm (ii)} \ \ &  \nabla_{h}y_{h}\longrightarrow  \big(\nabla'  \CCC\tilde y_\eta\EEE,  \partial_1 \CCC\tilde y_\eta\EEE \wedge \partial_2  \CCC\tilde y_\eta\EEE\big)  \ \text{ strongly in } L^2(\Omega;\R^{3\times 3}) \OOO \ \text{ as }h\to 0\EEE\,,\\
{\rm (iii)} \ \ &  \limsup_{h \to 0} \left(h^{-2}\int_{\Omega\setminus \tilde{V}_h} W(\nabla_hy_h(x))\,\mathrm{d}x\right) \le \frac{1}{24}\int_{S \setminus V_\eta}\Q_2(\mathrm{II}_{y_\eta}(x'))\,\mathrm{d}x' + \eta\,,\\
{\rm (iv)} \ \ &  \Vert y_h \Vert_{L^\infty(\Omega)} \le M\,. 
\end{split}
\end{align}
Then,  recalling \eqref{initial_energy}, \eqref{rescaled_energy}, \eqref{eq: newenergy}, by \eqref{eq: toshow2}(iii), \eqref{eq: toshow1},\eqref{eq: toshow1.5},  and the fact that $\partial V_h\setminus \partial V_\eta\subset \partial W^{\rm{void}}_h$,   we find  
\[\limsup_{h\to 0} \E^{h}(y_{h},\tilde{V}_{h}) \le   \Big(\frac{1}{24}\int_{S \setminus {V}_\eta}\Q_2(\mathrm{II}_{y_\eta}(x'))\,\mathrm{d}x'+\H^1(\partial V_\eta\cap S)\Big)+C\eta\,\]
where we used that \eqref{eq: toshow1.5}(ii) is equivalent to 
\[h^{-1}\kappa_h  \int_{\partial (T_h(\tilde{V}_h))\cap \Omega_h}|\bm{A}|^2\,\mathrm{d}\mathcal{H}^2 \to 0 \quad \text{as $h\to 0$}\OOO\,,\EEE\]
since $\partial \tilde V_h\cap \Omega$ is cylindrical over $\partial V_h\cap S$.   Recalling \eqref{limiting_two_dimensional_energy}, \EEE we also observe that 
\[\limsup_{\eta \to 0}\Big(\frac{1}{24}\int_{S \setminus  {V}_\eta}\Q_2(\mathrm{II}_{y_\eta}(x'))\,\mathrm{d}x'+\H^1(\partial V_\eta\cap S)\Big)\EEE \le \mathcal{E}^{0}(y,V)\quad \text{as }\eta \to 0\,, \] 
by \eqref{prep3} and \eqref{prep4},\eqref{prep5}. As $\L^2(\lbrace y_\eta \neq y \rbrace) \le C\eta$ by \eqref{prep3} and \eqref{prep5}, using a diagonal argument in the theory of $\Gamma$-convergence, we obtain the desired recovery sequence. Here, we also use \eqref{eq: toshow1.5}(i) and \EEE \eqref{eq: toshow2}(i),(ii) to see that the convergence $\tau$ defined in Definition \ref{eq:tau_convergence} is satisfied, and we use \eqref{eq: toshow2}(iv) to guarantee that $y_h$ (extended by $ T_h(\mathrm{id})$ inside $\tilde{V}_h$) is admissible, cf.~\eqref{admissible_configurations_h_level}.

Summarizing, in the sequel it suffices to construct the sets $(W^{\rm void}_h)_{h>0}$ and the functions $(y_h)_{h>0}$ such that \eqref{eq: toshow1}--\eqref{eq: toshow2} hold.
\end{step}

\begin{step}{(2): Meshes and auxiliary regularization of the jump set}
In view of \eqref{Jeta}, the jump set 
$J_{(y_\eta, \nabla' y_\eta)}$  inside $S \setminus\overline{V_\eta}$ has small  $\H^1$-measure. To be in the position to repeat the arguments from the elastic case \cite[Section 6]{friesecke2002theorem}, it would be necessary to replace $y_\eta$ by a Sobolev function on $S \setminus\overline{V_\eta}$. A first idea could be to apply Corollary \ref{replace-cor} (or the corresponding Sobolev replacement in Theorem~\ref{th: kornSBDsmall}) on $S \setminus\overline{V_\eta}$. Yet, this approximation \MMM is \EEE  not compatible with the nonlinear rotationally invariant elastic energy, and would provide 
  inadequate   estimates.  Better approximations can be obtained by applying Corollary \ref{replace-cor} on meshes of scales smaller or equal to $h$. To this end, for $n \in \N_0$ and some $\Lambda\ge 1$, we partition $\R^2$ up to a set of negligible measure into the squares 
\begin{align}\label{mesh}
\mathcal{Q}_h^n \defas \big\{ Q_h^n(p) := p + \Lambda 2^{-n}h (-\tfrac{1}{2},\tfrac{1}{2})^2, \ p \in  \Lambda 2^{-n}h \Z^2\big\}\,.
\end{align}
The parameter $\Lambda$ will be chosen eventually in \eqref{Lambda-ref} (depending only on $\eta$) and  plays a role in an extension procedure, which  will become clear in  Steps 7--8 below. 
We write  
\begin{equation}\label{def:U_eta}
U_\eta :=  S \setminus \overline{V_\eta}  
\end{equation}
for notational convenience. Our strategy consists in defining a Whitney-type covering related to $U_\eta$ such that the jump set is covered by squares with small \OOO area\EEE, see Proposition \ref{prop:whit}  below for the precise statement and particularly \eqref{Wit2-5}.  For technical reasons in this construction, it is convenient to regularize the jump set of $y_\eta$. For notational convenience, we write
\begin{equation}\label{def:F_eta_b_eta}
(F_\eta,b_\eta) := (\nabla' y_\eta, \partial_1 y_\eta \wedge \partial_2 y_\eta)\,.  
\end{equation}  
By the density result   \cite[Theorem 3.1]{Cortesani-Toader:1999} we can find functions $(z_h)_{h>0} \subset SBV^2(U_\eta;\R^3)$  and $(F_h,b_h)_{h>0} \subset SBV^2(U_\eta;\R^{3\times 3})$ such that $J_{z_h}$ and $J_{(F_h,b_h)}$ consist of a finite number of segments, and 
\begin{align}\label{h3}
\begin{split}	
{\rm (i)} & \ \ \Vert z_h -y_\eta \Vert_{L^1(U_\eta)} + \Vert \nabla'  z_h-\nabla' y_\eta  \Vert_{L^2(U_\eta)}  \le h^2\,,\\
{\rm (ii)} & \ \ \Vert (F_h,b_h)-(F_\eta,b_\eta)\Vert_{L^2(U_\eta)} + \Vert   \nabla' (F_h,b_h)-\nabla'(F_\eta,b_\eta)  \Vert_{L^2(U_\eta)}  \le h^2\,,\\
{\rm (iii)} & \ \  \mathcal{H}^1(\Gamma_h) \le 2\eta, \quad \text{for $\Gamma_h \defas J_{z_h}\cup J_{(F_h,b_h)}$}\,,\\
{\rm (iv)} & \ \ \Vert z_h \Vert_{L^\infty(U_\eta)} \le  \Vert y_\eta \Vert_{L^\infty(U_\eta)}\,, \quad \Vert b_h \Vert_{L^\infty(U_\eta)} \le \Vert b_\eta \Vert_{L^\infty(U_\eta)}\,,
\end{split}
\end{align}
where for \OOO\eqref{h3}\EEE(iii) we used \eqref{Jeta} \OOO and \eqref{def:U_eta}\EEE. Note that the jump sets of $z_h$ and $(F_h,b_h)$ depend on $h$ and therefore this regularization does not appear to be helpful \OOO yet,  \EEE as it does not allow for uniform estimates. The only reason for this approximation is that it guarantees that the Whitney-type covering in Proposition~\ref{prop:whit} terminates at some finite scale $\mathcal{Q}_h^{K_h} $ for $K_h \in \N$ depending on $h$, see the discussion below \eqref{theclaim}. 
\end{step}

\begin{step}{(3):  Construction of  \MMM $W^{\rm void}_h$\EEE }
 Recalling \eqref{mesh}, we denote generic squares in $\mathcal{Q}_h^n$, $n \in \N_0$, by $q$.  By $\ell(q)$ we indicate the sidelength of the square, i.e., $\ell(q) := \Lambda 2^{-n}h$ for some $n \in \N_0$. Moreover, by  $q'$ and $q''$ we denote squares with the same center as $q$ and
\begin{align}\label{ell}
	\ell(q') = \frac{3}{2} \ell (q), \quad \quad \ell(q'') =   21   \ell (q)\,.
\end{align}
(The value $21$ is chosen for definiteness only and could also be any odd number sufficiently large.)
Since we consider squares of size $\sim h$, the jump set $\Gamma_h$, see \eqref{h3}(iii), is not necessarily small compared to $\ell(q)$ in each square $q$, which might  prevent the application of Corollary~\ref{replace-cor}. To this end, we define 
\begin{align}\label{5u}
\mathcal{Q}^U_h := \lbrace q \in  \mathcal{Q}^{0}_h\colon  q \subset U_\eta\rbrace\,,
\end{align} 
and given some universal $\theta \in (0,\frac{1}{16})$ small to be specified later   (see below \OOO\eqref{eq:isoper_omega_1_2}\EEE), we introduce the collection of \emph{bad squares} defined by  
\begin{equation}\label{eq: badd}
\mathcal{Q}_h^{\rm bad} :=  \big\{ q \in  \mathcal{Q}^U_h\colon    \mathcal{H}^1(\Gamma_h\cap q') \ge  \theta \Lambda h \rbrace\,.
\end{equation} 
We define the sets
\begin{equation}\label{Ubadest0}
U_h := \bigcup_{q \in \mathcal{Q}_h^{U}} \overline{q}\,, \quad \quad U_h^{\rm bad} := \bigcup_{q \in \mathcal{Q}_h^{\rm bad}} \overline{q''}\,. 
\end{equation}
From the definition of $\mathcal{Q}_h^{\rm bad}$  in \eqref{eq: badd} \EEE and \eqref{h3}(iii) as well as \eqref{ell}, we get 
\begin{align}\label{Ubadest}
\begin{split}
\L^2(U_h^{\rm bad})  & \le C(\Lambda h)^2\#\Q_h^{\rm{bad}}\EEE\le C\Lambda h \mathcal{H}^1(\Gamma_\eta) \le C\Lambda h\,, \\
 \mathcal{H}^1(\partial U_h^{\rm bad}) &  \le C(\Lambda h)\#\Q_h^{\rm{bad}}\EEE\le C\mathcal{H}^1(\Gamma_\eta) \le C\eta  
 \end{split}
\end{align}
for a constant $C>0$ depending only on $\theta$, where we used that each $x \in \R^2$ is only contained in a  (universally)  bounded number of squares $q'$, for $q \in \mathcal{Q}_h^0$. 

Then, \OOO recalling the notation in \eqref{neigh-def}, \EEE  we can choose \OOO a smooth \EEE $W^{\rm void}_h \supset \OOO  (U_h^{\rm bad})_{\Lambda h} \EEE$, satisfying   \eqref{eq: toshow1}. More precisely, since $U_h^{\rm bad}$ consists of at most $C\eta (\Lambda h)^{-1}$-many squares of sidelength $\Lambda h$, this can be done in such a way that 
\[ \|\bm{A}\|_{L^\infty(\partial W^{\rm void}_h)} \le C(\Lambda h)^{-1}\,,\EEE\]
cf.\ also \cite[Lemma 3.5]{KFZ:2022} for a similar construction. \EEE Therefore, by a careful choice of the sets $W^{\rm void}_h$  so that also $V_h \defas  {\rm int}(\overline{V_\eta \cup W^{\rm void}_h})$ is smooth,  in view of \eqref{eq: toshow1},  the set \MMM $\tilde{V}_h = V_h \times (-\frac{1}{2},\frac{1}{2})$ \EEE satisfies 
$$\int_{\partial \tilde{V}_h\cap \Omega}|\bm{A}|^2\,\mathrm{d}\mathcal{H}^2 \le \int_{(\partial V_\eta \cap S)\times (-\frac{1}{2},\frac{1}{2})}|\bm{A}|^2\,\mathrm{d}\mathcal{H}^2 +  C\mathcal{H}^1(\partial W^{\rm void}_h) h^{-2} \le C_\eta + C\eta h^{-2},$$
where $C_\eta>0$ depends on $V_\eta$ and thus on  $\eta$. By \eqref{rate_1_gamma_h} 
this shows \eqref{eq: toshow1.5}(ii). Clearly, \eqref{eq: toshow1} implies \eqref{eq: toshow1.5}(i).

For convenience, we define  $U_h^{\rm good} := U_h \setminus U_h^{\rm bad}$. For later purposes, we extend the set $U_h^{\rm good} $ by adding two extra layers around it. More precisely, we define
\begin{equation}\label{omega-ext0}
U_h^{\rm ext} :=   U_h^{\rm good}  \cup  \bigcup_{q \in \mathcal{Q}_h^{\rm ext}} \overline{q}\,,
\end{equation}
where 
\begin{equation}\label{cubes-ext0}
\Q_h^{\rm{ext}}:=\Big\lbrace q \in \mathcal{Q}_h^0 \colon q \notin U_h^{\rm good}\,,\ {\rm dist}_\infty(q, U_h^{\rm good}) \in \lbrace 0 , \Lambda h \rbrace  \Big\rbrace\,.
\end{equation} 
\MMM Using \EEE the definition of $U_h^{\rm bad}$ in \eqref{Ubadest0}, and the fact that \OOO $W^{\rm void}_h \supset (U_h^{\rm bad})_{\Lambda h}$, \EEE it is elementary to check that   
\begin{equation}\label{extensnion}
U_h^{\rm ext}  \cup  W^{\rm void}_h \supset   (U_\eta)_{(2-\sqrt{2})\Lambda h}\,.
\end{equation}
\end{step}

\begin{step}{(4): Construction of a Whitney-type covering}
We now construct a covering of $U_h^{\rm good}$. The main point is that Corollary \ref{replace-cor}  (for $d=2$) \EEE is \OOO then \EEE applicable in all squares and that the entire jump set   $\Gamma_h$, cf. \eqref{h3}(iii), \EEE can be covered by a set with small \OOO area\EEE, see \eqref{Wit2}--\eqref{Wit2-5}. In the covering we also ensure that the squares at the boundary of $U_h^{\rm good}$ are in $\mathcal{Q}_h^0$, which later will allow us to easily extend the covering to the set $U_h^{\rm  ext}$ defined in \eqref{omega-ext0}. For the next statement we recall the notations $q,q',q''$ introduced before \eqref{ell}, and refer to Figure~\ref{fig:cube_covering} for an illustration of the covering the next Proposition describes.

\begin{proposition}\label{prop:whit}
There exists a covering of Whitney-type $\mathcal{W}_h := ( q_i )_{i \in \OOO \I\EEE} \subset \bigcup^{K_h}_{n=0} \mathcal{Q}_h^n $ for some $K_h \in \N$ such that the squares $(q_i )_{i\in \OOO \I\EEE}$ are pairwise disjoint, and satisfy
\begin{align}\label{Wit1}
\begin{split}	
{\rm (i)} & \ \ \bigcup_{i \in \OOO \I\EEE} \overline{q_i} = \overline{ U_h^{\rm good}}\,,\\
{\rm (ii)} & \ \ q_i' \cap q_j' \neq \emptyset \quad \implies \quad   \frac{1}{2}\ell(q_i) \le \ell(q_j) \le 2 \ell (q_i)\,,\\[4pt]
{\rm (iii)} & \ \  \# \lbrace  j \in \OOO \I\EEE  \colon \,   q_i' \cap q_j' \neq \emptyset \rbrace \le 12  \quad \text{for all } i \in \OOO \I\EEE\,.
\end{split}
\end{align}
 Moreover,   defining
\begin{align}\label{the four sets}
\begin{split}	
\mathcal{W}^{\rm bdy}_h  & \defas  \lbrace q_i \colon   \partial q_i \cap \partial U_h^{\rm good} \neq \emptyset \rbrace\,,\\[3pt]
\mathcal{W}^{\rm jump}_h  & \defas  \lbrace q_i \notin   \mathcal{W}^{\rm bdy}_h \colon    \theta^2 \ell(q_i) \le   \mathcal{H}^1(\Gamma_h\cap q_i')  \rbrace\,,\\[3pt]
\mathcal{W}^{\rm empt}_h  & \defas  \lbrace q_i  \notin \mathcal{W}^{\rm bdy}_h  \colon   \mathcal{H}^1(\Gamma_h\cap q_i') = 0  \rbrace\,,\\[3pt]
\mathcal{W}^{\rm neigh}_h  & \defas  \mathcal{W}_h \setminus \big(   \mathcal{W}^{\rm bdy}_h \cup  \mathcal{W}^{\rm jump}_h \cup \mathcal{W}^{\rm empt}_h\big)\,,
\end{split}		
\end{align}
it holds that 
\begin{align}\label{Wit2}
\begin{split}	
{\rm (i)} & \ \  q_i \in \mathcal{Q}_h^0 \quad \text{for all } q_i \in \mathcal{W}^{\rm bdy}_h\,,\\[3pt]
{\rm (ii)} & \ \  \mathcal{H}^1(\Gamma_h\cap q_i') \le \theta \ell(q_i) \quad \text{for all } q_i \in \mathcal{W}^{\rm bdy}_h \cup \mathcal{W}^{\rm jump}_h \cup   \mathcal{W}^{\rm neigh}_h\,,\\[3pt]
{\rm (iii)} & \ \    q_i''  \subset  W_h^{\rm cov}\quad \text{for all } q_i \in   \mathcal{W}^{\rm neigh}_h,
\end{split}
\end{align}
where 
\begin{equation}\label{eq:W_h_cov}
W_h^{\rm cov} := \bigcup_{q \in   \mathcal{W}^{\rm bdy}_h \cup \mathcal{W}^{\rm jump}_h} \overline{q''}\,.
\end{equation} 
In particular, the set $ W_h^{\rm cov}$ satisfies
\begin{equation}\label{Wit2-5}
\L^2( W_h^{\rm cov}) \le C h\,,
\end{equation}
for $C>0$ only depending on $\Lambda$, $\theta$, \MMM and $\eta$. \EEE
\end{proposition}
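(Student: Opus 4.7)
The plan is to obtain $\mathcal{W}_h$ as the output of a standard top-down Whitney-type refinement of $U_h^{\rm good}$, initialized with the collection $\mathcal{F}_0 = \{q \in \mathcal{Q}_h^0 \colon q \subset U_h^{\rm good}\}$. At each stage I will split any non-boundary square $q$ of the current family into its four dyadic children at scale $n+1$ whenever either the jump-density criterion $\mathcal{H}^1(\Gamma_h \cap q') \le \theta\ell(q)$ fails, or whenever refining is needed to keep the Whitney scale-balance $\ell(q_i)/\ell(q_j) \in [1/2,2]$ on squares whose dilations intersect. Termination at some finite scale $K_h \in \mathbb{N}$ (depending on $h$) will be a consequence of \eqref{h3}(iii): since $\Gamma_h$ is a finite union of line segments, at a fine enough scale every non-boundary square either misses $\Gamma_h$ in its dilation or encounters it only through a short corner-clip, at which point no refinement is triggered.

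The properties \eqref{Wit1}(i)--(iii) will then be automatic from the combinatorics of a dyadic Whitney refinement: disjointness and covering are built into the construction, the scale-balance (ii) is precisely the second refinement rule, and the bounded overlap (iii) follows from (ii) by a standard packing argument. For \eqref{Wit2}(i), the key observation is that by the very construction of $U_h^{\rm good}$ in \eqref{Ubadest0} via \eqref{eq: badd}, any $q\in\mathcal{Q}_h^0$ contained in $U_h^{\rm good}$ already satisfies $\mathcal{H}^1(\Gamma_h \cap q') < \theta \Lambda h = \theta \ell(q)$; hence no boundary square is ever refined, and all boundary squares remain at scale $0$. Property \eqref{Wit2}(ii) for jump and boundary squares will be the termination criterion of the refinement, and for neighbor squares it will follow \emph{a fortiori} from $\theta^2<\theta$, since by definition of $\mathcal{W}^{\rm neigh}_h$ their jump content in $q_i'$ is strictly less than $\theta^2 \ell(q_i)$.

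The main obstacle will be \eqref{Wit2}(iii), i.e., proving $q_i'' \subset W_h^{\rm cov}$ for every $q_i \in \mathcal{W}^{\rm neigh}_h$. My plan is to argue by tracing back the refinement history: since $q_i$ has strictly positive (but tiny) jump content and is not a boundary square, its creation at scale $n(q_i)>0$ was triggered either because its parent exceeded the jump-density threshold -- producing a sibling in $\mathcal{W}^{\rm jump}_h$ -- or because a Whitney neighbor at an earlier scale was refined further, which, by induction on the refinement tree, again forces the presence of a jump square nearby. In either case, Whitney balance places a boundary or jump square $q_j$ at $\ell^\infty$-distance of order $\ell(q_i)$ from $q_i$, and the generous dilation factor $21$ defining $q''$ (to be contrasted with the factor $\tfrac{3}{2}$ for $q'$) is precisely what guarantees that the entire cell $q_i''$ is covered by the union of the $q_j''$ running over such neighboring boundary and jump squares.

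Finally, the area estimate \eqref{Wit2-5} will follow from two counting arguments. For boundary squares, all at scale $0$, their cardinality is bounded by $C\mathcal{H}^1(\partial U_h^{\rm good})/(\Lambda h)$, which is at most $C_\eta/h$ in view of \eqref{Ubadest} and the $C^2$-regularity of $V_\eta$; their total contribution to $\mathcal{L}^2(W_h^{\rm cov})$ is thus at most $C_\eta\Lambda h$. For jump squares, the defining inequality $\mathcal{H}^1(\Gamma_h \cap q_i') \ge \theta^2\ell(q_i)$ combined with the bounded overlap of the dilations $q_i'$ and \eqref{h3}(iii) will yield $\sum_{q_i \in \mathcal{W}^{\rm jump}_h} \ell(q_i) \le C\theta^{-2}\mathcal{H}^1(\Gamma_h) \le C\eta\theta^{-2}$, whence $\mathcal{L}^2\bigl(\bigcup_{q_i \in \mathcal{W}^{\rm jump}_h} q_i''\bigr) \le C\sum \ell(q_i)^2 \le C\Lambda h \sum \ell(q_i) \le Ch$, using $\ell(q_i) \le \Lambda h$ for every $q_i \in \mathcal{W}_h$. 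The constant will depend on $\Lambda$, $\theta$, and $\eta$ as claimed.
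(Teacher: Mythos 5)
There is a genuine gap in the refinement logic, and it breaks exactly the two points the construction exists to deliver: termination and \eqref{Wit2}(iii). Your stopping rule is ``refine $q$ while $\mathcal{H}^1(\Gamma_h\cap q')>\theta\ell(q)$''. But, as you yourself note when justifying \eqref{Wit2}(i), every scale-$0$ square $q\subset U_h^{\rm good}$ already satisfies $\mathcal{H}^1(\Gamma_h\cap q')<\theta\Lambda h=\theta\ell(q)$ by the definition of $\mathcal{Q}_h^{\rm bad}$ in \eqref{eq: badd}; hence your criterion never fires, no square is ever subdivided, and $\mathcal{W}_h$ collapses to $\{q\in\mathcal{Q}_h^0\colon q\subset U_h^{\rm good}\}$. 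For that trivial covering \eqref{Wit2}(iii) fails: the approximation \eqref{h3} gives no lower bound on the lengths of the segments of $\Gamma_h$, so $\Gamma_h$ may contain an isolated segment of length $\varepsilon\ll\theta^2\Lambda h$ deep inside $U_h^{\rm good}$; every square meeting it then lands in $\mathcal{W}^{\rm neigh}_h$ and there is no boundary or jump square anywhere near its $q''$. If instead you lower the threshold so that refinement does occur (say, refine while the density exceeds $\theta^2\ell(q)$, to match the definition of $\mathcal{W}^{\rm jump}_h$), the procedure never terminates: a segment crossing the dilation $q'$ through its center contributes length at least $\tfrac{3}{2}\ell(q)$ to $\mathcal{H}^1(\Gamma_h\cap q')$ at \emph{every} scale, so along the interior of any segment the relative density stays bounded below by a universal constant and the ``short corner-clip'' picture is false.

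The paper resolves this by inverting the roles of the two thresholds. Squares with $\mathcal{H}^1(\Gamma_h\cap q')\ge\theta^2\ell(q)$ are \emph{accepted} into the covering as jump squares and are never refined; only the low-density squares are subdivided further, and the bound $\mathcal{H}^1(\Gamma_h\cap q_i')\le\theta\ell(q_i)$ in \eqref{Wit2}(ii) is not a stopping rule but is inherited from the parent, whose density is below $\theta^2\ell(\mathrm{parent})=2\theta^2\ell(q_i)\le\theta\ell(q_i)$. Termination then uses the polygonal structure of $\Gamma_h$ from Step 2 --- the only place it is used --- to show that once the mesh size drops below the minimal segment length, every square not yet finalized has zero density in its dilation. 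Finally, neighbour squares are by construction created adjacent to the already finalized region, and the geometric-series bound on their distance to a jump square, property \eqref{eq: propteries in each step}(iv), is precisely what the dilation factor $21$ in $q''$ is calibrated against; without such a quantitative tracking, ``at distance of order $\ell(q_i)$'' is not enough to conclude \eqref{Wit2}(iii). Your packing arguments for \eqref{Wit1}(i)--(iii) and the counting argument for \eqref{Wit2-5} would go through for a correct refinement, but the construction itself has to be redone along these lines.
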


\begin{figure}[htp]
\includegraphics[width=1\linewidth]{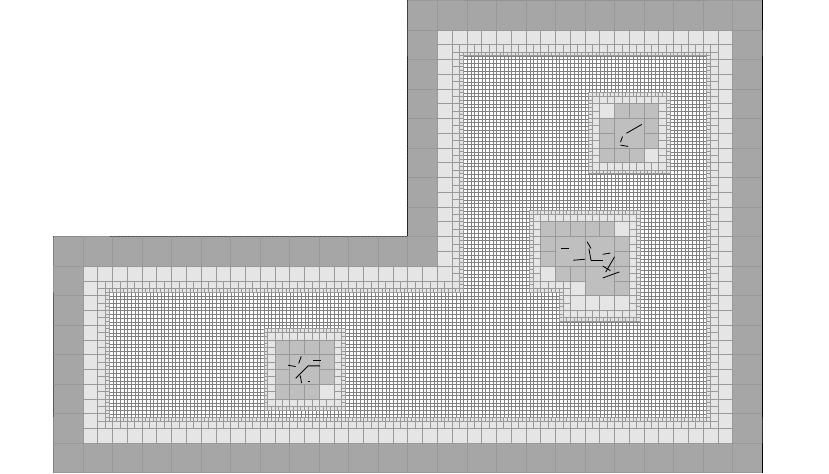}
\caption{An illustration of the dyadic construction giving the covering $\mathcal{W}_h$.    The  {jump set} $\Gamma_h$ is depicted by the black segments.\EEE}
\label{fig:cube_covering}
\end{figure}

\begin{proof}[Proof of Proposition \ref{prop:whit}]
First, all $q \in \mathcal{Q}_h^{0}$ with $q \subset U_h^{\rm good}$ and $\partial q \cap \partial U_h^{\rm good} \neq \emptyset$ are collected in $ \mathcal{W}^{\rm bdy}_h$. Note that  \eqref{Wit2}(i) holds and by \OOO the definition \EEE of $\mathcal{Q}_h^{\rm bad}$ in \eqref{eq: badd}, property \eqref{Wit2}(ii) is satisfied for each $ q\in  \mathcal{W}^{\rm bdy}_h$. 
	
\smallskip

\emph{Step 1: Induction.} The rest of the covering is constructed inductively. We first construct the collections   $ \mathcal{W}^{\rm jump}_h$ and  $\mathcal{W}^{\rm neigh}_h$.  All $q \in \mathcal{Q}_h^{0} $, $q \subset U_h^{\rm good}$, with   
\begin{equation}\label{def:Jump_0}
\mathcal{H}^1(\Gamma_h\cap q') \ge \theta^2 \ell(q) = \theta^2 \Lambda h\,,
\end{equation}
or $q \in \mathcal{W}^{\rm bdy}_h$ are collected in  $\mathcal{Y}_0^{\rm jump}$, and we define  $Y_0 :=  \bigcup_{q \in \mathcal{Y}_0^{\rm jump}} \overline{q} $. (Note that for later purposes it is convenient to also add the squares at the boundary to this set.) For definiteness, we also define $\mathcal{Y}_0^{\rm neigh} = \emptyset$.
	
Suppose that for some $k \in \N$ the collections  $\mathcal{Y}_j^{\rm jump},  \mathcal{Y}_j^{\rm neigh} \subset  \mathcal{Q}_h^{j}$, $0\le j \le k$, and 
\begin{equation}\label{def:Y_k}
Y_k :=   \bigcup_{j=0}^k\,\, \bigcup_{q \in \mathcal{Y}_{j}^{\rm jump} \cup \mathcal{Y}_{j}^{\rm neigh}} \overline{q}
\EEE
\end{equation}
have already been constructed such that 
\begin{align}\label{eq: propteries in each step}
\begin{split}	
{\rm (i)} & \ \  q_1' \cap q_2' \neq \emptyset \ \text{ for } q_1,q_2  \in \bigcup_{j=0}^k\mathcal{Y}_j^{\rm jump} \cup  \mathcal{Y}_j^{\rm neigh} \quad \implies \quad   \frac{1}{2}\ell(q_2) \le \ell(q_1) \le 2 \ell (q_2)\,,\\[3pt]
{\rm (ii)} & \ \   \theta^2 \ell(q) \le   \mathcal{H}^1(\Gamma_h\cap q')    \quad \text{for all $q \in \bigcup_{j=0}^k \mathcal{Y}_j^{\rm jump} \setminus \mathcal{W}^{\rm bdy}_h$}\,,\\[3pt]
{\rm (iii)} & \ \    \mathcal{H}^1(\Gamma_h\cap q')  \le  \theta \ell(q)  \quad \text{for all $q \in  \bigcup_{j=0}^k  \mathcal{Y}_j^{\rm jump} \cup \mathcal{Y}_j^{\rm neigh}$ and $q \in \mathcal{Y}_k^{\rm rest}$}\,,\\[3pt]
{\rm (iv)} & \ \  \text{for each $q \in \mathcal{Y}_k^{\mathrm{neigh}}\,\  \exists j\in \{0, \dots, k-1\}$, $\tilde q \in \mathcal{Y}^{\rm{jump}}_{j}$, so that $\OOO{\rm dist}_{\infty}\EEE(q,\tilde{q}) \le \OOO\Lambda h\EEE\sum^{k-1}_{l=j} 2^{-l}$}
\,,\\[3pt]
{\rm (v)} & \ \  \text{for all } q \in  \bigcup_{j=0}^k(\mathcal{Y}_j^{\rm jump} \cup  \mathcal{Y}_j^{\rm neigh})\EEE  \setminus \mathcal{W}^{\rm bdy}_h\colon \partial q \cap \partial Y_k \neq \emptyset \implies q \in \mathcal{Q}_h^k\,,
\end{split}
\end{align}
where we have set 
\begin{equation*}
\mathcal{Y}_k^{\rm rest} \defas \lbrace q \in \mathcal{Q}_h^k\colon q \subset U_h^{\rm good} \setminus Y_k\rbrace\,. \EEE
\end{equation*}
Clearly, by construction all the above properties are satisfied for $k=0$, see \eqref{eq: badd}, \eqref{def:Jump_0}, \MMM and \eqref{def:Y_k}. \EEE 
	
Before we proceed with the induction step, let us briefly explain the relevance of these properties. First, \eqref{eq: propteries in each step}(i) will be needed  for  \eqref{Wit1}(ii),(iii),  and \eqref{eq: propteries in each step}(ii),(iii)  are essential for \eqref{Wit2}(ii) and for the definition of $\mathcal{W}^{\rm jump}_h$, respectively. Next, \eqref{eq: propteries in each step}(iv) will lead to  \eqref{Wit2}(iii), and finally \eqref{eq: propteries in each step}(v) is needed to obtain  \eqref{eq: propteries in each step}(i) in the next iteration step.
	
We now come to the step $k+1$.  We define $\mathcal{Y}_{k+1}^{\rm jump}$ and  $\mathcal{Y}_{k+1}^{\rm neigh}$  as follows: recalling \eqref{def:Y_k}, \EEE we let 
\begin{equation}\label{Y_k+1_neigh}
 \mathcal{Y}_{k+1}^{\rm neigh}:=\{q \in \mathcal{Q}_h^{k+1}\colon q \subset U_h^{\rm good} \setminus Y_{k}\,,\quad \partial q \cap \partial Y_k \neq \emptyset\}\,.\EEE
\end{equation} 
Then, we let  
\begin{equation}\label{Y_k+1_jump}
 \mathcal{Y}_{k+1}^{\rm jump}:=\{q \in \mathcal{Q}_h^{k+1} \setminus \mathcal{Y}_{k+1}^{\rm neigh}\colon q \subset U_h^{\rm good} \setminus  Y_{k}\,, \quad \mathcal{H}^1(\Gamma_h\cap q')  \ge \theta^2 \ell(q)\}\,. \EEE
\end{equation}
We now confirm the properties \eqref{eq: propteries in each step} for the step $k+1$. First,  \eqref{eq: propteries in each step}(i),(v) for step $k$ guarantee \eqref{eq: propteries in each step}(i) for step $k+1$. Moreover, the construction   in \eqref{Y_k+1_neigh} and \eqref{Y_k+1_jump} \EEE  directly shows \eqref{eq: propteries in each step}(ii),(v) in step $k+1$.
	
Next, we address \eqref{eq: propteries in each step}(iii). To this end, fix $q \in \mathcal{Y}_{k+1}^{\rm jump} \cup  \mathcal{Y}_{k+1}^{\rm neigh} \cup \mathcal{Y}_{k+1}^{\rm rest}$, and choose the unique square $q_* \in \mathcal{Q}_h^k$ with $q \subset q_*$. Note that $\mathcal{H}^1(\Gamma_h\cap q_*') < \theta^2 \ell({q}_*)$, as otherwise $q_*$ would have been added to $\mathcal{Y}_{k}^{\rm jump}$ in the previous iteration step. This shows  
$$\mathcal{H}^1(\Gamma_h\cap q') \le  \mathcal{H}^1(\Gamma_h\cap q_*') < \theta^2 \ell({q}_*) = 2 \theta^2 \ell(q) \le \theta \ell(q)\,,$$
where we used that $\OOO0<\EEE\theta \le 1/2$, recalling the choice before \eqref{eq: badd}.

It remains to show \eqref{eq: propteries in each step}(iv) in step $k+1$. 
For $q\in \mathcal{Y}_{k+1}^{\rm neigh}$, in view of the definitions \eqref{Y_k+1_neigh} and \eqref{def:Y_k}, property \eqref{eq: propteries in each step}(v) in step $k$ yields that  there exists $ \hat{q}\in \mathcal{Y}_{k}^{\rm jump} \cup \mathcal{Y}_{k}^{\rm neigh}$ such that ${\rm dist}_{\OOO\infty\EEE}(q,\hat{q}) = 0$. If $\hat{q}\in \mathcal{Y}_{k}^{\rm jump}$, the statement follows for $\tilde q = \hat{q}$. Otherwise, if $\hat{q}\in \mathcal{Y}_{k}^{\rm neigh}$, by \eqref{eq: propteries in each step}(iv) in step $k$, we find $j\in \{0, \dots, k-1\}$  and $\tilde q \in \mathcal{Y}^{\rm{jump}}_{j}$ such that  ${\rm dist}_{\OOO\infty\EEE}(\hat{q},\tilde{q}) \le \OOO\Lambda h\EEE\sum^{k-1}_{l=j} 2^{-l}$. Since ${\rm dist}_{\OOO\infty\EEE}(q,\tilde{q}) \le {\rm dist}_{\OOO\infty\EEE}(\hat{q},\tilde{q}) + \OOO\Lambda h \EEE2^{-k}$, the statement also follows in this case. \EEE
	
\medskip
	
\emph{Step 2: Definition of the covering.} We now show that we can terminate the iteration at some step $K_h$. To this end, we claim that there exists $K_h \in \N$ such that for all $q \in \mathcal{Q}_h^{K_h}$, $q \subset U_h^{\rm good} \setminus Y_{K_h}$, we have 
\begin{align}\label{theclaim}
\mathcal{H}^1(\Gamma_h\cap q') =0\,.
\end{align}
In fact, choose $K_h$ large enough such that $\Lambda2^{-K_h}h$ is smaller than each of the length of the finite number of segments forming $\Gamma_h$,   cf.\ \eqref{h3}(iii). (This is the only point where we use the regularity   and polygonal structure of the  jump set.) Suppose by contradiction that there exists $q \in \mathcal{Q}_h^{K_h}$ \MMM such that \EEE $q \subset U_h^{\rm good} \setminus Y_{K_h}$ \MMM and  \EEE  $\mathcal{H}^1(\Gamma_h\cap q') \neq 0$. Choose $q_* \in \mathcal{Q}_h^{K_h-1}$ with $q_* \supset q$, and observe that $q_* \in \mathcal{Y}_{K_h-1}^{\rm rest}$. As $\Gamma_h$ consists of line segments whose length exceed $\Lambda 2^{-K_h}h$, it is elementary to verify that 
\[\mathcal{H}^1(\Gamma_h\cap q_*') \ge \frac{1}{4} \ell(q) = \frac{1}{8} \ell(q_*)\,,\] which contradicts \eqref{eq: propteries in each step}(iii) as $0< \theta < \frac{1}{16}$. 
	
\medskip
	
We now come to the definition of $\mathcal{W}_h$, cf.\ \eqref{the four sets}. First, $\mathcal{W}^{\rm bdy}_h$ has already been defined, and  we let  
\begin{equation}\label{def_W_h_jump}
 \mathcal{W}^{\rm jump}_h := \bigcup_{j=0}^{K_h} \mathcal{Y}_{j}^{\rm jump} \setminus \mathcal{W}^{\rm bdy}_h\,. \EEE
\end{equation}
Moreover, we introduce the auxiliary collection 
\begin{equation*}
\mathcal{W}^{\rm neigh, aux}_h := \bigcup_{j=0}^{K_h} \mathcal{Y}_{j}^{\rm neigh}\,.\EEE
\end{equation*} 
In view of  \eqref{eq: propteries in each step}(i),(v),  we can cover of $U^{\rm good}_h \setminus Y_{K_h}$ with cubes in $\mathcal{Q}_h^{K_h}$, denoted by $\mathcal{W}^{\rm empt, aux}_h$ such that $\mathcal{W}^{\rm bdy}_h\cup \mathcal{W}^{\rm jump}_h \cup \mathcal{W}^{\rm neigh, aux}_h \cup \mathcal{W}^{\rm empt, aux}_h$ satisfy \eqref{Wit1}, where we use that \eqref{Wit1}(iii) is a simple consequence of \eqref{Wit1}(ii). Eventually, we define 
\begin{equation}\label{theclaim2}
\mathcal{W}^{\rm empt}_h := \mathcal{W}_h^{\rm empt, aux} \cup \big\{ q \in \mathcal{W}^{\rm neigh, aux}_h \colon \mathcal{H}^1(\Gamma_h\cap q') = 0\big\}, \quad 
\mathcal{W}^{\rm neigh}_h  :=    \mathcal{W}_h^{\rm neigh, aux}\setminus \mathcal{W}^{\rm empt}_h.
\end{equation}
We note that the covering $\mathcal{W}_h$ consisting of the four  families  in \eqref{the four sets} still satisfies \eqref{Wit1}.  The collections satisfy the respective properties stated in  \eqref{the four sets} by \eqref{eq: propteries in each step}(ii) and \eqref{theclaim}--\eqref{theclaim2}.
	
It remains to show \eqref{Wit2}--\eqref{Wit2-5}. Property \eqref{Wit2}(i) holds by construction and  \eqref{Wit2}(ii)   follows from \eqref{eq: propteries in each step}(iii). Next,  \eqref{Wit2}(iii) follows by an elementary computation  using \eqref{def:Y_k},  \eqref{def_W_h_jump}--\eqref{theclaim2}, \EEE property \eqref{eq: propteries in each step}(iv), and the fact that $\ell(q'') =  21  \ell (q)$.    Finally, using the property of $\mathcal{W}^{\rm jump}_h$ in   \eqref{the four sets}, as well as  \eqref{eq: badd}--\eqref{Ubadest}
we compute
\begin{align*}
\L^2(W_h^{\rm cov}) &= \sum_{q \in  \mathcal{W}^{\rm bdy}_h \cup  \mathcal{W}^{\rm jump}_h} \L^2(q'')\le Ch  \sum_{q \in  \mathcal{W}^{\rm bdy}_h \cup  \mathcal{W}^{\rm jump}_h} \ell(q)  \\& 
\le    C\theta^{-2} h  \sum_{q \in  \mathcal{W}_h^{\rm  jump }} \mathcal{H}^1(\Gamma_h\cap q') + Ch\mathcal{H}^1(\partial U_h^{\rm good}) \le C\theta^{-2} h \mathcal{H}^1(\Gamma_h)  + Ch\le Ch\,,   
\end{align*}
for a constant  $C>0$  depending on $\Lambda$, $\theta$ \MMM and $\eta$, \EEE where in the penultimate step we have employed \eqref{Wit1}(iii),   the definition of $U_h^{\rm{good}}$ before \eqref{omega-ext0}, and the fact that (by the regularity of $U_\eta$)
\[\H^{1}(\partial U_h)\leq C\H^1(\partial U_\eta)\leq C_\eta\,.\] 
The last step follows from \eqref{h3}(iii). This concludes the proof of the proposition.
\end{proof}
\end{step}

\begin{step}{(5): Auxiliary estimates on the Whitney-type covering}
Before we can come to the definition of the deformations  $(y_h)_{h>0}$, we need some preliminary estimates on the squares of the Whitney-type covering $\mathcal{W}_h$ defined in Proposition \ref{prop:whit} which \OOO allow \EEE us to control the behavior on adjacent squares.   Recalling the notation in \eqref{def:U_eta}--\eqref{def:F_eta_b_eta}, for notational convenience, we define 
\begin{align}\label{eq: for SO2XXX}
a_h :=  h^{-2} \big(|\nabla'z_h-\nabla' y_\eta |+|(F_h,b_h)-(F_\eta,b_\eta)|\big) \in L^2(U_\eta)\,, 
\end{align}
which is bounded in $ L^2(U_\eta) $, uniformly in $h>0$, \EEE by \eqref{h3}.  For every $i\in \OOO\I\EEE$ we apply  Corollary~\ref{replace-cor} on $q_i'$ for the function $(F_h,b_h)$ to find \OOO a set \EEE of finite perimeter $\omega^1_i \subset q_i'$ and \OOO a matrix field \EEE $(F_i,b_i)  \in \R^{3 \times 3}$ such that 
\begin{align}\label{eq: forvh0}
\begin{split}	
{\rm (i)} & \ \ \mathcal{H}^1(\partial^* \omega^1_i) \le C \mathcal{H}^1(\Gamma_h \cap q_i')\,,\\
{\rm (ii)} & \ \  \Vert  (F_h,b_h)   - (F_i,b_i)      \Vert_{L^2(q_i' \setminus \omega^1_i)}  \le C\ell(q_i) \Vert \nabla' (F_h,b_h) \Vert_{L^2(q_i')}\,,  
\end{split}
\end{align}
where we recall the definition of $\Gamma_h$ in \eqref{h3}(iii).
(Clearly, the  objects $\omega_i^1, (F_i,b_i)$ \EEE also depend on $h$ which we do not include in the notation for simplicity.)    In view of \eqref{eq: for SO2XXX}, \eqref{eq: forvh0}, and the fact that $\nabla' y_\eta = F_\eta$, see  \eqref{def:F_eta_b_eta}, \EEE we also get 
\begin{equation}\label{eq: forvh0-}
	\Vert  \nabla' z_h   - F_i      \Vert_{L^2(q_i' \setminus \omega^1_i)}   \le C\ell(q_i) \Vert \nabla' (F_h,b_h) \Vert_{L^2(q_i')}  + Ch^2   \Vert a_h \Vert_{L^2(q_i')}\,. 
\end{equation}
Then, we define
\begin{align}\label{uidef}
u_i(x')  \defas   \chi_{q_i' \setminus \omega_i^1}(x') \big( z_h(x') -  F_i x'   \big) \quad \text{for } x' \in q_i'\,. 
\end{align}
We note that $u_i \in SBV^2(q_i';\R^3)$ with 
\begin{equation*}
	\Vert \nabla' u_i \Vert_{L^2(q_i')} = \Vert  \nabla' z_h   - F_i      \Vert_{L^2(q_i' \setminus \omega^1_i)} \le Ch\Vert \nabla' (F_h,b_h) \Vert_{L^2(q_i')} + Ch^2   \Vert a_h \Vert_{L^2(q_i')}\,,  
\end{equation*}
and
\begin{align}\label{N5}
\mathcal{H}^1(J_{u_i}) \le  C\mathcal{H}^1(\Gamma_h\cap q_i')\,,\EEE 
\,
\end{align}
where we also used that $\ell(q_i) \le Ch$ for a constant $C>0$ depending on $\Lambda$, together with \eqref{eq: forvh0}(i) and the fact that 
\begin{align*}
J_{u_i}\subset (J_{z_h}\cap q'_i)\cup\partial^*\omega^1_i\subset  (\Gamma_h\cap q'_i)\cup\partial^*\omega^1_i\,.
\end{align*}
\EEE By applying Corollary \ref{replace-cor}  on $q_i'$ once more, this time for the function $u_i$, we obtain another set of finite perimeter $\omega^2_i \subset q_i'$ with $\mathcal{H}^1(\partial^* \omega_i^2) \le C \mathcal{H}^1(\Gamma_h \cap q_i') $, see \eqref{N5}, and $c_i \in \R^3$  such that 
\begin{align}\label{eq: forvh0?}
\Vert u_i - c_i    \Vert_{L^{2}(q_i' \setminus \omega_i^2)}     \le C \ell(q_i)\Vert \nabla'  u_i   \Vert_{L^2(q_i')}  \le C\ell(q_i) \big( h\Vert \nabla' (F_h,b_h) \Vert_{L^2(q_i')} + \OOO h^2\EEE   \Vert a_h \Vert_{L^2(q_i')} \big)\,.  
\end{align}
For later reference, we note that the estimates \eqref{eq: forvh0}--\eqref{eq: forvh0-} and \eqref{eq: forvh0?} are true for $\omega_i^1 = \omega_i^2 = \emptyset$, whenever $q_i \in \mathcal{W}^{\rm empt}_h$, see \eqref{the four sets}.  Now, we define the affine function
\begin{align}\label{yiii}
y_i(x') \defas F_i x'  + c_i \OOO\,. \EEE 
\end{align}
Recalling \eqref{uidef}, we note that
\begin{align}\label{eq: forvh}
z_h(x') = u_i (x')+ y_i (x')-c_i    \quad   \text{for } x' \in q_i' \setminus \MMM \omega_i \,,  \EEE 
\end{align}
\MMM where we set $\omega_i:=\omega_i^1 \cup \omega_i^2$. \EEE  By the isoperimetric inequality and \eqref{Wit2}(ii) we get 
\begin{equation}\label{eq:isoper_omega_1_2}
\L^2(\omega_i)\OOO \le\EEE C\big(\mathcal{H}^1(\partial^* \omega_i^1 \cup \partial^*\omega_i^2)\big)^2 \le C(\mathcal{H}^1(\Gamma_h \cap q_i'))^2 \le C\theta^2 \ell(q_i)^2  \le \frac{1}{100}\ell(q_i)^2\,,
\end{equation}
for $\theta  \in (0,1/16) $ sufficiently small. Given $ i \in \OOO \I\EEE$, we define 
\begin{equation}\label{NNNN}
\mathcal{N}_i := \lbrace q_j \colon q_j' \cap q_i' \neq \emptyset\rbrace\,, \quad \quad N(q_i) := \bigcup_{q_j\in\mathcal{N}_i} q_j'\,.
\end{equation}
\OOO It is then easy to deduce that \EEE for each $ i\in \OOO \I\EEE$ 
\begin{align}\label{eq: good estimates}
\Vert   y_i - y_j      \Vert_{L^2(q_i')} &  + h \Vert  (F_i,b_i)   - (F_j,b_j)      \Vert_{L^2(q_i')}  \notag\\
&\le C \ell(q_i) \Big( h\Vert \nabla' (F_h,b_h) \Vert_{L^2(N(q_i))} + \OOO h^2 \EEE   \Vert a_h \Vert_{L^2(N(q_i))} \Big)\quad \text{for all $q_j \in \mathcal{N}_i$}\,.
\end{align}
Indeed, since $y_i$ \MMM is \EEE affine, by Lemma \ref{lemma: rigid motions}, \eqref{eq:isoper_omega_1_2}, \eqref{eq: forvh}, \eqref{eq: forvh0?},\eqref{Wit1}(ii), and $q_i'\cap q_j'\subset N(q_i)$ for all $q_j\in \mathcal{N}_i$, we obtain
\begin{align}\label{eq:y_i-y_j}
\Vert y_i-y_j \Vert_{L^2(q_i')}&\leq C\Vert y_i-y_j \Vert_{L^2((q_i'\cap q_j')\setminus (\omega_i\cup \omega_j))}\leq C\Vert y_i-z_h \Vert_{L^2(q_i'\setminus \omega_i)}+C\Vert z_h-y_j\Vert_{L^2(q_j'\setminus\omega_j)}\notag\\
&\leq C\Vert u_i-c_i \Vert_{L^2(q_i'\setminus \omega_i)}+C\Vert u_j-c_j\Vert_{L^2(q_j'\setminus\omega_j)}\notag\\
&\le C \ell(q_i) \Big( h\Vert \nabla' (F_h,b_h) \Vert_{L^2(N(q_i))} + \OOO h^2\EEE \Vert a_h \Vert_{L^2(N(q_i))} \Big)\,. 
\end{align}
Similarly, using that $(F_i,b_i),(F_j,b_j)$ are constant, \eqref{eq:isoper_omega_1_2}, and \eqref{eq: forvh0}(ii), we get
\begin{align}\label{eq:F_i-F_j}
h \Vert  (F_i,b_i)-(F_j,b_j)\Vert_{L^2(q_i')}&\leq \OOO C\EEE h\Vert (F_i,b_i)-(F_j,b_j)\Vert_{L^2((q_i'\cap q_j')\setminus (\omega_i\cup \omega_j))}\notag\\
&\leq \OOO C\EEE h\Vert (F_i,b_i)-(F_h,b_h)\Vert_{L^2(q_i'\setminus \omega_i)}+\OOO C\EEE h\Vert (F_h,b_h)-(F_j,b_j)\Vert_{L^2(q_j'\setminus \omega_j)}\notag\\
&\le Ch\ell(q_i)\Vert \nabla' (F_h,b_h) \Vert_{L^2(N(q_i))}\,. 
\end{align}
\MMM Combining \EEE \eqref{eq:y_i-y_j} and \eqref{eq:F_i-F_j}, the estimate \eqref{eq: good estimates} follows.

By a similar argument, using \eqref{eq:isoper_omega_1_2}, \OOO that $(F_\eta,b_\eta)\in SO(3)$ (see \eqref{def:F_eta_b_eta} and \eqref{prep3}),  \eqref{eq: forvh0}(ii), and \eqref{eq: for SO2XXX}, \EEE we get
\begin{align}\label{eq: for SO2}
	\Vert \dist( (F_i,b_i), SO(3)) \Vert_{L^2(q_i')}  & \leq C\Vert \dist( (F_i,b_i), SO(3)) \Vert_{L^2(q_i'\setminus \omega^1_i)}\le C\Vert (F_i,b_i)-(F_\eta,b_\eta)\Vert_{L^2(q_i'\setminus \omega^1_i)}
	\EEE \notag\\
	& \le  C\Vert (F_i,b_i) -(F_h,b_h)\Vert_{L^2(q_i' \setminus \omega_i^1)}+C\Vert (F_h,b_h) -(F_\eta,b_\eta)\Vert_{L^2(q_i' \setminus \omega_i^1)}  
	\notag \\ 
	&  \le  Ch \Vert \nabla' (F_h,b_h) \Vert_{L^2(q_i')} + Ch^2  \Vert a_h \Vert_{L^2(q_i')}\,.
\end{align}
For notational convenience,  recalling \eqref{h3},  we also introduce the functions 
\begin{align}\label{eq: bardef}
	\begin{cases}
		\bar{y}_i(x') := z_h(x')\,, \quad  \bar{b}_i := b_h(x') & \text{for }  x'\in q'_i\,,   \ q_i \in \mathcal{W}^{\rm empt}_h\,, \\
		\bar{y}_i(x') := y_i(x'), \quad  \ \ \bar{b}_i := b_i & \text{for }  x'\in q'_i\,,  \ q_i \notin \mathcal{W}^{\rm empt}_h\,. 
	\end{cases}
\end{align}
Note that for every $q_i \notin\mathcal{W}^{\rm empt}_h$, the functions $\bar{y}_i, \bar{b}_i$ \EEE are affine or constant, respectively.  The fact that $\omega_i^1 = \omega_i^2 = \emptyset$ for $q_i \in \mathcal{W}^{\rm empt}_h$, along with \eqref{eq: forvh0}--\eqref{eq: forvh0-}, \eqref{eq: forvh0?}--\eqref{eq: forvh}, and  \eqref{eq: good estimates} also implies, for all $q_j \in \mathcal{N}_i$, 
\begin{align}\label{eq: good estimates2}
\Vert   \bar{y}_i - \bar{y}_j     \Vert_{L^2(q_i')} + h\Vert  (\nabla' \bar{y}_i, \bar{b}_i)    - (\nabla' \bar{y}_j, \bar{b}_j)      \Vert_{L^2(q_i')}  \le C \ell(q_i)  \big(h \Vert \nabla' (F_h,b_h) \Vert_{L^2(N(q_i))} +  \OOO h^2 \EEE  \Vert a_h \Vert_{L^2(N(q_i))}\big)\,.
\end{align}
\smallskip
We are now ready to proceed to the next step, namely the definition of the approximating sequence $(y_h)_{h>0}$ \OOO satisfying \eqref{eq: toshow2}\EEE. Recall the definition of  $U_h^{\rm good}$ and $U_h^{\rm ext}$ before and in \eqref{omega-ext0}. Following the notation in \eqref{eq:V_ident}, we also define
\begin{align}\label{tildeuex}
	\tilde{U}_h^{\rm good} \defas U_h^{\rm good} \times (-\tfrac{1}{2}, \tfrac{1}{2})\,, \quad \quad   \tilde{U}_h^{\rm ext} \defas U_h^{\rm ext} \times (-\tfrac{1}{2} - \Lambda h, \tfrac{1}{2} + \Lambda h)\,,  
\end{align}
where for the second set it will turn out to be convenient to thicken slightly also in the $x_3$-direction. Our next steps (Steps 6 and 7) consist in defining $y_h$ first on  $\tilde{U}_h^{\rm good}$ and then on $\tilde{U}_h^{\rm ext}$. As during the extension we slightly change the function, we denote the functions in Step~6 by $\bar{y}_h$ and in Step~7 by $y_h$ for a better distinction. 
\end{step}

\begin{step}{(6): Definition of $\bar{y}_h$ on $\tilde{U}_h^{\rm good}$}
Recalling the covering $\mathcal{W}_h := ( q_i )_{i \in \OOO \I\EEE}$  provided by Proposition~\ref{prop:whit}, \EEE we choose $(\varphi_i)_{i \in \OOO \I\EEE} \subset C^\infty_{ c}(\R^2; [0,1])$ with 
\begin{equation}\label{unity}
{\rm (i)} \ \ \sum_{i\in \OOO \I\EEE} \varphi_i(x') = 1 \,\ \forall x' \in U_{h}^{\rm good}\,,	 \quad 
{\rm (ii)} \ \  {\rm supp} \, (\varphi_i) \subset q_i'\,, \quad    {\rm (iii)} \ \   \Vert \nabla \varphi_i \Vert_\infty \le C \ell(q_i)^{-1} \ \  \forall  i \in \OOO \I\EEE\,. 
\end{equation}
As the proof of the existence of such a partition is very similar to the construction of a partition
of unity for Whitney coverings (see \cite[Chapter VI.1]{Stein:1970}), we omit it here. We also refer to \cite[Proof of Theorem 4.6]{Friedrich:15-4} for  similar arguments. 

Fix $d \in C^1_0(S;\R^3)$ to be specified later, see  the choice before \eqref{the bulk part}  below. Recalling \eqref{eq: bardef}, we define    $\bar{y}_h \in W^{1,2}(\tilde{U}_h^{\rm good};\R^3) $ by
\begin{equation}\label{bardef}
\bar{y}_h(x)  :=   \sum_{ i\in \OOO \I\EEE}  \varphi_i(x')\big(  \bar{y}_{i}(x')+ hx_3 \bar{b}_{i}(x')  \big)   + h^2 \frac{x_3^2}{2} d(x')\,. 
\end{equation}
We further introduce
\begin{equation}\label{eq:W_bjn} W^{\rm bjn}_h := \bigcup_{q \in \mathcal{W}^{\rm bdy}_h \cup \mathcal{W}^{\rm jump}_h \cup \mathcal{W}^{\rm neigh}_h } q'\,,\quad  \quad \ W^{\rm empt}_h := U_h^{\rm good} \setminus W^{\rm bjn}_h\,. \EEE
\end{equation}
We again use the \OOO notation \EEE $\tilde{W}^{\rm bjn}_h$ and $\tilde{W}^{\rm empt}_h$ for the  corresponding cylindrical sets in $\R^3$,  see also \eqref{tildeuex}. \EEE  We also note that $W^{\rm bjn}_h \subset W^{\rm cov}_h$ by \eqref{Wit2}(iii). Observe that the definition of $U_h^{\rm good}$ in Step~3,  \MMM the definition of $\mathcal{Q}^U_h$ in \eqref{5u}, \EEE and \eqref{Ubadest} shows that $\L^2( U_\eta\EEE
\setminus U_h^{\rm good}) \to 0$ as $h \to 0$. By the definition of $W^{\rm{empt}}_h$ in \eqref{eq:W_bjn}, together with \eqref{Wit2-5} and the previous observation,  we find
\begin{equation}\label{anothersetconv}
	 \L^2(U_\eta \setminus {W}^{\rm empt}_h) \to 0 \ \text{ as } h\to 0\,.\EEE
\end{equation}
In view of \eqref{unity} and \eqref{bardef}, on  $\tilde{W}_h^{\rm empt}$,   the definition of the deformation is similar to the ansatz for the recovery sequence in \cite[Equation (6.24)\EEE]{friesecke2002theorem}, namely, using also \EEE\eqref{eq: bardef}, we have
\begin{align}\label{eq: yh}
	\bar{y}_h(x) =  z_h(x')+ hx_3 b_h(x')     + h^2 \frac{x_3^2}{2} d(x') \ \ \text{for } x'\in \tilde{W}_h^{\rm empt}\,,
\end{align}
with 
\begin{equation}\label{eq: yh2}
	\nabla_h \bar{y}_h (x)   =  \big(R_h(x')  +  hx_3 (\nabla' b_h (x'), d(x'))\big)   + h^2 \frac{x_3^2}{2}( \nabla' d(x'),0)\,, 
\end{equation}
where $R_h(x')\defas  (\nabla' z_h(x'),  b_h(x'))$. Here,  we can repeat the estimate from the purely elastic case \cite[Proof of Theorem 6.1(ii)]{friesecke2002theorem}, which we perform here for the sake of completeness.  In view of \eqref{eq: yh2}, we obtain 
\begin{equation}\label{eq:A_h_def}
	R_\eta^T  \nabla_h \bar{y}_h    - {\rm Id}=   hx_3 R_\eta^T \big(\nabla' b_h, d\big) + h^2 \frac{x_3^2}{2} R^T_\eta  ( \nabla' d,0) + R^T_\eta R_h- {\rm Id}  =: A_h\EEE\,,
\end{equation}
where we set $R_\eta :=( F_\eta, b_\eta) \in SO(3)$\OOO, cf.~\eqref{def:F_eta_b_eta}. \EEE
Define  $\chi_h := \chi_{\tilde{W}^{\rm empt}_h}$ for brevity. In view of \eqref{h3}(i),(ii) and \eqref{anothersetconv}, we get that,  up to subsequences in $h$ (not relabeled), 
\begin{equation}\label{eq: recovery_approx}
\frac{1}{h}\chi_h(R^T_\eta R_h- {\rm Id}) \to 0\, \ \ \text{and } \ \frac{1}{h}\chi_hA_h  \to x_3 R_\eta^T (\nabla' b_\eta, d) \text{ pointwise } \L^3\text{-a.e. on }\Omega \setminus \tilde{V}_\eta\,.
\end{equation} 
Moreover,  \eqref{eq: nonlinear energy}(i), \eqref{eq:A_h_def}, \OOO the growth from above on $W$ in \eqref{eq: nonlinear energy}(v), and \EEE \eqref{h3}(ii) yield \[h^{-2}\chi_h W(\nabla_h \bar{y}_h)   \le Ch^{-2}|A_h|^2 \le  C(|\nabla'  b_{\eta}|^2 + |d|^2 + |\nabla' d|^2) + {\rm O}(h) \ \text{ on } \tilde{W}_h^{\rm empt}\,,\] 
where $ {\rm O}(h)$ has to be understood in the $L^1$-sense. Therefore, by the Dominated Convergence Theorem, a Taylor expansion of $W$, see  \eqref{eq: nonlinear energy} and  \eqref{def:Q_3}, \MMM \eqref{anothersetconv}, \EEE   \eqref{eq:A_h_def}, and \eqref{eq: recovery_approx},  we obtain 
\begin{align}\label{empt-est}
\lim_{h \to 0} h^{-2} \int_{\tilde{W}_h^{\rm empt}}    W(\nabla_h \bar{y}_h(x)) \, {\rm d}x = \frac{1}{2} \int_{\Omega \setminus \tilde{V}_\eta}  x_3^2 \Q_3(R_\eta^T  (\nabla' b_\eta , d)) \, {\rm d} x\,. 
\end{align}
Now, recalling \eqref{def:Q_2} and \eqref{def:second_FF}, we can choose a function $d\in C^1_0(S;\R^3)$ such that  
\begin{align}\label{the bulk part} 
\lim_{h \to 0} h^{-2} \int_{ \tilde{W}_h^{\rm empt}}     W(\nabla_h \bar{y}_h (x)) \, {\rm d}x  \le \frac{1}{24}\int_{S \setminus   {V}_\eta}\Q_2(\mathrm{II}_{y_\eta}(x'))\,\mathrm{d}x' + \eta\,.  
\end{align}

We now come to the integral over $\tilde{W}^{\rm bjn}_h$. On this set, the derivative of $\bar{y}_h$ reads as 
\begin{align}\label{eq: derivyh} 
\nabla_h \bar{y}_h (x) &  = \sum_{ j\in \OOO\I\EEE}    \varphi_j(x') \big( (\nabla' \bar{y}_j(x'),  \bar{b}_j(x'))   +  hx_3 (\nabla' \bar{b}_j(x'), d(x')) \big)   + h^2 \frac{x_3^2}{2}( \nabla' d(x'),0) \notag \\ 
& \ \ \ +   \sum_{ j\in \OOO\I\EEE}    \big(  \bar{y}_j(x')+ hx_3 \bar{b}_j(x')  \big) \otimes  (\nabla'\varphi_j(x'),0)\,. 
\end{align}
Fix $q_i \in \mathcal{W}^{\rm bdy}_h \cup \mathcal{W}^{\rm jump}_h \cup \mathcal{W}^{\rm neigh}_h$, and set $\OOO\tilde{q}'_i:=\EEE q_i'\times (-\frac{1}{2},\frac{1}{2})$.  Since   $\nabla'\big( \sum_{ j\in \OOO\I\EEE} \varphi_{j}\big) = 0 $, see  \eqref{unity}(i), \EEE we get  
\begin{align*}
\Big\Vert  \sum_{ j\in \OOO\I\EEE}   \big(  \bar{y}_j+ hx_3 \bar{b}_j  \big) \otimes    (\nabla'\varphi_j,0)    \Big\Vert_{L^2(\tilde{q}'_i)} & =  \Big\Vert  \sum_{  j\in \OOO\I\EEE}    \big(  (\bar{y}_j - \bar{y}_i)+ hx_3 (\bar{b}_j - \bar{b}_i) \big)  \otimes (\nabla'\varphi_j,0)   \Big\Vert_{L^2(\tilde{q}_i')}\,.
\end{align*}
By  \eqref{eq: good estimates2}  and  the fact that  $\Vert \nabla' \varphi_j \Vert_\infty \le C\ell(q_i)^{-1}$ for all $j \in \OOO\I\EEE$ with ${\rm supp}(\varphi_j) \cap q_i' \neq \emptyset$, see \eqref{Wit1}(ii) and \eqref{unity}(ii),(iii), we thus find
\begin{align*}
\Big\Vert  \sum_{ j\in \OOO\I\EEE}   \big(  \bar{y}_j+ hx_3 \bar{b}_j  \big) \otimes    (\nabla'\varphi_j,0)    \Big\Vert_{L^2(\tilde{q}'_i)}  \le   Ch \Vert \nabla' (F_h,b_h) \Vert_{L^2(N(q_i))} +  Ch^2   \Vert a_h \Vert_{L^2(N(q_i))}\,.
\end{align*}
This along with \OOO \eqref{eq: derivyh}, \EEE  \eqref{eq: bardef}, \eqref{eq: good estimates2}, the fact that $\nabla' \bar{b}_j \in \lbrace 0, \nabla' {b}_h\rbrace$, and $\ell(q_i) \le Ch$,  shows
\begin{align*}
	\Vert \nabla_h \bar{y}_h - (\nabla' \bar{y}_i,\bar{b}_i)   \Vert_{L^2(\tilde{q}_i')} \le  Ch \Vert \nabla' (F_h,b_h) \Vert_{L^2(N(q_i))}  +  Ch^2   \Vert a_h \Vert_{L^2(N(q_i))}  + Ch \Vert d \Vert_{W^{1,2}(q_i')}\,. 
\end{align*}
Then,  by  \MMM \eqref{eq: for SO2XXX}, \EEE  \eqref{yiii},   and \eqref{eq: for SO2}, we get 
\begin{align*}
	\Vert {\rm dist}(\nabla_h \bar{y}_h, SO(3))   \Vert_{L^2(\tilde{q}_i')} \le  Ch \Vert \nabla' (F_h,b_h) \Vert_{L^2(N(q_i))}  + C h^2  \Vert a_h \Vert_{L^2(N(q_i))}  +  Ch \Vert d \Vert_{W^{1,2}(q_i')}\,. 
\end{align*}
Summing over all $q_i \in  \mathcal{W}^{\rm bdy}_h \cup \mathcal{W}^{\rm jump}_h \cup \mathcal{W}^{\rm neigh}_h$ and using  \eqref{eq:W_bjn}, \EEE \eqref{Wit1}(iii), and \eqref{Wit2}(iii), we deduce
\begin{align}\label{finalllest}
\Vert {\rm dist}(\nabla_h \bar{y}_h, SO(3))   \Vert_{L^2(\tilde{W}^{\rm bjn}_h)}^2 \le Ch^2 \Vert \nabla' (F_h,b_h) \Vert^2_{L^2(W_h^{\rm cov})}  +  Ch^2 \Vert d \Vert^2_{W^{1,2}(W_h^{\rm cov})} +  Ch^4  \Vert a_h \Vert^2_{L^2(W_h^{\rm cov})}\OOO\,,\EEE    
\end{align}
where we also used $N(q_i) \subset q_i''$, see \eqref{NNNN}. Now, by \eqref{h3}(ii), \eqref{Wit2-5}, and \eqref{eq: for SO2XXX} we find that 
\[ h^{-2} \Vert {\rm dist}(\nabla_h \bar{y}_h, SO(3))   \Vert_{L^2(\tilde{W}^{\rm bjn}_h)}^2  \to 0 \ \text{ as } h\to 0\,.\]
This, along with  \eqref{eq:W_bjn},  \eqref{the bulk part}, and \eqref{eq: nonlinear energy}(v)  shows
\begin{align*}
	\lim_{h \to 0} h^{-2} \int_{\tilde{U}_h^{\rm good}}    W(\nabla_h \bar{y}_{ h}(x)) \, {\rm d}x  \le \frac{1}{24}\int_{S \setminus V_\eta}\Q_2(\mathrm{II}_{y_\eta}(x'))\,\mathrm{d}x' + \eta\,.  
\end{align*}
\end{step}

\begin{step}{(7): Definition of $y_h$ on $\tilde{U}_h^{\rm ext}$}
We now come to the definition of  $y_h$ on the extended set $\tilde{U}_h^{\rm ext}$ defined  \MMM in \eqref{tildeuex}. \EEE  Recall that $\mathcal{W}^{\rm bdy}_h \subset \mathcal{Q}_h^0$, see \eqref{Wit2}(i). We can thus extend the Whitney-type covering $\mathcal{W}_h$ given by Proposition \ref{prop:whit}  to a new covering, denoted by $\mathcal{W}^{\rm ext}_h$, by adding all squares of $\mathcal{Q}_h^{\rm ext}$, see \eqref{cubes-ext0}. On each of these squares $q_i \in \mathcal{W}^{\rm ext}_h \setminus \mathcal{W}_h$, we pick  one of the \EEE squares $q_j \in \mathcal{W}^{\rm bdy}_h$ \EEE which is closest to $q_i$ 
and define $F_i := F_j$ and $b_i := b_j$, where $F_j$ and $b_j$ are given in \eqref{eq: forvh0}. Accordingly, we also define  the affine function $y_i$, see \eqref{yiii}, and  as in \eqref {eq: bardef} we introduce the notation $\bar{y}_i(x') := y_i(x')$ and $\bar{b}_i := b_i$. Exploiting the fact that for neighboring squares the difference of these objects can be controlled, see \eqref{eq: good estimates} and its justification \MMM in \eqref{eq:y_i-y_j} and \eqref{eq:F_i-F_j}, \EEE it is elementary to check that \eqref{eq: good estimates2} still holds for the extended covering.  

We let $(\varphi_i)_{ i\in \OOO \I'\EEE}$ be a partition of unity related to $\mathcal{W}^{\rm ext}_h$ satisfying \eqref{unity}. Then, choosing  a field \EEE $d \in C^1_0(S;\R^3)$ as in Step 6, we define    ${y}_h \in W^{1,2}(\tilde{U}_h^{\rm ext};\R^3   ) $ by
\begin{align}\label{yh12}
{y}_h(x)  :=   \sum_{ i\in \OOO \I'\EEE}    \varphi_{ i\EEE}\big(  \bar{y}_{ i\EEE}(x')+ hx_3 \bar{b}_{ i\EEE}(x')  \big)   + h^2 \frac{x_3^2}{2} d(x')\,. 
\end{align}
The estimate \eqref{empt-est} holds still true for $y_h$ in place of $\bar{y}_h$ and ${W}_h^{\rm empt} \times (-\tfrac{1}{2} - \Lambda h, \tfrac{1}{2} + \Lambda h)$ in place of $\tilde{W}_h^{\rm empt}$. In particular, the limit is not affected by the thickening in the $x_3$-direction.  In a similar fashion, arguing as in Step 6, by replacing the estimate on $\tilde{W}^{\rm bjn}_h$ in \eqref{finalllest}  accordingly by a calculation on $W_h^* \defas ( {W}^{\rm bjn}_h \cup (U_h^{\rm ext} \setminus U_h^{\rm good}))  \times (-\tfrac{1}{2} - \Lambda h, \tfrac{1}{2} + \Lambda h)$, we get
\begin{equation*}
h^{-2}\Vert {\rm dist}(\nabla_h  {y}_h, SO(3))   \Vert_{L^2(W_h^*)}^2 \le C \Vert \nabla' (F_h,b_h) \Vert^2_{L^2(W_h^{\rm cov, *})}  +  C \Vert d \Vert^2_{W^{1,2}(W_h^{\rm cov, *})} +  Ch^2  \Vert a_h \Vert^2_{L^2(W_h^{\rm cov, *})} \,,
\end{equation*}
where we set $W_h^{\rm cov, *} \defas W_h^{\rm cov} \cup (U_h^{\rm ext} \setminus U_h^{\rm good})$. Hence, repeating verbatim the argument after \eqref{finalllest}, we obtain
\begin{equation}\label{finalllestNEW}
h^{-2}\Vert {\rm dist}(\nabla_h  {y}_h, SO(3))   \Vert_{L^2(W_h^*)}^2\to 0 \ \text{as } h\to \infty\,.
\end{equation}
\OOO A combination of these estimates as before, \EEE shows  
\begin{align}\label{the bulk part-neu} 
\lim_{h \to 0} h^{-2} \int_{\tilde{U}_h^{\rm ext}}    W(\nabla_hy_{ h}(x)) \, {\rm d}x  \le \frac{1}{24}\int_{S \setminus V_\eta}\Q_2(\mathrm{II}_{y_\eta}(x'))\,\mathrm{d}x' + \eta\,.  
\end{align}
\end{step}
 
\begin{step}{(8):  Conclusion}
We define \MMM $y_h\colon \Omega\to \R^3$ by $y_h = T_h(\mathrm{id})$ on $\tilde{V}_h$  \EEE (recall its definition before \eqref{eq: toshow1.5}) and otherwise as the restriction of the function in \eqref{yh12} to $\Omega \setminus \tilde{V}_h$. Here, we use \eqref{extensnion}, recall also \eqref{def:U_eta}, to ensure that $\Omega \setminus \tilde{V}_h \subset \tilde{U}_h^{\rm ext}$. We first treat the case that $\Vert y\Vert_{L^\infty(S)} < M$.  In view of \eqref{prep3}, \eqref{h3}(iv), and \eqref{yh12}, we find $\Vert y_h \Vert_{L^\infty(\Omega)} \le M$ for $h$ sufficiently small, i.e., \eqref{eq: toshow2}(iv) holds. Here, recalling the estimates in Step 5 it is indeed  not restrictive to assume that $\Vert \bar{y}_i \Vert_\infty \le \Vert z_h \Vert_\infty$ and $\Vert \bar{b}_i \Vert_\infty \le \Vert b_h \Vert_\infty$. From the representation of $y_h$ and $\nabla_h y_h$ on  $\tilde{W}_h^{\rm empt}$, see  \eqref{eq: yh}--\eqref{eq: yh2}, by  using \eqref{eq: toshow1.5}(i), \eqref{h3}, and the fact that  $\L^3((\Omega \setminus \tilde{V}_h)\setminus \tilde{W}_h^{\rm empt}) \to 0$, see \eqref{anothersetconv}, we find 
\[\chi_{\tilde{W}_h^{\rm empt}}y_h \to \CCC\tilde y_\eta\EEE \ \text{in } L^1(\Omega;\R^3)\,,\ \text{ and }\ \chi_{\tilde{W}_h^{\rm empt}}\nabla_h y_h \to  (\nabla'  \CCC\tilde y_\eta\EEE,  \partial_1 \CCC\tilde y_\eta\EEE \wedge \partial_2  \CCC\tilde y_\eta\EEE) \text{ strongly  in } L^2(\Omega;\R^{3\times 3})\,.
\]  This together with $\Vert y_h \Vert_{L^\infty(\Omega)} \le M$ and \eqref{finalllestNEW}  shows \eqref{eq: toshow2}(i),(ii). Eventually, \eqref{eq: toshow2}(iii) follows from \eqref{the bulk part-neu}.  

We close the proof by explaining the necessary adaptations in the case that $\Vert y\Vert_{L^\infty(S)} = M$. Note that the sequence $(y_h)_{h>0}$ as defined in Step 7 might not satisfy $\|y_h\|_{L^\infty}\leq M$ in this case.   Using  \eqref{extensnion} and recalling   the definition of $ \tilde{U}_h^{\rm ext}$ in \eqref{tildeuex} we find  
$$(\Omega \setminus \tilde{V}_h)_{(2-\sqrt{2})\Lambda h} \subset \tilde{U}_h^{\rm ext}. $$
Then, choosing a universal $C >0$ \OOO large enough \EEE such that $\Omega \subset B_C(0) \subset \R^3$,  and defining  
\begin{equation}\label{def:sigma_h}
	 \sigma_h \defas 1+ \frac{1}{2C}\Lambda h\,,\EEE
\end{equation}
it is elementary to check that 
\begin{align}\label{dila}
	\sigma_h x  \in   \tilde{U}_h^{\rm ext}  \quad \text{for all $x \in \Omega \setminus \tilde{V}_h$}\,.  
\end{align}
We define the sequence deformations \OOO $(\hat{y}_h)_{h>0}$ with \EEE $\hat{y}_h \in W^{1,2}(\OOO\Omega\setminus \tilde V_h\EEE; \R^3)$, by $\hat{y}_h :=\MMM T_h(\mathrm{id}) \EEE $ on $\tilde{V}_h$ and
$$\hat{y}_h(x) :=  \sigma_h^{-1} y_h ( \sigma_h x) \qquad \text{on $\Omega \setminus \tilde{V}_h$}, $$
where $y_h$ is given in \eqref{yh12}. By \eqref{dila} this is well defined. 
From \eqref{yh12} and \eqref{h3}(iv) we obtain $\Vert y_h \Vert_{L^\infty(\Omega)} \le M + Dh$, where $D:= \Vert d \Vert_\infty +   \Vert b_\eta \Vert_{\infty}$.  Recalling  the choice \eqref{def:sigma_h} \EEE
and choosing further
\begin{align}\label{Lambda-ref}
	\Lambda \ge \frac{2C\cdot D}{M}\,, 
\end{align}
which clearly only depends on $\eta$, we find $\Vert \hat{y}_h \Vert_{L^\infty(\Omega)} \le M$. This shows \eqref{eq: toshow2}(iv). As $\sigma_h \to 1$ for $h \to 0$, we easily get that also \eqref{eq: toshow2}(i)--(iii) are satisfied. This concludes the proof. 
\end{step}

\appendix

\section{Proofs of Proposition \ref{summary_estimates_proposition} and Corollary  \ref{difference_rigid-motions}}\label{sec: aux estimates}
 \begin{proof}[Proof of Proposition \ref{summary_estimates_proposition}] 
We recall once again that  by $C>0$ we denote generic constants which are independent of $h, \rho$. We fix $i \in I^h_{\rm g}$.
Let
\begin{equation*}
\mathcal P_{i,h}:=  \Big\{(P^j_{i,h})_j  \text{ \OOO the \EEE connected components of } \hat Q_{h,\rho}(i)\setminus   \partial E^*_h \Big\}\,,
\end{equation*}
with the enumeration being such that $\L^3(P^1_{i,h})$ is always maximal. We can use the maximality of $P^1_{i,h}$ in terms of its volume, the relative isoperimetric inequality, \OOO and \eqref{good_cuboids} \EEE to estimate
	
\begin{align}\label{eq:volume_of_maximal}
\begin{split}	
\mathcal{L}^3(\hat Q_{h,\rho}(i)\setminus P^1_{i,h})&=\sum_{j\geq 2}\mathcal{L}^3(P^j_{i,h})\leq c_{\OOO\rm{isop}\EEE}\sum_{j\geq 2}[\mathcal{H}^2(\partial P^j_{i,h}\OOO\cap \hat Q_{h,\rho}(i)\EEE)]^{3/2}\\
&\leq c_{\OOO\rm{isop}\EEE}\alpha^{1/2}h\sum_{j\geq 2}\mathcal{H}^2(\partial \EEE P^j_{i,h}\OOO\cap \hat Q_{h,\rho}(i)\EEE)\leq Ch\mathcal{H}^2\big(\partial E_h^*\cap \hat Q_h(i)\big)\,,		
\end{split}
\end{align}
for $C:=2c_{\OOO\mathrm{isop}\EEE}\alpha^{1/2}$. 
Furthermore, by \OOO \eqref{eq:Q_h}, \eqref{eq:Q_h_rho}, \EEE\eqref{good_cuboids}, \eqref{eq:volume_of_maximal}, and the choice of $\alpha$ in \eqref{eq: T2} and $\rho$ in \eqref{eq:choice_of_rho},
\begin{align}\label{eq: first on domi} 
\L^3(\hat Q_{h}(i)\setminus P^1_{i,h})  \le \L^3(\hat Q_{h}(i) \setminus \hat Q_{h,\rho}(i))+\OOO 2\EEE c_{\OOO\rm isop\EEE}\alpha^{3/2}h^3 \le \frac{9h^3 }{128} + \frac{9h^3}{128}  =  \frac{1}{64}  \mathcal{L}^3(\hat Q_{h}(i))\,.
\end{align}
We now distinguish between the two cases, \MMM namely \EEE
$${\rm (a)} \ \ P^1_{i,h} \subset E_h^*\,, \qquad {\rm (b)} \ \ P^1_{i,h} \cap  E_h^* = \emptyset\,. $$
	
\emph{Case ${\rm(a)}$}: If $P^1_{i,h} \subset E_h^*$, we just define  $D_{i,h} := P^1_{i,h}$,  $R_{i,h}:= {\rm Id}$, $b_{i,h}:= 0$, and $z_{i,h}\in W^{1,2}(\hat Q_{h,\rho}(i);\R^3)$ by $z_{i,h}:= {\rm id}$. Then  \eqref{big_volume_of_dominant_set} holds by \eqref{eq:volume_of_maximal} and \eqref{eq: first on domi}, while \eqref{big_surface_of_dominant_set}, \eqref{L2_gradient_estimates}, and \eqref{properties_of_Sobolev_replacement} are trivially satisfied, \OOO recalling \EEE\eqref{eq: *mod}. 
	
\medskip
	
\emph{Case ${\rm(b)}$}: Suppose now \MMM that \EEE  $P^1_{i,h}\cap  E_h^* = \emptyset$. Then,  we apply Theorem \ref{prop:rigidity} to the map $v^*_h$ for $\rho>0$, $\gamma :={\kappa}_h/h^2$, and $\eta_h\to 0$ satisfying \eqref{parameters_for_uniform_bounds}. Property \eqref{eq: main rigitity}  provides a rotation $R^1_{i,h} \in SO(3)$ and $b_{i,h}^1 \in \R^3$ such that 
\begin{align*}
\hspace{-1em}\begin{split}
{{\rm (i)}} & \ \  \int_{P^1_{i,h}}\big|{\rm sym}\big((R^1_{i,h})^T \nabla v^*_h-\mathrm{Id}\big)\big|^2\,\mathrm{d}x 
\leq C\big(1 +  C_{\eta_h} (h^{-2} {\kappa}_h)^{-15/2}h^{-3}\eps_{i,h}\big)\eps_{i,h}\,,
\\
{{\rm (ii)}} & \ \     h^{-2}\int_{P^1_{i,h}}|v^*_h-(R^1_{i,h}x+b^1_{i,h})|^2\,\mathrm{d}x+\int_{P^1_{i,h}}\big|(R^1_{i,h})^T \nabla  v^*_h-\mathrm{Id}\big|^2\,\mathrm{d}x
\leq  C_{\eta_h}(h^{-2}{\kappa}_h)^{\AAA-5\EEE}\eps_{i,h}\,,
\end{split}
\end{align*}
where we recall \eqref{eq: localized_elastic} and we have set $\gamma ={\kappa}_h/h^2 $. Our choice of $(\eta_h)_{h >0}$ in \eqref{parameters_for_uniform_bounds}, the definition in \eqref{good_cuboids}, and \eqref{rate_1_gamma_h} ensure that, for $h>0$ small enough depending on $\rho$,
\begin{align}\label{eq: main rigidity}
\hspace{-1em}\begin{split}
{{\rm (i)}} & \ \    \int_{P^1_{i,h}}\big|{\rm sym}\big((R^1_{i,h})^T \nabla v^*_h-\mathrm{Id}\big)\big|^2\,\mathrm{d}x 
\leq C_0\eps_{i,h}\,,
\\
{{\rm (ii)}} & \ \   h^{-2}\int_{P^1_{i,h}}|v^*_h-(R^1_{i,h}x+b^1_{i,h})|^2\,\mathrm{d}x
+\int_{P^1_{i,h}}\big|(R^1_{i,h})^T \nabla  v^*_h-\mathrm{Id}\big|^2\,\mathrm{d}x \leq C_0 h^{-2/5} \eps_{i,h}\,,
\end{split}
\end{align}
for a universal constant $C_0>0$. It is also easy to verify that 
\begin{align}\label{eq: bbb}
|b^1_{i,h}| \le  CM 
\end{align}  
for a universal constant $C>0$ that is independent of $h>0$. Indeed, since $\|v_h\|_{L^\infty(\Omega_h)} \leq M$ for $M \ge 1$, using the triangle inequality we obtain
\begin{align}\label{eq: argu1}
\L^3(P^1_{i,h})|b_{i,h}^{1}|^2 \le  C \int_{P^1_{i,h}}|v_{h}^*(x)-  (R^1_{i,h}x+b^1_{i,h}) |^2\OOO + \EEE C\L^3(P^1_{i,h}) \big( \|v_h\|_{L^\infty(\Omega_h)}^2 + ({\rm diam}(\Omega_h))^2   \big)\,.  
\end{align}
Using further \eqref{eq: first on domi},  \eqref{eq: main rigidity}(ii), and   \eqref{good_cuboids}, we get 
\begin{align}\label{eq: argu2}
|b_{i,h}^{1}|^2 \le Ch^{-3} h^2  h^{-2/5} \eps_{i,h}  +   C(M^2 + C) \le C(M^2 + C)\,, 
\end{align}
hence $|b_{i,h}^{1}| \le CM$. 
	
We now show that we can use Theorem \ref{th: kornSBDsmall} to obtain a Sobolev function satisfying \eqref{properties_of_Sobolev_replacement}. Introducing the function  $u_{i,h}\in SBV^2(\hat Q_{h,\rho}(i);\R^3)$ by
\begin{equation}\label{rotated_sbv_displacements}
u_{i,h}(x):=  \chi_{P^1_{i,h}}(x)\big[(R^1_{i,h})^T v_h^*(x)-x-(R^1_{i,h})^T b^1_{i,h}\big]\,,
\end{equation}
we observe that  $J_{u_{i,h}}\subset \partial E^*_h\cap \hat Q_{h,\rho}(i)$.    Now,  \eqref{rotated_sbv_displacements}, \eqref{eq: main rigidity}, and \eqref{good_cuboids}  imply that 
\begin{align}\label{cor_main rigidity}
\begin{split}
{\rm{(i)}} & \ \ \int_{\hat Q_{h,\rho}(i)}|{\rm sym}(\nabla u_{i,h})|^2\,\mathrm{d}x 
\leq C\eps_{i,h}\,,
\\
{\rm{(ii)}} & \ \  h^{-2}\int_{\hat Q_{h,\rho}(i)} |u_{i,h}|^2\,\mathrm{d}x+\int_{\hat Q_{h,\rho}(i)} |\nabla u_{i,h}|^2\,\mathrm{d}x \leq C\eps_{i,h}^{9/10}\,.
\end{split}
\end{align}
Applying Theorem \ref{th: kornSBDsmall} to the map $u_{i,h}$, in view of Remark \ref{eq:easy_rem} and \eqref{good_cuboids}, we obtain a set of finite perimeter $\omega_{i,h}\subset \hat Q_{h,\rho}(i)$ that satisfies
\begin{equation}\label{eq:set_of_finite_perimeter_control_area_volume}
\H^2(\partial^*\omega_{i,h})\leq c_{\rm{KP}} \mathcal{H}^2(J_{u_{i,h}}) \le  c_{\rm{KP}}\mathcal{H}^2(\partial E^*_h\cap \hat Q_{h,\rho}(i))\,,		
\end{equation}
and 
\begin{align}\label{set_of_finite_perimeter_control_area_volume}
\begin{split}	
\L^3(\omega_{i,h})&\leq   c_{\rm{KP}} \big(\mathcal{H}^2(J_{u_{i,h}}) \big)^{3/2} \le  c_{\rm{KP}}\big(\mathcal{H}^2(\partial E^*_h\cap \hat Q_{h,\rho}(i))\big)^{3/2}\\
&\leq  c_{\rm{KP}}\alpha^{1/2}h\mathcal{H}^2(\partial E^*_h\cap \hat Q_{h,\rho}(i))\le  c_{\rm{KP}}\alpha^{3/2}h^3 \le \frac{9}{128} h^3 <  \frac{1}{64}  \L^3(\hat Q_h(i))\,,
\end{split}
\end{align}
where we \OOO used again \EEE \eqref{eq: T2}. \OOO Theorem \EEE\ref{th: kornSBDsmall} provides also a Sobolev function $\zeta_{i,h}\in W^{1,2}(\hat Q_{h,\rho}(i);\R^3)$  such that  
\begin{align}\label{properties_of_Sobolev_replacementXXX}
\begin{split}
{\rm (i)}&\quad \zeta_{i,h}\equiv  u_{i,h} \quad \text{on }\ \hat Q_{h,\rho}(i) \setminus  \omega_{i,h}\,,\\
{\rm (ii)} & \quad \Vert {\rm sym}(\nabla  \zeta_{i,h}) \Vert_{L^2(\hat Q_{h,\rho}(i) ) } \le c_{\rm{KP}} \Vert {\rm sym}(\nabla  u_{i,h}) \Vert_{L^2(\hat Q_{h,\rho}(i) ) }\,,\\
{\rm (iii)} & \quad \Vert \zeta_{i,h} \Vert_\infty \le \Vert  u_{i,h}  \Vert_\infty\le CM\EEE\,,
\end{split}
\end{align}
where the final estimate in \OOO\eqref{properties_of_Sobolev_replacementXXX}\EEE$\rm{(iii)}$ follows from $\Vert v_h^* \Vert_\infty \le M$, \eqref{eq: bbb}, and the definition of $u_{i,h}$ in \eqref{rotated_sbv_displacements}.
	
We then define as dominant component the set
\begin{align}\label{eq: last dom def}
D_{i,h}:= P^1_{i,h} \setminus \omega_{i,h}\,,
\end{align}
so that by \eqref{eq:volume_of_maximal}, \eqref{eq: first on domi},   and \eqref{set_of_finite_perimeter_control_area_volume} we indeed verify that  \eqref{big_volume_of_dominant_set} holds. The estimate \eqref{big_surface_of_dominant_set} follows directly from the definition \eqref{eq: last dom def}, the fact that $\partial P^1_{i,h}\OOO\cap \hat Q_{h,\rho}(i)\EEE \subset \partial E_h^*\cap \hat Q_{h,\rho}(i)$, and \eqref{eq:set_of_finite_perimeter_control_area_volume}.
	
Applying the classical Korn's inequality  in $W^{1,2}$, we find $A_{i,h}\in \R^{3\times 3}_{\mathrm{skew}}$ such  that 
\begin{equation}\label{Korn_Poincare_inequality_first}
\int_{\hat Q_{h,\rho}(i)}|\nabla \zeta_{i,h}-A_{i,h}|^2 \,\mathrm{d}x
\leq C_{\rm{KP}}\int_{\hat Q_{h,\rho}(i)}|\mathrm{sym}(\nabla u_{i,h})|^2 \,\mathrm{d}x \le CC_{\rm{KP}} \eps_{i,h}
\end{equation}
for a universal $C_{\rm{KP}}>0$, where we used \eqref{properties_of_Sobolev_replacementXXX}(ii) and \eqref{cor_main rigidity}(i). We \MMM now \EEE  set  
\begin{equation}\label{eq:z_definition}
z_{i,h}:= R^1_{i,h} \zeta_{i,h} + R^1_{i,h}{\rm id} + b_{i,h}^1 \in  W^{1,2}(\hat Q_{h,\rho}(i);\R^3)\,,
\end{equation}
and observe that by \eqref{rotated_sbv_displacements}, \eqref{properties_of_Sobolev_replacementXXX}(i), and \eqref{eq: last dom def} it holds that $z_{i,h} \equiv v^*_h$ on $D_{i,h}$, which in particular implies \eqref{properties_of_Sobolev_replacement}(i).  Moreover, \eqref{properties_of_Sobolev_replacement}(iii) follows from \eqref{properties_of_Sobolev_replacementXXX}(iii) and \eqref{eq: bbb}.
	
It remains to show \eqref{L2_gradient_estimates} and \eqref{properties_of_Sobolev_replacement}(ii). In view of \eqref{Korn_Poincare_inequality_first} and \eqref{eq:z_definition}, we have
\begin{equation}\label{ineq_for_skew_part}
\int_{\hat Q_{h,\rho}(i)}|\nabla z_{i,h}-R^1_{i,h}(\mathrm{Id}+A_{i,h})|^2 \,\mathrm{d}x\leq C\eps_{i,h}\,.
\end{equation} 
Next, we substitute $R^1_{i,h}(\mathrm{Id}+A_{i,h})$ in \eqref{ineq_for_skew_part} by a suitable rotation. For this purpose, we prove that there exists $R_{i,h}\in SO(3)$ such that
\begin{equation}\label{optimal_rotation_i}
\L^3(\hat Q_{h,\rho}(i))|R^1_{i,h}(\mathrm{Id}+A_{i,h})-R_{i,h}|^2\leq C \eps_{i,h}\,.
\end{equation}
Indeed, by \eqref{big_volume_of_dominant_set}  together with \eqref{properties_of_Sobolev_replacementXXX}(i), \eqref{eq: last dom def},  \eqref{cor_main rigidity}(ii), and \eqref{Korn_Poincare_inequality_first}, we get 
\begin{align*}
\MMM h^3 \EEE |A_{i,h}|^2&\leq\L^3(D_{i,h})| A_{i,h}|^2= \int_{D_{i,h}}\big|\nabla u_{i,h}+ A_{i,h}-\nabla \zeta_{i,h}\big|^2\, \mathrm{d}x\\
&\leq 2\Big(\int_{D_{i,h}}|\nabla u_{i,h}|^2 \,\mathrm{d}x+\int_{D_{i,h}}|\nabla \zeta_{i,h} - A_{i,h}|^2\,\mathrm{d}x\Big)\leq C(\eps^{9/10}_{i,h}+\eps_{i,h})\,.
\end{align*}
Using that $\eps_{i,h} \le h^4$, see \eqref{good_cuboids}, \MMM  we \EEE obtain  
\begin{equation*}
|A_{i,h}|^2\leq Ch^{-3}\eps_{i,h}^{9/10} \le Ch^{-7/5}\eps_{i,h}^{1/2}\,.
\end{equation*}
A standard Taylor expansion, \OOO cf.~\EEE\cite[Equation (33)]{friesecke2002theorem}, gives
$${\rm dist}(G,SO(3)) = |{\rm sym}(G) - {\rm Id}| + {\rm O}(|G - {\rm Id}|^2)\,,$$
which further yields 
\begin{align*}
\mathrm{dist}^2\big(\OOO\mathrm{Id}+A_{i,h}\EEE,SO(3)\big) \leq C|A_{i,h}|^4\leq Ch^{-14/5}\eps_{i,h}\,,
\end{align*}
i.e., there exists $R_{i,h}\in SO(3)$ such that
\begin{equation*}
|R^1_{i,h}(\mathrm{Id}+A_{i,h})- R_{i,h}|^2  \leq Ch^{1/5}h^{-3}\eps_{i,h}\leq Ch^{-3}\eps_{i,h}  \le C (\L^3(\hat Q_{h,\rho}(i)))^{-1}  \eps_{i,h}  \,.
\end{equation*}
This proves \eqref{optimal_rotation_i} \OOO which, combined with \eqref{ineq_for_skew_part}, gives \EEE 
\begin{equation*}
\int_{\hat Q_{h,\rho}(i)}|\nabla z_{i,h}- R_{i,h}|^2\,\mathrm{d}x\leq C\eps_{i,h}\,.
\end{equation*}
\MMM This \EEE yields the second part of \eqref{properties_of_Sobolev_replacement}(ii). Applying the Poincar\'e  inequality on $W^{1,2}(\hat Q_{h,\rho}(i);\R^3)$ we obtain a vector $b_{i,h} \in \R^3$ such that the rigid motion 
$r_{i,h}(x):=  R_{i,h}x+ b_{i,h}$ satisfies
\begin{equation*}
h^{-2} \int_{\hat Q_{h,\rho}(i)}|z_{i,h}(x)- r_{i,h}(x)|^2\,\mathrm{d}x\leq C\eps_{i,h}\,,
\end{equation*}
concluding the proof of  \eqref{properties_of_Sobolev_replacement}(ii). Moreover, \eqref{L2_gradient_estimates} is an immediate  consequence of \eqref{properties_of_Sobolev_replacement}(i),(ii). Finally, by repeating verbatim  the argument in \eqref{eq: argu1}--\eqref{eq: argu2} with $b_{i,h}$ in place of $b_{i,h}^1$, we also obtain that $|b_{i,h}| \le CM$. This concludes the proof.     
\end{proof}

\begin{remark}\label{rem:choice_h_4}
\normalfont
Note  again  that the indices considered  in $I^h_{ \rm{g}}$ are related to cuboids for which $\eps_{i,h}\leq h^4$. In \cite[\OOO Equation \EEE (3.47)]{KFZ:2023} this additional requirement was not necessary since the global elastic energy scaling was $h^4$. In contrast, in the present setting,  the global elastic energy scaling  is $h^3$, and an additional control is needed for the following reason: Since the proof of Proposition \ref{summary_estimates_proposition}  relies on applying Theorem \ref{prop:rigidity} on $\hat Q_{h,\rho}(i)$, in order to ensure that the constant in \eqref{eq: main rigitity}{\OOO\rm(i)\EEE} can be chosen independently of $h>0$, it is essential that $\eps_{i,h}\ll h^3$. Thus, using the global energy bound $\eps_{i,h}\leq h^3$ would not be sufficient for this purpose. Yet, considering any bound of the form $\eps_{i,h}\leq h^{3+\OOO\mu\EEE}$, for $\OOO\mu\EEE>0$, would be sufficient, up to adjusting the curvature regularization parameter $\kappa_h$ in \eqref{rate_1_gamma_h}.

The choice $\OOO\mu\EEE=1$ is canonical, since the cardinality of indices $i$ such that $\eps_{i,h}>h^4$ is of the same order (namely $h^{-1}$) as the one of the indices $i'$ for which $ \H^2(\partial E^*_h\cap  \hat Q_{h,\rho}(i'))> \alpha h^2$, i.e., for which the surface area condition in the definition \eqref{good_cuboids} is violated.  \EEE 	
\end{remark}

\begin{proof}[Proof of Corollary \ref{difference_rigid-motions}]
By \eqref{L2_gradient_estimates} and the triangle inequality we can estimate 
\begin{align}\label{eq: the diff}
\int_{D_{i,h} \cap D_{i',h}}\big|r_{i,h}-r_{i',h}\big|^2\,\mathrm{d}x &\le 2 \int_{D_{i,h}}\big|v^*_h(x)-r_{i,h}(x)\big|^2\,\mathrm{d}x + 2\int_{D_{i',h}}\big|v^*_h(x)-r_{i',h}(x)\big|^2\,\mathrm{d}x \notag \\ 
& \le Ch^2(\eps_{i,h}+ \eps_{i',h})\,.
\end{align}
Note that $\mathcal{L}^3(\hat Q_{h}(i)  \cap \hat Q_{h}(i')) >\frac{9}{8} h^3$   and  $\mathcal{L}^3( \hat Q_{h}(j)  \setminus  D_{j,h}  ) \le \frac{9}{32} h^3$  by \eqref{big_volume_of_dominant_set} for $j=i,i'$. This yields 
$$\mathcal{L}^3(D_{i,h} \cap D_{i',h}) \ge \mathcal{L}^3(\hat Q_{h}(i)  \cap \hat Q_{h}(i')) -\mathcal{L}^3(\hat Q_{h}(i)    \setminus  D_{i,h}  ) -\mathcal{L}^3( Q_{h}(i') \setminus  D_{i',h}  ) \ge  \frac{9}{16}h^3\,.$$
Moreover, $\hat Q_{h}(i)  \cup \hat Q_{h}(i')$ is contained in a ball of radius $r = ch$ for a universal constant $c>0$. This along with \eqref{eq: the diff} and Lemma \ref{lemma: rigid motions} shows  the first part of \eqref{general_differences_rotations2}. The estimate for $|R_{i,h}-R_{i',h}|^2$ therein follows exactly in the same fashion, using again \eqref{L2_gradient_estimates}. 
\end{proof} 

\section{A linearization argument for the elastic energy liminf.}\label{sec: standard_liminf}
We \OOO detail \EEE the by now classical linearization argument to obtain \eqref{aux_lower_bound}. 
Let $(\lambda_h)_{h>0}\subset (0,\infty)$ be such that \begin{equation}\label{lambda_h_sequence}
	\lambda_h\to \infty\,,\  h\lambda_h  \to 0 \ \ \text{as } h\to 0\,,
\end{equation}
and  define
\begin{equation}\label{set_of_big_gradient}
\Theta_{h,\rho}:= \Omega_{1,\rho}\cap \lbrace \tilde{w}_h= y_h\rbrace \cap \{|G_h|\leq \lambda_h\}\,.
\end{equation}
Note that $\mathcal{L}^3(\lbrace \tilde{w}_h \neq y_h \rbrace\OOO\cap \Omega_{1,\rho}\EEE) \to 0$  by \eqref{eq:w_h_rescaling}, \eqref{from_v_to_y}, \eqref{w_h_almost_the same}(i), and a scaling argument. Combining this with the fact that  $\sup_{h>0}\Vert  G_h  \Vert_{L^2(\Omega_{1,\rho})} \le C$, see \eqref{eq:rescaled_fields_convergence}, $\lambda_h \to + \infty$, and Chebyshev's inequality, we obtain 
\begin{align}\label{eq: cheby}
	\mathcal{L}^3(\Omega_{1,\rho} \setminus \Theta_{h,\rho}) \to 0 \quad \text{as $h\to 0$}\,, 
\end{align}
i.e., $\chi_{\Theta_{\OOO h,\rho\EEE}}\to 1$ boundedly in measure \OOO on \EEE $\Omega_{1,\rho}$ as $h\to 0$. By   \eqref{admissible_configurations_h_level}, $W(\mathrm{Id})=0$, $W \ge 0$, and the definition of $\Theta_{\OOO h,\rho\EEE}$, we get 
\begin{align*}
	\liminf_{h \to 0} \Big(h^{-2}\int_{\Omega \setminus \overline{V_h}} W(\nabla_hy_h)\, {\rm d}x\Big) & = \liminf_{h \to 0} \Big(h^{-2}\int_{\Omega} W(\nabla_hy_h)\, {\rm d}x\Big)  \\
	& \ge  \liminf_{h \to 0} \Big(h^{-2}\int_{\Omega_{1,\rho}} \chi_{\Theta_{h,\rho}} W(\nabla_h \tilde{w}_h)\, {\rm d}x\Big)\,.
\end{align*}
The regularity and the structural hypotheses on $W$, recall \eqref{eq: nonlinear energy}, imply that
$$W({\rm Id}+F) = \tfrac{1}{2}\mathcal{Q}_3(F) + \Phi(F)\,,$$
where $\Phi\colon\R^{3 \times 3}\to  \R $ satisfies 
\begin{equation}\label{PPPhi}
\sup \big\{ \tfrac{|\Phi(F)|}{|F|^2} \colon \, |F| \le \sigma \big\} \to 0 \ \ \text{ as $\sigma \to 0$\,.}
\end{equation} 
Together with the definition of $G_h$ in  \eqref{eq:rescaled_fields_convergence}, we obtain
\begin{align}\label{liminf_first_inequality}
\liminf_{h \to 0} \Big(h^{-2}\int_{\Omega\setminus \overline{V_h}} W(\nabla_hy_h)\, {\rm d}x\Big) & \ge \liminf_{h\to 0}  \Big(h^{-2}\int_{\Omega_{1,\rho}} \chi_{\Theta_{h,\rho}} W({\rm Id} + hG_h) \, {\rm d}x\Big) \nonumber
	\\
	&\ge   \liminf_{h\to 0} \int_{\Omega_{1,\rho}} \chi_{\Theta_{h,\rho}}\Big( \tfrac{1}{2}\Q_3(G_h) +h^{-2} \Phi(hG_h)  \Big) \, {\rm d}x \nonumber
	\\
	&= \liminf_{h\to 0} \frac{1}{2}\int_{\Omega_{1,\rho}} \chi_{\Theta_{h,\rho}} \Q_3(G_h) \,{\rm d}x\,.
\end{align}
In the passage to the last line above, we made use of the fact that 
$${\limsup_{h\to 0}  \int_{\Omega_{1,\rho}} \chi_{\Theta_{h,\rho}} h^{-2} |\Phi(hG_h)|  \, {\rm d}x \le \limsup_{h \to 0}\left( \sup\Big\{ \tfrac{|\Phi(hG_h)|}{|h G_h|^2} \colon \, |hG_h| \le  h\lambda_h  \Big\}   \int_{\Omega_{1,\rho}} \chi_{\Theta_{h,\rho}} |G_h|^2 \, {\rm d}x\right)= 0\,,    }$$
which follows from the fact that $(G_h)_{h>0}$ is bounded in $L^2(\Omega_{1,\rho};\R^{3 \times 3})$,  \eqref{set_of_big_gradient}, \eqref{PPPhi}, and   $h\lambda_h\to 0$\,, \OOO cf.~\eqref{lambda_h_sequence}\EEE.  Hence, \OOO \eqref{liminf_first_inequality}, \EEE\eqref{eq:rescaled_fields_convergence}, the fact that $\chi_{\OOO \Theta_{h,\rho}\EEE}\to 1$ boundedly in measure in $\Omega_{1,\rho}$, see \eqref{eq: cheby}, and the convexity of $\Q_3$ imply that 
\begin{equation*}
\OOO\liminf_{h \to 0} \Big(h^{-2}\int_{\Omega\setminus \overline{V_h}} W(\nabla_hy_h)\, {\rm d}x\Big)\EEE	\geq \frac{1}{2}\int_{\Omega_{1,\rho}} \Q_3(G)\,{\rm d}x\geq \frac{1}{2}\int_{\Omega_{1,\rho}}\Q_2\big(G'\big)\,\mathrm{d}x\,,\\
\end{equation*}
\OOO which is exactly \eqref{aux_lower_bound}\,. \EEE
\medskip

\section*{Acknowledgements} 
\MMM MF was supported by the RTG 2339 “Interfaces, Complex Structures, and Singular Limits”
of the German Science Foundation (DFG). \EEE The research of LK was supported by the DFG through the Emmy Noether Programme (project number 509436910). KZ was supported by the Sonderforschungsbereich 1060 and the Hausdorff Center for Mathematics (HCM) under Germany's Excellence Strategy -EXC-2047/1-390685813.

\typeout{References}

 \end{document}